\def\XXint#1#2#3{{\setbox0=\hbox{$#1{#2#3}{\int}$ }
\vcenter{\hbox{$#2#3$ }}\kern-.6\wd0}}
\newcommand{\RN}[1]{%
  \textup{\uppercase\expandafter{\romannumeral#1}}%
} 
\newcommand{\abs}[1]{\left|#1\right|}
\newcommand{\R}{\mathbb{R}}
\newcommand{\Ga}{\Gamma}   
\newcommand{\de}{\delta}
\newcommand{\bke}[1]{\left ( #1 \right )}
\newcommand{\bket}[1]{\left \{ #1 \right \}}
\newcommand{\calC}{{\mathcal C}}
\newcommand\Si{\Sigma}
\newcommand{\pd}{\partial}
\newcommand{\norm}[1]{\left\lVert#1\right\rVert}
\newsavebox\myboxA
\newsavebox\myboxB
\newlength\mylenA
\newcommand*\xoverline[2][0.75]{%
    \sbox{\myboxA}{$\math#2$}%
    \setbox\myboxB\null
    \ht\myboxB=\ht\myboxA%
    \dp\myboxB=\dp\myboxA%
    \wd\myboxB=#1\wd\myboxA
    \sbox\myboxB{$\math\overline{\copy\myboxB}$}
    \setlength\mylenA{\the\wd\myboxA}
    \addtolength\mylenA{-\the\wd\myboxB}%
    \ifdim\wd\myboxB<\wd\myboxA%
       \rlap{\hskip 0.5\mylenA\usebox\myboxB}{\usebox\myboxA}%
    \else
        \hskip -0.5\mylenA\rlap{\usebox\myboxA}{\hskip 0.5\mylenA\usebox\myboxB}%
    \fi}
\newtheorem{theorem}{Theorem}
\newtheorem{lem}{Lemma}
\newtheorem*{lem*}{Lemma}
\newtheorem{prop}{Proposition}
\theoremstyle{definition}
\numberwithin{equation}{section}
\newcommand{\innerthmname}{}
\newenvironment{statement}[1]
 {\renewcommand{\innerthmname}{#1}\innerthm}
 {\endinnerthm}
\theoremstyle{definition}
\theoremstyle{remark}
\newtheorem*{remark}{Remark}
\title[Local and global regularity near boundary]{Local and global regularity for the Stokes and Navier-Stokes equations with the localized boundary data in the half-space}
\author{Kyungkeun Kang and Chanhong Min}
\begin{document}
\nolinenumbers
\maketitle

\begin{center}
Dedicated to Professor Vladim\'ir \u{S}ver\'ak on the occasion of his 65th birthday.
\end{center}
  

\begin{abstract}
We study the Stokes system with the localized boundary data in the half-space. We are concerned with the local regularity of its solution near the boundary away from the support of the given boundary data which are product forms of each spatial variable and the temporal variable.
We first show that if the boundary data are smooth in time, the corresponding solutions are also smooth in space and time near the boundary, even if the boundary data are only spatially integrable.
Secondly, if the normal component of the boundary data is absent, we are able to construct a solution such that its second normal derivatives of the tangential components become singular near the boundary. 
Perturbation argument enables us to construct solutions of the Navier-Stokes equations with similar singular behaviors near the boundary in the half-space as the case of Stokes system.
Lastly, we provide specific types of the localized boundary data to obtain the pointwise bounds of the solutions up to second derivatives. It turns out that such solutions are globally strong, and the second normal derivatives are, however, unbounded near the boundary. 
These results can be compared to the previous works in which only the normal component is present. In fact, the temporally non-smooth tangential boundary data can also cause spatially singular behaviors near the boundary, although such behaviors are milder than those caused by the normal boundary data of the same type.
\end{abstract}

  
\section{Introduction}
We consider the non-stationary Stokes system in the half-space:
\begin{equation}\label{stokeseq}
\begin{dcases}
\begin{aligned}
  \partial_t u-\Delta u+\nabla p&= 0\\
  \text{div } u&= 0
\end{aligned}
\end{dcases}
\quad\text{  in } \,\, \mathbb{R}_{+}^n \times (0,\infty),
\end{equation}
with zero initial data and non-zero boundary data:
\begin{equation}\label{stokesbdry}
    u\mid_{t=0}=0,\qquad u\mid_{x_n=0}=g.
\end{equation}
Here the boundary data \(g : \mathbb{R}^{n-1}\times \mathbb{R}_{+}\rightarrow \mathbb{R}^n\) are separable forms for the spatial and temporal variables, which are given by
\begin{equation}\label{bdry}
    g(y',s)=(g_1(y',s),\, g_2(y',s), \,\cdots, \,g_n(y',s))=\left(a_1g_1^{\mathcal{S}}(y')g_1^{\mathcal{T}}(s), \,a_2g_2^{\mathcal{S}}(y')g_2^{\mathcal{T}}(s), \,\cdots, \,a_ng_n^{\mathcal{S}}(y')g_n^{\mathcal{T}}(s)\right),
\end{equation}
where
\(a_i\in\mathbb{R}\), \(g_i^{\mathcal{S}}\geq0\) and \(g_i^{\mathcal{T}}\geq 0\) for \(i=1\), \(2\), \(\cdots\),  \(n\).
Our main concern is the local regularity of the solution near the boundary. 
Assuming that the boundary conditions are localized, we come up with the following simple situation away from the support of the boundary data:
\begin{equation}\label{SS-10}
\partial_t u-\Delta u+\nabla p=0, \quad
 {\rm div}\, u=0\qquad \mbox{in}\ \ B_{1}^+ \times (0, \infty)
\end{equation}
 together with the no-slip boundary condition given only on the flat part of the boundary, namely
\begin{equation}\label{SS-20}
u=0\qquad\mbox{ on }\,\,\Lambda'\times (0, \infty).
\end{equation}
Here we denote $B^+_1:=\bket{x\in\R^n: \abs{x}<1, x_n>0}$ (via translation, the center of the half ball can be assumed to be the origin) and $\Lambda':=B_1\cap\bket{x_n=0}$.
We emphasize that no condition is imposed on the rounded boundary, denoted by $\Lambda^+:=\partial B_1\cap\bket{x_n\ge 0}$.
One can imagine a similar situation to compare to the heat equation, i.e. $u_t-\Delta u=0$ for which the classical boundary regularity theory implies that
\begin{equation}\label{HE-30}
\norm{D^l_xD^m_t u}_{L^{\infty}(Q^+_{r})}\le C_{l, m, r, n}\norm{u}_{L^2(Q^+_1)},\qquad l, ~m\ge 0,
\end{equation}
where $Q_r^+=B^+_r\times(t-r^2, t)\subset B_1^+ \times (0, \infty)$ and \(r<1\).
Due to the non-local effect of the Stokes system \eqref{SS-10}-\eqref{SS-20}, such estimate \eqref{HE-30}  is, however, not clear and, as a matter of fact, it turned out that the estimate \eqref{HE-30}, in general, isn't true for the Stokes system (see \cite{Kang05}, \cite{CKca}, \cite{KangTsai22FE} and \cite{KangChang22}).

Indeed, in \cite{Kang05}, the first author constructed in three dimensions the examples of solutions such that their gradients are singular near the boundary, although the solutions themselves are bounded.
More specifically,
there exist solutions of the Stokes system \eqref{SS-10}-\eqref{SS-20} such that 
they are bounded and their derivatives are square integrable, but their normal derivatives are unbounded, i.e.,
\[
 \norm{u}_{L^{\infty}_{x,t}(Q_1^+)}+\norm{\nabla u}_{L^2_{x,t}(Q_1^+)}<\infty, \qquad\norm{\partial_{x_3} u}_{L^{\infty}_{x,t}(Q_{\frac{1}{2}}^+)}=\infty.
\]
Such results recently became refined in the sense that the 
boundary data causing a wider range of singularity are analyzed and the corresponding blow-up rates are calculated near the boundary in general dimensions, and furthermore, a construction of a solution to the Navier-Stokes equations is also made to show similar singular behaviors (see \cite{KangTsai22FE} for the details).
We remark that such constructed solution is an analog near the boundary to the well-known Serrin's example, namely $u(x,t)=\nabla h(x,t)$, where $h$ is harmonic in spatial variables, in the interior. We remark, however, that no non-trivial solution of the form $u(x,t)=\nabla h(x,t)$ near the boundary exists because of the homogeneous boundary condition \eqref{SS-20}.
It is worth referring to the a priori estimate  for \eqref{SS-10}-\eqref{SS-20} proved in \cite[Proposition 2]{Seregin00}, which is given as follows: For given $p,\, q\in (1,2]$ and any $r$ with $p\le r<\infty$, it holds that
\begin{equation}\label{Seregin-50}
\norm{\partial_t u}_{L^{q}_t L^r_{x}(Q_{\frac{1}{2}}^+)}+\norm{\nabla^2 u}_{L^{q}_t L^r_{x}(Q_{\frac{1}{2}}^+)}+\norm{\nabla \pi}_{L^{q}_t L^r_{x}(Q_{\frac{1}{2}}^+)}
\le C_{p,q,r}\bke{\norm{ u}_{L^{q}_t W^{1,p}_{x}(Q_{1}^+)}+\norm{\pi}_{L^{q}_t L^p_{x}(Q_{1}^+)}}.
\end{equation}
Therefore, it follows from the parabolic embedding that
\begin{equation}\label{Seregin-100}
\norm{u}_{\mathcal{C}^{\frac{\alpha}{2}}_t \mathcal{C}^{\alpha}_{x}(Q_{\frac{1}{2}}^+)}
< \infty,\qquad  0< \alpha <2\left(1-\frac{1}{q}\right).
\end{equation}
It was shown very recently in \cite{KangChang22} that the estimates \eqref{Seregin-50} and \eqref{Seregin-100} are  optimal. In fact, it turned out that the integrability in time is crucial to control $\nabla u$ (see \cite[Theorem 1.5]{KangChang22}). Indeed, it was also proved that if $q>2$, then $\nabla u$ is bounded.  More precisely, for given $p \in (1, \infty)$ and $q>2$, the estimate \eqref{Seregin-50} is valid and furthermore, it follows that
\begin{equation}\label{CK-150}
\nabla u\in\calC^{\frac{\alpha}{2}}_t \calC^{\alpha}_{x}(Q_{\frac{1}{4}}^+),\qquad  0< \alpha <1-\frac{2}{q}.
\end{equation}

As in the case of the Stokes system \eqref{SS-10}-\eqref{SS-20}, we also consider
 the following Navier-Stokes equations:
\begin{equation}\label{navierstokeseq-10}
 \partial_t u-\Delta u+(u\cdot \nabla)u+\nabla p=0, \quad
 {\rm div}\, u=0\qquad \mbox{in}\ \ B_{1}^+ \times (0, \infty),
\end{equation}
with the no-slip boundary condition \eqref{SS-20}.
Perturbation argument of the Stokes system enables a construction of solutions of \eqref{navierstokeseq-10} and \eqref{SS-20} with the singular gradients near the boundary as in the case of the Stokes system mentioned above (see \cite{KangTsai22} and \cite{KangChang22}). 

As a problem related to \eqref{HE-30} and \eqref{Seregin-50},
we also consider the Caccioppoli type inequality near the boundary for both the Stokes system and Navier-Stokes equations. Here we mean the Caccioppoli type inequality
for the Stokes system  \eqref{SS-10}-\eqref{SS-20} by
\begin{equation}\label{cacci-300}
\norm{\nabla u}^2_{L^2(Q_{\frac{1}{2}}^+)}\le C\norm{u}^2_{L^2(Q_{1}^+)}
\end{equation}
and similarly for the Navier-Stokes equations \eqref{navierstokeseq-10} and \eqref{SS-20} by
\begin{equation}\label{cacci-310}
\norm{\nabla u}^2_{L^2(Q_{\frac{1}{2}}^+)}\le C\bke{\norm{u}^2_{L^2(Q_{1}^+)}+\norm{u}^3_{L^3(Q_{1}^+)}},
\end{equation}
where $C$ in \eqref{cacci-300} and \eqref{cacci-310} are independent of respective $u$. The main concern for the above inequalities \eqref{cacci-300} and  \eqref{cacci-310} is whether or not the pressure appears in the right hand sides.
One can ask the same question for the interior case and it has been known that the Caccioppoli type inequalities are valid in the interior for both the Stokes system and Navier-Stokes equations (see \cite{Jin2013} and \cite{Wolf2015}). 
The answer for validation of the Caccioppoli type inequalities is, however, negative near the boundary.
Indeed, it was proved in \cite{CKca} that the Caccioppoli inequality \eqref{cacci-300} for the Stokes system and Caccioppoli type inequality \eqref{cacci-310} for the Navier-Stokes equations, in general, fail near the boundary.

On the other hand, instead of non-zero boundary data, we remark that a non-zero external force, may also cause similar singular behaviors to the gradients of the solutions near the boundary for the Stokes system and the Navier-Stokes equations (see \cite{Chang-Kang23}).  The constructed solution is an analog in the energy class to that of a shear flow type developed in \cite{Seregin-Sverak10}. We are not going to pursue this direction, since we deal with only the non-zero boundary data in this paper.

The main tool for our construction of such singular solutions is to use the explicit representation formula for the solution of the Stokes system in the half-space (see \cite{Gol60} and \cite{Sol68}). More precisely, using the Golovkin tensor
\begin{equation}\label{solformula-5}
K_{ij}(x,t)=-2\,\de_{ij}\,\pd_n\Ga(x,t)-4\,\pd_j\int_0^{x_n}\int_\Si\pd_n\Ga(z,t)\,\pd_iE(x-z)\,dz'\,dz_n-2\,\de_{nj}\pd_iE(x)\de(t),
\end{equation}
we recall that the solution \(w\) of (\ref{stokeseq})-(\ref{stokesbdry}) is given by 
\begin{equation}\label{solformula-10}
u_i(x,t)=\sum_{j=1}^n\int_{-\infty}^{\infty}\int_\Si K_{ij}(x-\xi',t-s)g_j(\xi',s)\,d\xi'\,ds.
\end{equation}

The singular solutions that have been formulated so far are based on the boundary data \eqref{bdry} with non-zero normal component but zero tangential components. 
One can expect singular behaviors of the solutions with only the tangential components of the localized boundary data, but this is not obvious.
The motivation of our study in this paper is to analyze behaviors of the solutions near the boundary when the boundary data are composed of the tangential components with zero normal component.
In addition, we would like to make distinct comparisons between singular behaviors caused by the tangential components and those of the normal component of the boundary data. 
By the way, we remark that all the examples of the boundary data constructed in the previous works were sufficiently regular in the spatial variables and some lack of regularity was assigned to the temporal variable, and thus, in this paper, we consider the opposite case as well.

We now state our main theorems.
Our first result states that if the temporal boundary data are smooth, then the solution of \eqref{stokeseq}-\eqref{stokesbdry} with pressure, are locally smooth away from the support of the boundary data. Therefore, we can say that a local smoothing effect is available if the boundary data are smooth in the temporal variable. 
For convenience, we first define the set
\begin{equation}\label{supp-g-10}
    A:=\left\{y'\in \mathbb{R}^{n-1}\left|~3<|y'|<4\sqrt{n},~-4<y_i<-3\right.\right\}.
\end{equation}
We are now ready to state the first main result.
\begin{theorem}\label{thm1}
Let \(j=1,\, 2,\, \cdots,\, n\) and \(A\) be the set defined in \eqref{supp-g-10}. Suppose that \(u\) is the solution of the Stokes system \eqref{stokeseq}-\eqref{stokesbdry} defined by \eqref{solformula} with the boundary data \(g\) given in \eqref{bdry} with \(a_i\in\mathbb{R}\) and \(\textup{ supp}(g_j^{\mathcal{T}})\subset \left(\frac{3}{4},\, \frac{7}{8}\right)\) such that
\[g_j^{\mathcal{S}}\in L^{1}(\mathbb{R}^{n-1}),\qquad  g_j^\mathcal{T}\in C_{c}^{\infty}(\mathbb{R}).\]
Then \(u, \,p\in C^{\infty}(Q_{1}^{+})\).
\end{theorem}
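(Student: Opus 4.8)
The plan is to exploit the product structure of the boundary data together with the explicit Golovkin representation \eqref{solformula-5}--\eqref{solformula-10}, reducing matters to showing that each scalar kernel convolution is smooth in $Q_1^+$ whenever the temporal factor is $C_c^\infty$ and supported away from the "time window" that $Q_1^+$ sees. The key geometric observations are: (i) the spatial support of $g$ is contained in $\{|y'|>3\}$ (the set $A$), so for $x\in B_1^+$ the argument $x'-\xi'$ of the kernel stays bounded away from the origin in the tangential variables, hence $|x-\xi'|\gtrsim 1$; and (ii) $\mathrm{supp}(g_j^{\mathcal T})\subset(\tfrac34,\tfrac78)$, so for $(x,t)\in Q_1^+$ with $t\le 1$ (recall $Q_r^+\subset B_1^+\times(0,\infty)$ with the time interval $(t-r^2,t)$, here $r<1$) the time variable $t-s$ is bounded away from $0$; this is what tames the parabolic singularity $\partial_n\Gamma(x-\xi',t-s)$ of the heat kernel.

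First I would write $u_i(x,t)=\sum_j \int\!\!\int K_{ij}(x-\xi',t-s)g_j(\xi',s)\,d\xi'\,ds$ and split $K_{ij}$ into its three constitutive pieces as in \eqref{solformula-5}: the local heat-kernel term $-2\delta_{ij}\partial_n\Gamma$, the nonlocal term involving $\partial_j\int_0^{x_n}\!\int_\Sigma \partial_n\Gamma\,\partial_i E$, and the instantaneous term $-2\delta_{nj}\partial_i E(x)\delta(t)$. For the first term, on the region of integration $|x-\xi'|\ge c>0$ and $|t-s|\ge c>0$, so $\Gamma(x-\xi',t-s)$ and all its $x$- and $t$-derivatives are smooth and bounded there, uniformly for $(x,t)\in Q_1^+$; differentiating under the integral sign is justified by the $L^1$ bound on $g_j^{\mathcal S}$ and the uniform bounds on the smooth factor $g_j^{\mathcal T}$, yielding $C^\infty$ regularity of this contribution on $Q_1^+$. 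For the nonlocal term, the inner $z$-integral runs over $z_n\in(0,x_n)$ with $x_n<1$ and $z'\in\Sigma=\{z_n=0\}$... more precisely $z'\in\mathbb R^{n-1}$; one uses that $\partial_i E(x-\xi'-z)$ is a Poisson-type kernel smooth away from $0$, while $\partial_n\Gamma(z,t-s)$ is integrable in $z$ once $t-s\ge c>0$, so the composite kernel and all its derivatives in $(x,t)$ are bounded on $Q_1^+\times\{|\xi'|>3\}$; again differentiate under both integrals. The third term is the easiest: $\delta(t)$ paired against $g_j^{\mathcal T}(s)$ forces $s=t$, but $t\le 1<\tfrac34$ lies outside $\mathrm{supp}(g_j^{\mathcal T})$, so this term contributes nothing on $Q_1^+$ (alternatively: $E$ is harmonic hence smooth away from $0$, and $|x-\xi'|\ge c$). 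Collecting the three pieces gives $u\in C^\infty(Q_1^+)$, and the pressure $p$ is obtained from the analogous representation formula (or from $\nabla p=\Delta u-\partial_t u$ together with $\mathrm{div}\,u=0$), inheriting the same smoothness.

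The main obstacle is the second, nonlocal term: one must check carefully that the iterated integral $\partial_j\int_0^{x_n}\!\int_{\mathbb R^{n-1}}\partial_n\Gamma(z,t-s)\,\partial_iE(x-\xi'-z)\,dz'\,dz_n$ behaves well in \emph{both} variables simultaneously — the $z_n$-integration has its upper endpoint $x_n$ depending on $x$, so the outer $\partial_j$ (when $j=n$) produces a boundary term at $z_n=x_n$ plus an interior term, and one needs the resulting expressions to remain smooth and bounded as $(x,t)$ ranges over $Q_1^+$. This is where the separation of supports is essential: because $|x-\xi'|\ge c$ uniformly and $z_n\le x_n<1$, the argument $x-\xi'-z$ of $\partial_iE$ never approaches the origin (its tangential part has size $\ge c$), so $\partial_iE$ and all its derivatives are bounded on the domain of integration; simultaneously $t-s\ge c>0$ makes $\int_{\mathbb R^{n-1}}|\partial_n\Gamma(z',z_n,t-s)|\,dz'$ and its $z_n$-, $t$-derivatives integrable and bounded. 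A routine but slightly tedious induction on the order of differentiation, moving $D^l_xD^m_t$ inside the integrals and distributing derivatives between the two kernel factors (and onto the variable endpoint $x_n$), completes the argument; the $L^1(\mathbb R^{n-1})$ hypothesis on $g_j^{\mathcal S}$ is exactly what is needed to dominate $\int_A|g_j^{\mathcal S}(\xi')|\,d\xi'<\infty$ after all derivatives have landed on the (bounded) kernel.
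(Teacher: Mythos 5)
Your proposal rests on two claims that are both false, and together they hide the entire difficulty of the theorem. First, the temporal separation you invoke does not exist: for $(x,t)\in Q_1^+$ with $t\in(\tfrac34,1)$ and $s\in(\tfrac34,\tfrac78)\cap(0,t)$, the kernel argument $t-s$ ranges down to $0$, so the parabolic singularity of the Golovkin kernel is genuinely hit (your aside ``$t\le 1<\tfrac34$'' is an arithmetic slip symptomatic of this confusion; note also that the $\delta(t)$ term does \emph{not} vanish for such $t$ --- it produces the nonzero but harmless term $w_i^{\mathcal E}$). Second, the assertion that the nonlocal kernel $C_i(x-\xi',t-s)$ ``and all its derivatives in $(x,t)$ are bounded'' once $|x'-\xi'|\gtrsim 1$ is wrong: the tangential part of $x-\xi'-z$ is $x'-\xi'-z'$ with $z'$ ranging over all of $\mathbb{R}^{n-1}$, so $\partial_iE$ is singular inside the domain of integration, and the sharp estimates recorded in the Preliminaries give $|\partial_{x_n}^k C_i(x,t)|\lesssim t^{-1/2}(x_n^2+t)^{-k/2}(|x|^2+t)^{-(n-1)/2}$ and $|\partial_{x_n}^k\partial_{x'}^l B(x,t)|\lesssim e^{-x_n^2/(10t)}t^{-(k+1)/2}(|x|^2+t)^{-(l+n-2)/2}$. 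Spatial separation kills the factors involving $|x|$ but not the factors $t^{-1/2}(x_n^2+t)^{-k/2}$: already one normal derivative produces a kernel of size $s^{-1/2}(x_n^2+s)^{-1/2}$, which is not integrable in $s$ near $0$ on the boundary $x_n=0$. So ``differentiating under the integral sign'' fails for normal derivatives, which is exactly the non-local boundary effect the paper is about. A decisive sanity check: your argument uses only $\|g_j^{\mathcal T}\|_{L^\infty}$, never the smoothness of $g_j^{\mathcal T}$; if it were correct it would prove boundedness of $\partial_n^2 u_i$ for merely bounded temporal data, contradicting Theorem~\ref{thm2} (and the counterexamples of \cite{Kang05}, \cite{KangChang22}).

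What the paper actually does, and what your proposal is missing, is the following combination. Normal derivatives of $C_i$ are systematically traded for tangential ones via the identities \eqref{rel-C1}--\eqref{rel-C2}, at the price of producing derivatives of $B$; normal derivatives of $B$ of order $\ge 2$ are converted using the heat equation $\partial_n^2B=\partial_tB-\Delta_{x'}B$, and the resulting $\partial_t$ is moved onto $g_j^{\mathcal T}$ by integration by parts in $s$ (legitimate precisely because $\partial_t^kg_j^{\mathcal T}(0)=0$ for all $k$ --- this is where $g_j^{\mathcal T}\in C_c^\infty$ enters essentially). Even after this, one residual normal derivative of $B$ survives for even-order normal derivatives of $u$, giving only the pointwise bound $\log(1+t/x_n^2)$; the paper bounds these terms in $L^p(Q_1^+)$ for every $p<\infty$ and recovers $L^\infty$ by interpolating with the (bounded) next odd-order derivative. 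None of these three steps --- the algebraic reduction of normal derivatives, the integration by parts in time, and the $L^p$-plus-interpolation bootstrap --- appears in your proposal, so the argument as written does not close.
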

\begin{remark}
Singular behavior of the gradients of the solutions of the Stokes system \eqref{stokeseq}-\eqref{stokesbdry} was shown for the boundary data of the form in (1.3) with 
$a_n\neq 0$, $a_i=0$ for $i<n$, where $g_n^{\mathcal{S}}$ is sufficiently smooth and $g_n^{\mathcal{T}}$ is less regular (see \cite{Kang05}, \cite{CKca}, \cite{KangTsai22FE} and \cite{KangChang22} for more details). Theorem \ref{thm1} shows that the regularity of the temporal boundary data is crucial for the regularity of the solutions
away from the support of boundary data.
\end{remark}

Our second theorem discusses the local regularity of the solutions where now the spatial boundary data are smooth with the same assumptions on the support as described in the previous theorem.

\begin{theorem}\label{thm2}
Let \(j=1,~ 2,~\cdots, ~n-1\), \(1<p<\infty\) and $A$ be the set defined in \eqref{supp-g-10}. Suppose that the boundary data \(g\) with \(a_i\in\mathbb{R}\), \(\textup{supp}(g_j^{\mathcal{S}})\subset A\) and \(\textup{supp}(g_j^{\mathcal{T}})\subset \left(\frac{3}{4},~ 1\right)\) in \eqref{bdry} satisfy
\[
g_j^{\mathcal{S}}\in W^{2,1}(A),\qquad g_j^\mathcal{T}\in L^{\infty}(\mathbb{R}) \setminus \dot{B}_{pp}^{\frac{1}{2}-\frac{1}{2p}}(\mathbb{R}).
\]
Then the solution of the Stokes system \eqref{stokeseq} - \eqref{stokesbdry} defined by \eqref{solformula} with the boundary data \(g\) satisfy
 \begin{equation}\label{thm2estimate1}
        \left\|u\right\|_{L^{\infty}(Q_{2}^{+})}+\left\|\nabla u\right\|_{L^{\infty}(Q_{2}^{+})} + \sum_{j=1}^{n-1}\sum_{i=1}^{n}\left\|\partial_i \partial_j u\right\|_{L^{\infty}(Q_{2}^{+})}+\left\|\partial_n^2 u_n\right\|_{L^{\infty}(Q_{2}^{+})}<\infty,
\end{equation}
\begin{equation}\label{thm2estimate2}
    \left\|\partial_n^2 u_i\right\|_{L^p(Q_{\frac{1}{2}}^+)}=\infty,\qquad i=1, \,2,\,\cdots,\, n-1.
\end{equation}
Similar construction can be made for
the solution of the Navier-Stokes equations \eqref{navierstokeseq-10} with sufficiently small \(a_i\) and the no-slip boundary condition \eqref{SS-20} satisfying \eqref{thm2estimate1} with \(Q_1^+\) in place of \(Q_2^+\) and \eqref{thm2estimate2}.
\end{theorem}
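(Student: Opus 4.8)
The plan is to construct the solution explicitly via the representation formula \eqref{solformula-10} with the Golovkin tensor \eqref{solformula-5}, exploiting the separable structure of $g$ and the fact that $\operatorname{supp}(g_j^{\mathcal S})\subset A$ is disjoint from the closure of the half-ball $B_2^+$. First I would treat each tangential component $j=1,\dots,n-1$ separately, so it suffices to analyze one boundary datum $g_j(\xi',s)=a_jg_j^{\mathcal S}(\xi')g_j^{\mathcal T}(s)$. Plugging this into \eqref{solformula-10} and examining the three terms of $K_{ij}$, the crucial observation is that for $x\in B_2^+$ the kernel $K_{ij}(x-\xi',t-s)$ is, as a function of $(\xi',s)$ on $A\times(\tfrac34,1)$, smooth and jointly integrable, with all spatial derivatives controlled because $|x-\xi'|$ stays bounded away from zero; hence $g_j^{\mathcal S}\in W^{2,1}(A)$ (indeed $L^1$ would suffice for boundedness) lets us move finitely many $\xi'$-derivatives onto $g_j^{\mathcal S}$ via integration by parts and gain any polynomial spatial decay we need, while the only obstruction to temporal/normal regularity is the low regularity of $g_j^{\mathcal T}$. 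This yields the positive bounds \eqref{thm2estimate1}: $u$, $\nabla u$, all mixed derivatives $\partial_i\partial_j u$ with $j\le n-1$, and $\partial_n^2 u_n$ (the latter because $\partial_n u_n=-\sum_{k<n}\partial_k u_k$ by the divergence-free condition, transferring a normal derivative into a tangential one that is already controlled) are bounded on $Q_2^+$ — all expressed through absolutely convergent integrals against $g_j^{\mathcal T}\in L^\infty$.

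For the negative statement \eqref{thm2estimate2}, I would isolate the precise term in $u_i$ (for $i\le n-1$) whose second normal derivative $\partial_n^2 u_i$ fails to be in $L^p(Q_{1/2}^+)$. After differentiating the representation formula twice in $x_n$ and integrating away the smooth $\xi'$-dependence against $g_j^{\mathcal S}$, one is left with a temporal convolution of $g_j^{\mathcal T}$ against a kernel behaving like a fractional-integration/derivation kernel of the right order; the blow-up of $\|\partial_n^2 u_i\|_{L^p(Q_{1/2}^+)}$ near $x_n=0$ is then equivalent to the statement that the time-convolution lands outside the Besov space naturally associated with the trace, which is exactly why we hypothesize $g_j^{\mathcal T}\notin\dot B_{pp}^{\frac12-\frac1{2p}}(\mathbb R)$. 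Concretely I expect to reduce to a one-dimensional estimate showing $\int_{1/2}^{1}\!\!\int_0^{1/2}|x_n|^{-1}\,\big|\,(\text{kernel})*g_j^{\mathcal T}\,\big|^p\,dx_n\,dt=\infty$ precisely when $g_j^{\mathcal T}$ has that Besov regularity exactly failing, using the characterization of $\dot B_{pp}^{s}$ via the heat (or Poisson) semigroup. The two remaining components of $K_{ij}$ either vanish in this regime (the $\delta_{nj}$ term is absent since $j\ne n$) or contribute only lower-order, bounded pieces, so they do not interfere with the singularity.

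For the Navier--Stokes part, I would run the standard perturbation/fixed-point argument around the Stokes solution $u^{\mathcal S}$ just constructed: writing $u=u^{\mathcal S}+v$, the remainder $v$ solves a Stokes system with forcing $-(u\cdot\nabla)u$ and zero boundary data on $\Lambda'$, and for $a_i$ small the nonlinear map is a contraction in a space in which $u^{\mathcal S}$ and its first and second tangential derivatives are bounded on a slightly larger cylinder. Since $(u\cdot\nabla)u$ and its relevant derivatives remain bounded on $Q_1^+$ — here the loss of one ball, from $Q_2^+$ to $Q_1^+$, is used to absorb the nonlocal tail of the perturbation — interior-type and flat-boundary Stokes regularity (e.g.\ via \eqref{Seregin-50}) gives that $v$ is as regular as claimed, so $u$ still satisfies \eqref{thm2estimate1} on $Q_1^+$, while the singular term in $\partial_n^2 u_i$ survives because $v$ contributes only bounded second normal derivatives and cannot cancel the unbounded Stokes piece.

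The main obstacle I anticipate is the sharp endpoint analysis behind \eqref{thm2estimate2}: identifying the exact kernel left after the spatial integration, matching its homogeneity in $(x_n,t-s)$ to a fractional operator of order $\tfrac12-\tfrac1{2p}$ in time, and then proving rigorously that $g_j^{\mathcal T}\notin\dot B_{pp}^{\frac12-\frac1{2p}}$ forces $\|\partial_n^2u_i\|_{L^p(Q_{1/2}^+)}=\infty$ — the Besov non-membership must be translated into a genuine divergence of the weighted integral near $x_n=0$, which requires care with the interplay between the $x_n$-weight, the $L^p$-in-$t$ norm, and the semigroup characterization of the Besov space, rather than merely a formal scaling count.
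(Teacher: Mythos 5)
Your proposal follows essentially the same route as the paper: the positive bounds come from the Golovkin representation, the support separation, integration by parts onto $g_j^{\mathcal S}\in W^{2,1}$ (via the identities for $C_i$ and the decomposition of $\partial_j\partial_n B$ into a one-dimensional heat kernel times $\partial_j E(x',0)$ plus a tame remainder), and $\operatorname{div}u=0$ for $\partial_n^2u_n$; the blow-up is reduced exactly as you predict to a time-convolution of $g_j^{\mathcal T}$ with $\partial_n^2\Gamma_1(x_n,\cdot)$, whose $L^p(Q^+_{1/2})$ divergence is equivalent to failure of the $\dot B^{1/2-1/(2p)}_{pp}$ seminorm (the paper cites \cite{KangChang22} for this equivalence rather than reproving it); and the Navier--Stokes part is the same cutoff-plus-Picard perturbation with small $a_i$, where the perturbation's full Hessian is controlled only in $L^r$, $r>n+2$ (not $L^\infty$), which is still enough on the bounded cylinder to preserve the $L^p$ blow-up.
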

\begin{remark}
The result of Theorem \ref{thm2} is similar to that of \cite[Theorem 1.1]{KangChang22}, where the tangential components of the boundary data are absent. One comparison is that the normal derivatives are singular in \cite[Theorem 1.1]{KangChang22} but it is not the case in Theorem \ref{thm2}, where the second normal derivatives are singular. This shows that the tangential components of the boundary data give rise to a milder singularity compared to that of the normal component of the boundary data of the same type.
\end{remark}

Our third theorem discusses the global pointwise estimates of the solutions and their \(L^p\) estimates. Also the pointwise estimate of the corresponding pressure is addressed.
\begin{theorem}\label{thm3}
Let \(n\geq 2\) and \(1\leq k \leq n-1\). Suppose that \(u\) is the solution of the Stokes equation \eqref{stokeseq}-\eqref{stokesbdry} defined by \eqref{solformula} with the boundary data \(g\) given in \eqref{bdry} with \(a_j={\delta_{kj}}\) and \(g_k^{\mathcal{S}}\in C_c^{\infty}(B_1^{'})\), \(g_k^\mathcal{T}\in C_c(\mathbb{R})\,\cap\, C^1(0,1)\), where 
\begin{align*}
    \textup{supp}(g_k^{\mathcal{T}})\subset\left[\frac{1}{4},~1\right],~~g_k^{\mathcal{T}}(s)=(1-s)^a~~\left(s\in \left[\frac{1}{2}, ~1\right],~ 0<a\leq \frac{1}{2}\right).
\end{align*}
Then \(u\) satisfies the following bounds:
\begin{align*}
    |u_i(x,t)| &\leq \frac{C}{\left< x\right>^{n}}\left\|g_k^{\mathcal{S}}\right\|_{W^{1,\infty}},\qquad 1\leq i \leq n,\\ 
    |\partial_j u_i(x,t)| &\leq C\left(\delta_{j<n}\frac{1}{\left< x\right>^{n+1}}+\left[\delta_{i<n}\frac{1}{\left<x\right>^n}+\delta_{in}\frac{1}{\left<x\right>^{n+1}}+\left(\frac{1}{\left<x\right>^{n+2}}+\mathbbm{1}_{|x|<2}\textup{LN}\right)\right]\delta_{jn}\right)\left\|g_k^{\mathcal{S}}\right\|_{W^{2,\infty}}, \quad1\leq i, ~j\leq n,\\ 
    |\partial_l\partial_j u_i(x,t)|&\leq C\left[ \delta_{l<n}\delta_{j<n}\frac{1}{\left<x\right>^{n+2}}+\left(\delta_{ln}\delta_{j<n}+\delta_{l<n}\delta_{jn}\right)\left(\frac{1}{\left<x\right>^{n+1}}+\mathbbm{1}_{|x|<2}\textup{LN}\right)\right.\\
    &\quad+\delta_{jn}\delta_{ln}\left(\mathbbm{1}_{|x|>2}\left(\frac{1}{\left<x\right>^{n+2}}+\frac{\textup{LN}}{\left<x\right>^{n}(x_n+1)^{2a}}\right)+\mathbbm{1}_{|x|<2}\frac{\log\left(1+\frac{t}{x_n}\right)}{(x_n^2+|t-1|)^{1-a}}\right)\\ 
    &\quad\left.+\delta_{n2}\sigma\mathbbm{1}_{|x|<2}\textup{LN} \log\left(2+\frac{1}{x_2^2+|t-1|}\right)\right]\left\|g_k^{\mathcal{S}}\right\|_{W^{3,\infty}},\qquad 1\leq i,~ j,~l\leq n,  
\end{align*}
for \(x\in \mathbb{R}_{+}^{n}\) and \(t\in [0,2]\), where \(\sigma=\delta_{i<n}\delta_{jn}\delta_{ln}\),
and
\begin{align*}
    \textup{LN}:=(x_n^2+|t-1|)^{a-\frac{1}{2}}+\delta_{a=\frac{1}{2}}\log \left(2+\frac{1}{x_n^2+|t-1|}\right).
\end{align*}
and \(C\) is independent \(x\), \(t\), and \(g_k\).
We have that
\begin{enumerate}
    \item \(u\in L_{x,t}^{p,q}(\mathbb{R}_+^n\times(0,2))\) for \(p\in\left(1,\infty\right]\) and \(q \in [1,\infty]\),
    \item \(\nabla u\in L_{x,t}^{p,p}(\mathbb{R}_+^n\times(0,2))\) for \(p\in\left(1,\frac{3}{1-2a}\right)\) if \(a<\frac{1}{2}\) and \(p\in(1,\infty)\) if \(a=\frac{1}{2}\),
    \item\(\nabla^2 u\in L_{x,t}^{p,p}(\mathbb{R}_+^n\times(0,2))\) for \(p \in \left[1, \frac{3}{2(1-a)}\right)\) if \(a<\frac{1}{2}\) and \(p\in [1,3)\) if \(a=\frac{1}{2}\).
\end{enumerate}
In particular, \(u\) belongs to the energy class \(L_t^\infty L_x^2 \cap L_{t}^2 \dot{H}_x^1(\mathbb{R}_+^n\times (0,2))\).\\

Moreover, for \(n\geq 2\), we have the following pointwise estimate for the pressure \(p\): for any multiindex \(\alpha\in \mathbb{Z}_0^n\) and \(\epsilon>0\),
\begin{align*}
    |\nabla^\alpha p(x,t)|&\leq C\mathbbm{1}_{\frac{1}{4}\leq t\leq 1}\frac{1}{\left<x\right>^{n+|\alpha|}}+C\mathbbm{1}_{t\leq \frac{1}{2}}\frac{1}{\left<x\right>^{n-1+|\alpha|}}+C\mathbbm{1}_{|x|<2}\left(\mathbbm{1}_{\frac{1}{2}\leq t<1}\frac{1}{(1-t)^{\frac{1+\epsilon}{2}-a}}+\mathbbm{1}_{t> 1}\frac{\textup{LN}^*}{(t-1)^{\frac{\epsilon}{2}}}\right)\\
    &+C\mathbbm{1}_{|x|>2}\left(\mathbbm{1}_{\frac{1}{2}\leq t<1}+\mathbbm{1}_{t>1}\right)\frac{\textup{LN}^*}{|x|^{n-1+|\alpha|}},
\end{align*}
where 
\begin{equation*}
    \textup{LN}^*:=|t-1|^{a-\frac{1}{2}}\mathbbm{1}_{a<\frac{1}{2}}+\log\left(2+\frac{1}{|t-1|}\right)\mathbbm{1}_{a=\frac{1}{2}}
\end{equation*}
and \(C=C(\alpha,\,\epsilon, \,g_k)\) is a positive constant.
\end{theorem}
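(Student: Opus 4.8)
\textbf{Proof strategy for Theorem \ref{thm3}.} The proof is a direct, if lengthy, analysis of the Golovkin representation \eqref{solformula-5}--\eqref{solformula-10}. Since $a_j=\de_{kj}$ with $k\le n-1$, only the tangential slot $j=k$ contributes, and because $\de_{nk}=0$ the instantaneous term $-2\de_{nk}\pd_iE(x)\de(t)$ in $K_{ik}$ disappears, so that
\begin{align*}
u_i(x,t)&=-2\de_{ik}\!\int_\R\!\!\int_\Si\pd_n\Ga(x-\xi',t-s)\,g_k^{\mathcal S}(\xi')g_k^{\mathcal T}(s)\,d\xi'\,ds\\
&\quad-4\,\pd_k\!\int_\R\!\!\int_\Si N_i(x,\xi',t-s)\,g_k^{\mathcal S}(\xi')g_k^{\mathcal T}(s)\,d\xi'\,ds,
\end{align*}
with $N_i(x,\xi',\ta):=\int_0^{x_n}\!\int_\Si\pd_n\Ga(z,\ta)\,\pd_iE(x-\xi'-z)\,dz$. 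We differentiate under the integral sign; whenever the chain rule would place a spatial derivative on a kernel through the convolution with $g_k^{\mathcal S}$, we instead transfer it onto $g_k^{\mathcal S}$ (legitimate as $g_k^{\mathcal S}\in C_c^\infty$), so that the only kernels ever encountered are $\pd_n^{1+m}\Ga$ and $\nabla^{1+m}E$ with $m\le 2$ — this is precisely why the bounds are phrased in terms of $\|g_k^{\mathcal S}\|_{W^{m,\infty}}$. The pressure admits a parallel representation; taking the divergence of \eqref{stokeseq} shows that the pressure is harmonic in $x$ at each time, and its kernel carries only the spatial singularity of $\nabla E$ multiplied by a temporal factor built from the boundary trace of $\pd_n\Ga$, whence every spatial derivative of the pressure is dominated by the spatial decay of the pressure itself. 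The estimates are then carried out after splitting $\R^n_+=\{|x|\ge 2\}\cup\{|x|<2\}$ and decomposing $(0,2)$ according to $\textup{supp}(g_k^{\mathcal T})\subset[\tfrac14,1]$ and the H\"older profile $g_k^{\mathcal T}(s)=(1-s)^a$ on $[\tfrac12,1]$.

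\textbf{Far field.} For $|x|\ge 2$ and $\xi'\in\textup{supp}(g_k^{\mathcal S})\subset B_1'$ the spatial arguments stay at distance $\gtrsim|x|$ from the kernel singularities. The heat--kernel term $\de_{ik}\pd_n\Ga(x-\xi',t-s)$ is then rapidly decaying — faster than any polynomial in $|x|$, because $t-s$ is bounded and the Gaussian confines the relevant $z$ to $|z|\lesssim 1$ — so the polynomial tail comes entirely from $N_i$, in which the decay in $|x|$ is carried by the factor $\nabla E(x-\xi'-z)$. Using $|\nabla^{1+m}E(y)|\lesssim|y|^{-(n-1+m)}$ together with the elementary bounds $\int_0^{x_n}\!\int_\Si|\pd_n\Ga(z,\ta)|\,dz\lesssim\ta^{-1/2}$ and $\int_\Si|\pd_n\Ga((z',x_n),\ta)|\,dz'\lesssim x_n\ta^{-3/2}e^{-x_n^2/4\ta}$ (the latter governing the boundary terms produced by normal differentiation of the upper limit $z_n=x_n$), one obtains $|\nabla^\al_x u_i(x,t)|\lesssim\langle x\rangle^{-m}$ for the appropriate exponent $m$ (equal to $n+|\al|$ for the tangential derivatives, reduced by one for each purely normal derivative whose boundary term at $z_n=x_n$ survives) times a temporal factor; for $u$ and $\nabla u$ the latter is merely the indicator cutoffs, while for $\nabla^2 u$ the non-Lipschitz endpoint of $g_k^{\mathcal T}$ at $s=1$ forces the factor $\textup{LN}$ via the model estimate below (and $\textup{LN}^*$ for the pressure). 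These features — the loss of decay in the normal directions, the vanishing or survival of the boundary terms at $z_n=x_n$ according to the index (the leading one drops when $i=n$ because $\pd_nE$ vanishes on $\{x_n=0\}$), and the residual $x_n$-decay — together account for the intricate index-dependence ($\de_{jn}$, $\de_{ln}$, the $(x_n+1)^{-2a}$ weight) in the statement.

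\textbf{Near field.} This is the technical core. Here the kernel singularities cannot be discarded, and the singular behavior of $u$ is that of normal derivatives of the heat extension of the boundary datum: for the $\pd_n\Ga$--part of the kernel, $\pd_n$ acts at the boundary essentially as the half--time derivative $\pd_t^{1/2}$, so the $m$-th normal derivative of $u$ at scale $x_n$ behaves like $\pd_t^{m/2}$ applied to $g_k^{\mathcal T}$, smeared over the parabolic scale $x_n^2$. Since $\pd_s g_k^{\mathcal T}(s)\sim(1-s)_+^{a-1}$ near $s=1$, the model estimate for the first normal derivative is
\[
\int_0^t\frac{(1-s)_+^{a-1}}{\sqrt{x_n^2+t-s}}\,ds\ \lesssim\ (x_n^2+|t-1|)^{\,a-\frac12},\qquad 0<a<\tfrac12,
\]
the right side being replaced by $\log\bke{2+\tfrac1{x_n^2+|t-1|}}$ when $a=\tfrac12$; the analogous computation with one further time derivative gives the weight $(x_n^2+|t-1|)^{a-1}$ for the second normal derivative, and these are the content of $\textup{LN}$. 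The nonlocal term $N_i$ carries the behavior proper to the Stokes system: performing the $z$-integral first produces a kernel with the integrable singularity of $\nabla E$, and its $z_n$-integration over $(0,x_n)$ contributes the logarithmic factor $\log(1+\tfrac t{x_n})$; a normal derivative acting on the upper limit $z_n=x_n$ leaves a boundary term which, together with this $\nabla E$-singularity, is responsible for the worst component $\pd_n^2 u_i$ with $i<n$. The double logarithm $\de_{n2}\,\si\,\mathbbm{1}_{|x|<2}\,\textup{LN}\,\log\bke{2+\tfrac1{x_2^2+|t-1|}}$ is special to $n=2$ and to that component, where the remaining tangential convolution is one-dimensional and borderline, costing $\nabla E$ one further logarithm. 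The pressure is simpler by harmonicity: its kernel is a pure $\nabla E$ times the temporal factor $\pd_n\Ga|_{x_n=0}*g_k^{\mathcal T}$, of size $\textup{LN}^*$ near $t=1$, so each spatial derivative is estimated by the same spatial decay, with the $\ep$-loss arising at the endpoint $t\to1$. The genuine difficulty — and the source of the actual length of the proof — is the bookkeeping: differentiating this explicit double convolution up to second order and reading off, for every index pattern $(i,j,l)$ and in the borderline cases $a=\tfrac12$ and $n=2$, the exact singular weight together with its logarithmic corrections, uniformly in $x$, $t$ and $g_k$.

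\textbf{Integrability and the energy class.} Granting the pointwise bounds, (1)--(3) follow by integration. In the far field $\langle x\rangle^{-m}\in L^p_x(\R^n_+)$ whenever $mp>n$, which holds throughout the stated ranges, and the bounded time interval $(0,2)$ imposes no condition on $q$; hence $u\in L^{p,q}_{x,t}$ for all $p>1$, $q\in[1,\infty]$, which is (1). In the near field one integrates the singular weights over $\{|x|<2\}\times(0,2)$; the change of variables $\rho=x_n$, $t-1=\rho^2\mu$ shows that $(x_n^2+|t-1|)^{\bt}\in L^p(\{|x|<2\}\times(0,2))$ exactly when $p\bt>-\tfrac32$, the $\tfrac32=1+\tfrac12$ recording the contributions of the $x_n$- and parabolically-weighted $t$-directions. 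With $\bt=a-\tfrac12$ this yields $p<\tfrac3{1-2a}$ for $\nabla u$, and with $\bt=a-1$ it yields $p<\tfrac3{2(1-a)}$ for $\nabla^2 u$ (and $p<\infty$, resp.\ $p<3$, when $a=\tfrac12$); the logarithmic factors cost only an arbitrarily small power and are absorbed since the exponent ranges are open at the top — this is (2)--(3). Finally (1) with $(p,q)=(2,\infty)$ gives $u\in L^\infty_t L^2_x$, and $2<\tfrac3{1-2a}$ (or $p=2$ admissible when $a=\tfrac12$) gives $\nabla u\in L^2_{x,t}$, so $u\in L^\infty_t L^2_x\cap L^2_t\dot H^1_x(\R^n_+\times(0,2))$.
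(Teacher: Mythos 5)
Your strategy for the velocity estimates and for the $L^p$/energy-class conclusions is essentially the paper's own: the same decomposition of $K_{ik}$ into the local heat-kernel part $-2\delta_{ik}\partial_n\Gamma$ and the nonlocal part $-4\partial_kC_i$, the same transfer of spatial derivatives onto $g_k^{\mathcal S}$ (which is exactly why the norms $\|g_k^{\mathcal S}\|_{W^{m,\infty}}$ appear), the same far/near-field split at $|x|=2$, the same use of the identities \eqref{rel-C1}--\eqref{rel-C2} to convert normal derivatives of $C_i$ into tangential ones plus $B$-terms (this is what produces the index pattern in $\delta_{jn},\delta_{ln}$ and the $(x_n+1)^{-2a}$ weight), and your model temporal integral is precisely the content of Lemma \ref{alg} combined with Lemma \ref{integralidentity}. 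The integration argument for (1)--(3) and the energy class matches Step 4 of the paper. As a strategy this is sound; what remains is the case-by-case bookkeeping that you yourself identify as the bulk of the work.

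The one place where your reasoning has a genuine gap is the pressure. The pressure kernel is not ``a pure $\nabla E$ times a temporal factor'': in the representation \eqref{secondformulapressure} the dominant contribution is $-4(\partial_t-\Delta_{x'})\int\!\!\int\partial_kA(x-y',t-s)\,g_k(y',s)\,dy'\,ds$, a genuine space--time convolution with the kernel $A$ of \eqref{def-A}; its estimate requires the tangential convolution bound \eqref{claimineq} and a three-way case analysis in $t$, and this is where $\textup{LN}^*$ and the $\epsilon$-loss actually arise. More importantly, your claim that harmonicity lets every spatial derivative of $p$ be ``dominated by the spatial decay of the pressure itself'' fails uniformly up to the boundary: interior derivative estimates for harmonic functions degenerate like $x_n^{-|\alpha|}$ as $x_n\to0$, whereas the stated bound for $\nabla^\alpha p$ is uniform in $x_n$. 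The correct route — and the paper's — is simply to repeat the kernel estimates with the extra derivatives transferred onto $g_k^{\mathcal S}$, which is permissible here because the constant for the pressure is allowed to depend on $g_k$.
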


\begin{remark}
\begin{enumerate}
\item  The pointwise estimates in Theorem \ref{thm3} can be compared to \cite[Proposition 3.1]{KangTsai22FE}, where the pointwise estimates caused by the normal component of the boundary data of the same type were obtained.

 \item The above pointwise estimate for the pressure \(p\) shows that
 \begin{enumerate}
     \item  \(p\in L_{x,t}^{q,r}(\mathbb{R}_+^{n}\times(0,2))\) for any \(q> \frac{n}{n-1}\) and \(1\leq r <\frac{2}{1-2a}\),
     \item\(\nabla p\in L_{x,t}^{q,r}(\mathbb{R}_+^{n}\times(0,2))\) for any \(q>1\) and \(1\leq r<\frac{2}{1-2a}\),
     \item \(\nabla^k p\in L_{x,t}^{q,r}(\mathbb{R}_+^{n}\times(0,2))\) for any \(k\geq  2\), \(q\geq 1\) and \(1\leq  r<\frac{2}{1-2a}\).
 \end{enumerate}
\end{enumerate}
\end{remark}

We now discuss the lower bounds of the second normal derivatives of the tangential components of the solutions.
\begin{theorem}\label{thm4}
Let \(n\geq 3\) and \(1\leq k\leq n-1\). Suppose that \(u\) is the solution of the Stokes equation \eqref{stokeseq}-\eqref{stokesbdry} defined by \eqref{solformula} with the boundary data \(g\) given in \eqref{bdry} with \(a_j={\delta_{kj}}\), and \(g_k^{\mathcal{S}}\in C_c^{3}(B_1^{'})\), \(g_k^\mathcal{T}\in C_c(\mathbb{R})\cap C^1(0,1)\), where 
\begin{align*}
    \textup{supp}(g_k^{\mathcal{T}})\subset\left[\frac{1}{4},~1\right],~~g_k^{\mathcal{T}}(s)=(1-s)^a~~\left(s\in \left[\frac{1}{2},~ 1\right],~ 0<a\leq \frac{1}{2}\right).
\end{align*}
Furthermore, choose \(g_k^{\mathcal{S}}\) to be the following product form
\begin{equation*}
    g_k^{\mathcal{S}}(y')=\prod_{j=1}^{n-1}\mathcal{G}(y_j),
\end{equation*}
where \(\mathcal{G}:\mathbb{R}\rightarrow\mathbb{R}\) is smooth, even, supported in \(\left(-\frac{4}{5\sqrt{n-1}}, \frac{4}{5\sqrt{n-1}}\right)=:(-r,r)\), \(\mathcal{G}(x)=1\) for \(|x|<\frac{1}{2\sqrt{n-1}}=:p\) and \(\mathcal{G}'(x)\leq 0\) for \(x>0\) (see \textup{Appendix 8.3}). Then the followings hold:\\
1) If \(i<n, ~i\neq k\), then for any \(\alpha\), \(\beta>0\), and \(|x'|\geq 3\) with \(x_i\geq \alpha\), \(x_k\geq \beta\), \(x_n\leq 1\) (see \textup{Appendix 8.4}), we have
\begin{equation*}
    \left|\partial_{x_n}^2 u_i(x,1)\right|\geq
    \begin{cases}
\displaystyle\frac{C_1 \alpha\beta}{|x'|^{n+2}}\log\frac{2}{x_n}-\frac{C_2}{|x'|^{n-2}} &\textup{if  } a=\frac{1}{2}, \\
\displaystyle\frac{C_1\alpha\beta}{|x'|^{n+2}x_n^{1-2a}}-\frac{C_2}{|x'|^{n-2}} &\textup{if  } 0<a<\frac{1}{2},
\end{cases}
\end{equation*}
where \(C_1\) and \(C_2\) are positive constants independent of \(x
\), \(\alpha\), \(\beta\).\\
2) If \(i=k\), and we further assume that \(\mathcal{G}''(x)=0\) at \(x=\frac{13}{20\sqrt{n-1}}=:q\), \(\mathcal{G}''(2q-x)=-\mathcal{G}''(x)\) for \(p\leq x\leq r\) and \(\mathcal{G}''(x)=0\) on \(\left(p,p+\frac{1}{20\sqrt{n-1}}\right)\cup\left(q-\frac{1}{20\sqrt{n-1}},q\right)\) (see \textup{Appendix 8.3}). Then for \(|x'|\geq 3\) and either \(x_k<p\), or \(x_k>r\) and \(\left(\sum_{i\neq k,n}x_i^2\right)^\frac{1}{2}<\frac{3}{2}\) (see \textup{Appendix 8.4}), we have
\begin{equation*}
    \left|\partial_{x_n}^2 u_i(x,1)\right|\geq
\begin{cases}
        \displaystyle\frac{C_1}{|x'|^{n+2}}\log\frac{2}{x_n}-\frac{C_2}{|x'|^{n-2}}& \textup{if  } a=\frac{1}{2},\\
        \displaystyle\frac{C_1}{|x'|^{n+2} x_n^{1-2a}}-\frac{C_2}{|x'|^{n-2}} & \textup{if  } 0<a<\frac{1}{2},
    \end{cases}
\end{equation*}
where \(C_1\) and \(C_2\) are positive constants independent of \(x\).
\end{theorem}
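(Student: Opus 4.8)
The plan is to compute from the representation formula \eqref{solformula-10}. Since $a_j=\delta_{kj}$ with $k<n$, the instantaneous term of the Golovkin tensor \eqref{solformula-5} drops out, and
\[
u_i(x,t)=-2\delta_{ik}\,(\partial_n\Gamma\ast g_k)(x,t)-4\,\partial_k(\mathcal I\ast g_k)(x,t),\qquad \mathcal I(x,t):=\int_0^{x_n}\!\!\int_\Sigma \partial_n\Gamma\big((z',z_n),t\big)\,\partial_iE\big((x'-z',x_n-z_n)\big)\,dz'dz_n,
\]
where $\ast$ is convolution in $x'$ and $t$ only. I would apply $\partial_{x_n}^2$ and then put $t=1$. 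The term $-2\delta_{ik}\,\partial_n^3\Gamma\ast g_k$ is harmless: since $|x'-\xi'|\ge|x'|-1\ge2$ on $\operatorname{supp}(g_k^{\mathcal S})$, the kernel $\partial_n^3\Gamma$ decays super-exponentially there and its time integral against $g_k^{\mathcal T}$ is $O(x_n)$. For the elliptic part, differentiating $\mathcal I$ twice in $x_n$ by the Leibniz rule, the endpoint contributions at $z_n=x_n$ disappear because $\partial_n\partial_iE((y',0))\equiv0$, leaving the clean identity $\partial_{x_n}^2\mathcal I=\mathcal A+\mathcal D$ with
\[
\mathcal A(x,t):=\int_\Sigma \partial_n^2\Gamma\big((z',x_n),t\big)\,\partial_iE\big((x'-z',0)\big)\,dz',\qquad \mathcal D(x,t):=\int_0^{x_n}\!\!\int_\Sigma \partial_n\Gamma\big((z',z_n),t\big)\,(\partial_n^2\partial_iE)\big((x'-z',x_n-z_n)\big)\,dz'dz_n ;
\]
here $\mathcal D$ is harmless because its $E$-factor is evaluated with last coordinate in $(0,x_n)$, hence bounded near $x$. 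Thus the only singular contribution to $\partial_{x_n}^2u_i(x,1)$ is
\[
\mathrm S_i(x):=-4\,\partial_k(\mathcal A\ast g_k)(x,1)=-4\int_0^1\!\!\int_\Sigma\!\!\int_\Sigma \partial_n^2\Gamma\big((z',x_n),1-s\big)\,(\partial_k\partial_iE)\big((x'-\xi'-z',0)\big)\,g_k^{\mathcal S}(\xi')g_k^{\mathcal T}(s)\,d\xi'dz'ds .
\]

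Next I would factorize $\mathrm S_i$. Since $\partial_n^2\Gamma((z',x_n),\tau)$ is even in $z'$, with $\int_\Sigma\partial_n^2\Gamma((z',x_n),\tau)\,dz'=\partial_{x_n}^2[(4\pi\tau)^{-1/2}e^{-x_n^2/4\tau}]$ (the second $x_n$-derivative of the one-dimensional heat kernel), and since $g_k^{\mathcal S}=\prod_j\mathcal G(\xi_j)$ has vanishing first moments (each $\mathcal G$ being even), a Taylor expansion of $\partial_k\partial_iE$ about $(x',0)$ gives, modulo remainders that decay strictly faster in $|x'|$,
\[
\mathrm S_i(x)=-4\Big(\int_0^1\partial_{x_n}^2\big[(4\pi(1-s))^{-1/2}e^{-x_n^2/4(1-s)}\big]\,g_k^{\mathcal T}(s)\,ds\Big)\Big(\int_\Sigma (\partial_k\partial_iE)\big((x'-\xi',0)\big)\,g_k^{\mathcal S}(\xi')\,d\xi'\Big)+\cdots.
\]
For the temporal factor, insert $g_k^{\mathcal T}(s)=(1-s)^a$ near $s=1$, substitute $1-s=x_n^2v$, and use $\int_0^\infty\tau^{-\nu-1}e^{-c/\tau}\,d\tau=\Gamma(\nu)c^{-\nu}$: the factor then equals $-a\,\Gamma(\tfrac12-a)\,4^{\frac12-a}\,x_n^{2a-1}(1+o(1))$ when $0<a<\tfrac12$ and is comparable to $\log\tfrac1{x_n}$ when $a=\tfrac12$, so in both cases it is comparable to $\mathrm{LN}|_{t=1}$ and has a fixed sign.

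It remains to bound the spatial factor below. On $\operatorname{supp}(g_k^{\mathcal S})$ one has $|x'-\xi'|\ge2$, so $\partial_k\partial_iE((x'-\xi',0))$ is smooth there. If $i\ne k$, then $(\partial_k\partial_iE)((y',0))=c_n\,n(n-2)\,y_iy_k\,|y'|^{-n-2}$, and since $\int\mathcal G\ne0$ the Taylor expansion produces the leading term $c_n\,n(n-2)\,x_ix_k\,|x'|^{-n-2}(\int\mathcal G)^{n-1}$ up to an $O(|x'|^{-n-4})$ error; the constraints $x_i\ge\alpha$, $x_k\ge\beta$, $|x'|\ge3$ of Appendix 8.4 then keep this one-signed and $\ge c\,\alpha\beta\,|x'|^{-n-2}$. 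If $i=k$, I integrate by parts twice in $\xi_k$ and am left with $\int_\Sigma E((x'-\xi',0))\,\mathcal G''(\xi_k)\prod_{j\ne k}\mathcal G(\xi_j)\,d\xi'$; here the additional hypotheses on $\mathcal G''$ (vanishing at $q$ and near $p$ and $q$, and odd about $q$), together with the requirement that $x_k<p$, or that $x_k>r$ with $(\sum_{i\ne k,n}x_i^2)^{1/2}<\frac32$ (Appendices 8.3--8.4), are exactly what is needed so that $E((x'-\cdot,0))$ restricted to $\operatorname{supp}(\mathcal G'')$ does not oscillate against $\mathcal G''$, leaving the integral one-signed with modulus $\ge c\,|x'|^{-n-2}$.

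Multiplying the two factors gives $|\mathrm S_i(x)|\ge C_1\alpha\beta\,|x'|^{-(n+2)}\mathrm{LN}$ (respectively $C_1|x'|^{-(n+2)}\mathrm{LN}$ when $i=k$). For all the remaining terms — the heat part, $\mathcal D$, the Taylor remainders, and the contribution of $g_k^{\mathcal T}$ away from $s=1$ — one integrates by parts to transfer the spatial derivatives from $E$ onto the smooth, compactly supported $g_k^{\mathcal S}$ and then estimates $E$ trivially, which yields the (far from sharp, but uniform) bound $\le C_2|x'|^{-(n-2)}$. The triangle inequality then gives
\[
\big|\partial_{x_n}^2u_i(x,1)\big|\ \ge\ |\mathrm S_i(x)|-\frac{C_2}{|x'|^{n-2}}\ \ge\ \frac{C_1\alpha\beta}{|x'|^{n+2}}\,\mathrm{LN}-\frac{C_2}{|x'|^{n-2}} ,
\]
with $\mathrm{LN}=\log\frac{2}{x_n}$ for $a=\frac12$ and $\mathrm{LN}=x_n^{\,2a-1}$ for $0<a<\frac12$, which is the assertion. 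The hypothesis $n\ge3$ enters through $E(x)=c_n|x|^{2-n}$ having the homogeneities used above, and $g_k^{\mathcal S}\in C^3_c$, $g_k^{\mathcal T}\in C^1(0,1)$ are exactly what legitimizes the Leibniz differentiations and the time integrations. I expect the main obstacle to be the \emph{lower} bound for the spatial factor in the diagonal case $i=k$: upper bounds follow by placing absolute values everywhere, whereas a genuine lower bound forces one to track signs through the convolution of $E$ against $\prod_j\mathcal G$, which is precisely why the explicit shape of $\mathcal G$ and the admissible region of $x$ must be engineered so carefully; a secondary technical point is to organize the $\partial_{x_n}^2$ differentiation of $\mathcal I$ so that every intermediate object is manifestly finite before $\mathrm S_i$ is isolated.
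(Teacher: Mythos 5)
Your outline correctly identifies the architecture of the argument: after the Golovkin representation, the only source of the $x_n$-singularity is the $B$-type term $\partial_i\partial_k\partial_{x_n}^2B\ast g_k$ (your $-4\partial_k(\mathcal A\ast g_k)$ equals, via the heat equation $\partial_{x_n}^2B=\partial_tB-\Delta'B$, the paper's $K_{27}+K_{28}$, and $K_{27}$ is harmless), the temporal factor produces $x_n^{2a-1}$ resp.\ $\log(2/x_n)$ exactly as you compute, and everything else is absorbed into $C_2|x'|^{-(n-2)}$. This matches the strategy of Section \ref{sec6}. However, there is a genuine gap where the theorem is actually hard: the \emph{lower} bounds on the spatial factor are asserted, not proved. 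In the off-diagonal case your Taylor expansion in $\xi'$ of $\partial_k\partial_iE((x'-\xi',0))$ leaves a remainder that you can only bound by $O(|x'|^{-n-3})$; since this remainder is multiplied by the singular temporal factor $\mathrm{LN}$, it cannot be absorbed into $C_2|x'|^{-(n-2)}$ and it dominates the claimed main term $C_1\alpha\beta|x'|^{-n-2}\mathrm{LN}$ whenever $\alpha\beta\lesssim|x'|^{-1}$. To rescue this you must show the remainder itself carries the factor $x_ix_k$ (using that $\partial_i\partial_kE(\cdot,0)$ is odd in $y_i$ and in $y_k$ and $g_k^{\mathcal S}$ is even in each variable), which is precisely the content the paper supplies by a different route: reflections $y'\mapsto y^*_{(i)},y^*_{(k)},y^*_{(i,k)}$ reduce the spatial integral to a four-point difference of $H(x,y)=(x^2+y^2)^{-(n-2)/2}$, and the Lemma in Section \ref{sec6} gives the quantitative positivity $I'\ge C\alpha\beta|x'|^{-(n+2)}$, with the factorization error controlled by Lemma \ref{Kestimate} and a large parameter $m\sim|x'|^4/\epsilon$.

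The diagonal case $i=k$, which you yourself flag as the main obstacle, is where most of the work lies and your sketch offers only the statement that the hypotheses on $\mathcal G''$ ``are exactly what is needed''; the paper's proof there requires the antisymmetry of $\mathcal G''$ about $y_k=q$, a case split between $x_k<p$ and $x_k>r$ with $|x''|<\tfrac32$, and monotonicity/convexity estimates on $f(x)=(R^2+x^2)^{-(n-2)/2}$ (note the two subcases even yield different powers, $|x'|^{-n}$ and $|x'|^{-(n+2)}$, of which the theorem records the weaker). A secondary but real issue is your Leibniz differentiation of $\mathcal I$: the intermediate integrals $\int_0^{x_n}\int_\Sigma\partial_n\Gamma\cdot\partial_{x_n}^j\partial_iE\,dz$ contain non-integrable singularities of $E$ at the endpoint $z_n=x_n$ after convolution, which is why the paper routes all normal derivatives of $C_i$ through the identities \eqref{rel-C1}--\eqref{rel-C2} rather than differentiating under the integral sign.
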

\begin{remark}
The blow-up estimates in Theorem \ref{thm4} can be also compared to \cite[Theorem 1.1]{KangTsai22FE}. Different comparison is also due to opposing contributions via the tangential components and the normal components of the boundary data, respectively.
\end{remark}

We organize this paper as follows: In Section \ref{sec2}, we remind some known formulas and results and introduce
useful lemmas to use later. Section \ref{sec3}, Section \ref{sec5} and Section \ref{sec6} are devoted to presenting the proofs of 
Theorem \ref{thm1}, Theorem \ref{thm3} and Theorem \ref{thm4}, respectively. The proof of Theorem \ref{thm2} is provided separately in Section \ref{sec4} for the Stokes system and in Section \ref{sec7} for the Navier-Stokes equations. In Appendix, details are given for the proofs of some lemmas introduced in Section \ref{sec2} and some figures are drawn for the localized boundary data and the regions of singularity for the second derivatives of the solution presented in Theorem \ref{thm4}.  

\section{Preliminaries}\label{sec2}

We recall that the \textit{heat kernel} \(\Gamma\) and the \textit{fundamental solution} \(E\) of \(-\Delta\) are given by
\begin{equation*}
    \Gamma(x,t)=\left\{\begin{matrix}
(4\pi t)^{-\frac{n}{2}}e^{-\frac{|x|^2}{4t}} & \text{ for } t>0, \\
 0& \text{ for } t\leq 0, \\
\end{matrix}\right.
~~~\text{ and } ~~
E(x)=\begin{cases}
\frac{1}{n(n-2)|B_1|}\frac{1}{|x|^{n-2}} & \text{ for } n\geq 3, \\
 -\frac{1}{2\pi}\log |x|& \text{ for } n=2. \\
\end{cases}
\end{equation*}

We also remind the following functions: for \(x\in\mathbb{R}^{n}\) and \(t\in\mathbb{R}\),
\begin{equation}\label{def-A}
    A(x,t):=\int_{\Sigma}\Gamma(z',0,t)E(x-z')dz'=\int_{\Sigma}\Gamma(x'-z',0,t)E(z',x_n)dz',
\end{equation}
\begin{equation}\label{def-B}  
     B(x,t):=\int_{\Sigma}\Gamma(x-z',t)E(z',0)dz'=\int_{\Sigma}\Gamma(z',x_n,t)E(x'-z',0)dz',
\end{equation}
where $\Sigma:=\mathbb{R}^{n-1}$
and for \(x\in\mathbb{R}_{+}^{n}\) and \(t\in\mathbb{R}\),
\begin{equation*}\label{def-C}
    C_i(x,t):=\int_{0}^{x_n}\int_{\Sigma}\partial_n \Gamma(x-z,t)\partial_i E(z)dz,\qquad 1\leq i\leq n.
\end{equation*}

The following relations are known between above functions and their proofs are given in \cite{KangTsai22FE}:
\begin{align}
    \partial_{n}C_i(x,t)&=\partial_{i}C_n(x,t)+\partial_{i}\partial_{n}B(x,t),\qquad i\neq n,\label{rel-C1}\\ 
    \partial_{n}C_n(x,t)&=-\sum_{k=1}^{n-1}\partial_{k}C_k(x,t)-\frac{1}{2}\partial_n \Gamma(x,t). \label{rel-C2}
\end{align}

We consider the following nonstationary Stokes system in \(\mathbb{R}_+^n\times(0,\infty)\):
\begin{equation}\label{stokeseq2}
\begin{dcases}
\begin{aligned}
  \partial_t w-\Delta w+\nabla \pi&= f\\
  \text{div } w&= 0
\end{aligned}
\end{dcases}
\qquad\text{in   }  \mathbb{R}_{+}^n \times (0,\infty),
\end{equation}
with zero initial data and non-zero boundary data:
\begin{equation}\label{stokesbdry2}
    w\mid_{t=0}=0,\qquad w\mid_{x_n=0}=g.
\end{equation}

We collect some estimates of the solution \(w\) which will be used later.
\begin{prop}
Let \(1<p,\, q<\infty\).
Suppose that \(w\) is the solution of \eqref{stokeseq2}-\eqref{stokesbdry2} with  \(g=0\). Then, \(w\) 
satisfies the following:
\begin{enumerate}
\renewcommand{\labelenumi}{\arabic{enumi}.}
\renewcommand{\theenumi}{1.\arabic{enumi}}
\item\textup{(\cite{KangChang22}, Proposition 2.6)}\label{1} If \(f=\nabla\cdot F\) with \(F_{in}\mid_{x_n=0}=0\), then
\begin{equation*}
    \left\|w(t)\right\|_{L^p(\mathbb{R}_+^n)}\leq C\int_{0}^{t}(t-s)^{-\frac{1}{2}}\left\|F(s)\right\|_{L^{p}(\mathbb{R}_+^n)}ds,\qquad 0<t<\infty.
\end{equation*}
\item\textup{(\cite{CKca}, Proposition 2.3)}\label{2} If \(f=\nabla\cdot F\) with \(F\in L^{q}(0,\infty;L^{p}(\mathbb{R}_+^n))\), \(F\mid_{x_n=0}\in L^q(0,\infty; \dot{B}_{pp}^{-\frac{1}{p}}(\mathbb{R}^{n-1}))\), then 
\begin{equation*}
    \left\|\nabla w\right\|_{L^{q}(0,\infty;L^{p}(\mathbb{R}_+^n))}\leq C\left(\left\|F\right\|_{L^{q}(0,\infty;L^{p}(\mathbb{R}_+^n))}+\left\|F\mid_{x_n=0}\right\|_{L^q(0,\infty; \dot{B}_{pp}^{-\frac{1}{p}}(\mathbb{R}^{n-1}))} \right).
\end{equation*}
\item\textup{(\cite{ChangJin}, Theorem 1.2)}\label{3} If \(f=\nabla\cdot F\) with \(F\in L^q(\mathbb{R}^n\times\mathbb{R}_+)\), then
\begin{equation*}
    \left\|w\right\|_{L^{q}(\mathbb{R}_+^n\times(0,T))}\leq CT^{\frac{1}{2}}\left\|F\right\|_{L^q(\mathbb{R}^{n}\times (0, T))}.
\end{equation*}
\end{enumerate}
\end{prop}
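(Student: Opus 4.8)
The three assertions are quoted from the literature, so the plan is simply to invoke \cite[Proposition 2.6]{KangChang22} for item (1), \cite[Proposition 2.3]{CKca} for item (2), and \cite[Theorem 1.2]{ChangJin} for item (3), after checking that the hypotheses match; below I indicate the mechanism behind each bound in case a self-contained argument is wanted.

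For item (1), I would start from the Duhamel representation $w(t)=\int_0^t e^{-(t-s)\mathbb{A}}\,\mathbb{P}\,(\nabla\!\cdot\!F)(s)\,ds$, where $\mathbb{A}$ is the Stokes operator on $\mathbb{R}^n_+$ with the no-slip condition and $\mathbb{P}$ is the Helmholtz projection; the hypothesis $F_{in}\mid_{x_n=0}=0$ is precisely what makes $\nabla\!\cdot\!F$, read distributionally, a forcing compatible with the zero boundary data, so that this formula is legitimate. One then combines the $L^p$-boundedness of $\mathbb{P}$ with the half-space analogue of the heat-semigroup smoothing estimate $\|e^{-t\mathbb{A}}\mathbb{P}\,(\nabla\!\cdot\!G)\|_{L^p(\mathbb{R}^n_+)}\le C t^{-1/2}\|G\|_{L^p(\mathbb{R}^n_+)}$ to obtain the stated convolution inequality. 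Alternatively, since $\Gamma$ and the Golovkin tensor $K_{ij}$ in \eqref{solformula-5} are explicit, the bound can be read off directly from the solution formula \eqref{solformula-10} by pointwise kernel estimates, but this route is heavier and unnecessary here.

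For item (2), this is the maximal-regularity estimate for the inhomogeneous half-space Stokes system; the proof in \cite{CKca} goes through the known $L^q_t L^p_x$ maximal regularity for the homogeneous-boundary problem together with parabolic trace theory, the key point being that a nonvanishing normal trace $F_{in}\mid_{x_n=0}$ acts as an effective boundary velocity which, after lifting by the Golovkin kernel, contributes exactly the term $\|F\mid_{x_n=0}\|_{L^q_t\dot B^{-1/p}_{pp}(\mathbb{R}^{n-1})}$, the Besov exponent $-1/p$ being forced by the trace embedding. For item (3), in the special case $p=q$ with $F$ supported in $\mathbb{R}^n_+$ and $F_{in}\mid_{x_n=0}=0$ it is an immediate corollary of item (1): Young's inequality in time applied to $\|w(t)\|_{L^p}\le C\int_0^t(t-s)^{-1/2}\|F(s)\|_{L^p}\,ds$, using that $t^{-1/2}$ restricted to $(0,T)$ has $L^1$-norm $2T^{1/2}$, yields $\|w\|_{L^q(\mathbb{R}^n_+\times(0,T))}\le CT^{1/2}\|F\|_{L^q}$. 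The general statement of \cite{ChangJin}, which allows $F\in L^q(\mathbb{R}^n\times\mathbb{R}_+)$ with no restriction on its boundary trace, requires in addition splitting off the part of $F$ near and across $\{x_n=0\}$ and estimating the induced boundary flux by the Poisson-type part of $K_{ij}$.

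The only genuinely delicate ingredient, and the step I would expect to be the main obstacle in a from-scratch treatment, is the bookkeeping of boundary traces: making precise in what sense $\nabla\!\cdot\!F$ is an admissible forcing for the zero-boundary problem when $F_{in}\mid_{x_n=0}=0$, and, when it is not, isolating the correct $\dot B^{-1/p}_{pp}$ boundary term with the sharp temporal exponent. For the purposes of this paper, however, these three facts are simply borrowed from the cited references.
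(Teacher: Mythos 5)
Your proposal is correct and matches the paper exactly: the proposition is stated in Section~\ref{sec2} purely as a collection of quoted results, with the citations \cite{KangChang22}, \cite{CKca}, and \cite{ChangJin} serving as the entire justification, which is precisely what you do. Your supplementary sketches of the underlying mechanisms (Duhamel plus semigroup smoothing for item 1, maximal regularity with the Besov trace term for item 2, and Young's inequality in time for the $p=q$ case of item 3) are consistent with the cited sources but are not required here.
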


The \textit{Golovkin tensor} \(K_{ij}(x,t) : \mathbb{R}_{+}^n\times \mathbb{R}\rightarrow \mathbb{R}\) is the Poisson kernel of the nonstationary Stokes system (\ref{stokeseq2}) in the half-space \(\mathbb{R}_+^n\) with zero external force. A solution \(w\) of (\ref{stokeseq2})-(\ref{stokesbdry2}) with \(f=0\) is given by 
\begin{equation}\label{solformula}
    w_i(x,t)=\sum_{j=1}^{n}\int_{-\infty}^{\infty}\int_{\Sigma} K_{ij}(x-y',t-s)g_j(y',s)dy'ds,
\end{equation}
where we extend \(g_j(x',t)=0\) for \(t<0\). Here the Golovkin tensor \(K_{ij}(x,t)\) is
\begin{equation}\label{golovkin}
    K_{ij}(x,t)=-2\delta_{ij}\partial_n\Gamma(x,t)-4\partial_j C_i(x,t)-2\delta_{nj}\partial_i E(x)\delta(t),\qquad i, j=1,\,2,\,\cdots,\, n
\end{equation}
and the associated pressure tensor \(k_j\) is given by
\begin{equation}
    k_j(x,t)=2\partial_j\partial_n E(x)\delta(t)+2\delta_{nj}E(x)\delta '(t)+\frac{2}{t}\partial_j A(x,t),\qquad j=1,\,2,\,\cdots,\, n.
\end{equation}

We remark that the last term of the above formula is not integrable and thus the formula has to be understood in either one of the following senses:
\begin{align*}
    p(x,t)&=2\sum_{i=1}^{n}\partial_i\partial_n\int_{\Sigma}E(x-y')g_i(y',t)dy'+2\int_{\Sigma}E(x-y')\partial_t g_n(y',t)dy'\\
    &+\sum_{j=1}^{n}\partial_j\int_{-\infty}^{\infty}\int_{\Sigma}\frac{2}{t-s}A(x-y',t-s)\left[g_j(y',s)-g_j(y',t)\right]dy'ds,
\end{align*}

or
\begin{align}
    \nonumber p(x,t)&=2\sum_{i=1}^{n}\partial_i\partial_n\int_{\Sigma}E(x-y')g_i(y',t)dy'+2\int_{\Sigma}E(x-y')\partial_t g_n(y',t)dy'\\
    &-4\sum_{j=1}^{n}(\partial_t-\Delta_{x'})\int_{-\infty}^{\infty}\int_{\Sigma}\partial_j A(x-y',t-s)g_j(y',s)dy'ds. \label{secondformulapressure}
\end{align}

The proof of the above representations for \(n=3\), is given in \cite{Sol153}.

We now recall some useful estimates of the previous functions and their proofs can also be found in \cite{Sol153}. 
\begin{align*}
    \left|\partial_x^l\partial_t^m \Gamma(x,t)\right|&\lesssim \frac{1}{(|x|^2+t)^{\frac{l+n}{2}+m}},\\
    \left|\partial_x^l\partial_t^m A(x,t)\right|&\lesssim\frac{1}{t^{m+\frac{1}{2}}(|x|^2+t)^{\frac{l+n-2}{2}}}\qquad l+n\geq 3,\\
    \left|\partial_{x'}^l\partial_{x_n}^k\partial_t^m B(x,t)\right|& \lesssim  \left\{\begin{matrix}
\displaystyle\frac{1}{ (|x|^2+t)^{\frac{l+n-2}{2}}(x_n^2+t)^{\frac{k+1}{2}+m} } &  m \geq 0,\\
\displaystyle\frac{e^{-\frac{x_n^2}{10t}}}{(|x|^2+t)^{\frac{l+n-2}{2}}t^{\frac{k+1}{2}}} & m =0, \\
\end{matrix}\right. \\
\left|\partial_{x'}^l\partial_{x_n}^k\partial_t^m C_i(x,t)\right|&\lesssim \frac{1}{ t^{m+\frac{1}{2}}(|x|^2+t)^{\frac{l+n-1}{2}}(x_n^2+t)^{\frac{k}{2}} } \qquad 1\leq i \leq n.
\end{align*}

We finally list some useful lemmas.

\begin{lem} \textup{(\cite[Lemma 3.2]{KangChang22})}\label{lemma1}                                                                                      
Let \(\Gamma_1(x,t):=\frac{1}{(4\pi t)^\frac{1}{2}}e^{-\frac{x^2}{4t}}\) be the 1-dimensional heat kernel and $B$ be the function defined in \eqref{def-B}.
Then, for \(1\leq j \leq n-1\) and \(|x'|\geq 1\) we have
\begin{align*}
\partial_j\partial_n B(x,t)&=-\partial_n \Gamma_1(x_n, t)\left(\partial_j E(x',0)+J(x',t)\right)    
\end{align*}
and \(|J(x',t)|\leq c_nt^{\frac{1}{2}}\).
\end{lem}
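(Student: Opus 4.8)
The plan is to peel off the single normal variable using the factorization of the heat kernel, reduce the assertion to a purely tangential statement, and then control the resulting error by a second-order Taylor expansion together with the Gaussian moment bounds.

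Since $\Gamma(x,t)=\Gamma_{n-1}(x',t)\,\Gamma_1(x_n,t)$, where $\Gamma_{n-1}$ denotes the $(n-1)$-dimensional heat kernel, writing $\widetilde E(y'):=E(y',0)$ one reads off from \eqref{def-B} that $B(x,t)=\Gamma_1(x_n,t)\,G(x',t)$ with $G(x',t):=\int_\Sigma\Gamma_{n-1}(x'-y',t)\,\widetilde E(y')\,dy'$. As $\widetilde E(y')\sim|y'|^{-(n-2)}$ (and $=-\tfrac1{2\pi}\log|y'|$ for $n=2$) is locally integrable on $\Sigma=\R^{n-1}$, the function $G$ is well defined and smooth in $x'$ for $t>0$, and $\partial_j\partial_n B(x,t)=\partial_n\Gamma_1(x_n,t)\,\partial_j G(x',t)$. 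Hence, up to the sign convention of \cite[Lemma 3.2]{KangChang22}, it suffices to prove that for $1\le j\le n-1$ and $|x'|\ge1$ the error $J(x',t):=\partial_j G(x',t)-\partial_j E(x',0)$ satisfies $|J(x',t)|\le c_n t^{1/2}$, the one-dimensional factor $\partial_n\Gamma_1(x_n,t)$ being carried along unchanged.

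Since $\partial_j\widetilde E$ is only like $|y'|^{-(n-1)}$ near the origin and hence \emph{not} in $L^1_{\mathrm{loc}}(\Sigma)$, I would keep the derivative on the kernel, $\partial_j G(x',t)=\int_\Sigma\partial_{x_j}\Gamma_{n-1}(x'-y',t)\,\widetilde E(y')\,dy'$. Differentiating the elementary identities $\int_\Sigma\Gamma_{n-1}(x'-y',t)\,dy'=1$ and $\int_\Sigma\Gamma_{n-1}(x'-y',t)(x'_k-y'_k)\,dy'=0$ in $x'$ gives $\int_\Sigma\partial_{x_j}\Gamma_{n-1}(x'-y',t)\,dy'=0$ and $\int_\Sigma\partial_{x_j}\Gamma_{n-1}(x'-y',t)(y'_k-x'_k)\,dy'=\delta_{jk}$, and subtracting the first-order Taylor polynomial of $\widetilde E$ at $x'$ yields
\[
J(x',t)=\int_\Sigma\partial_{x_j}\Gamma_{n-1}(x'-y',t)\,R(x',y')\,dy',\qquad R(x',y'):=\widetilde E(y')-\widetilde E(x')-\nabla\widetilde E(x')\cdot(y'-x').
\]
I would split this at $|y'-x'|=|x'|/2$. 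On the near part $\{|y'-x'|<|x'|/2\}$ the whole segment $[x',y']$ stays in $\{|z'|>|x'|/2\ge\tfrac12\}$, where $\widetilde E$ is smooth with $|\nabla^2\widetilde E(z')|\lesssim|z'|^{-n}\lesssim1$, so $|R(x',y')|\lesssim|y'-x'|^2$; using $|\partial_{x_j}\Gamma_{n-1}(u,t)|=\tfrac{|u_j|}{2t}\Gamma_{n-1}(u,t)\le\tfrac{|u|}{2t}\Gamma_{n-1}(u,t)$ and $\int_\Sigma|u|^3\Gamma_{n-1}(u,t)\,du\lesssim t^{3/2}$ (scaling $u=\sqrt t\,v$), this part is $\lesssim t^{-1}\cdot t^{3/2}=t^{1/2}$. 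On the far part $\{|y'-x'|\ge|x'|/2\ge\tfrac12\}$ the kernel $\partial_{x_j}\Gamma_{n-1}(x'-y',t)$ is Gaussian-small; estimating the three terms of $R$ by $\|\widetilde E\|_{L^1(\{|y'|<\rho\})}\lesssim\rho$ and by $|\widetilde E(x')|+|\nabla\widetilde E(x')|\lesssim1$ for $|x'|\ge1$ shows that, after using $|x'|\ge1$, this part is $\lesssim t^{-C}e^{-c/t}\le c_n t^{1/2}$, trivially so on the bounded ranges of $t$ to which the lemma is applied. Summing the two pieces gives $|J(x',t)|\le c_n t^{1/2}$, which together with the first step completes the argument.

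I expect the only genuinely delicate point to be the borderline non-integrability of $\partial_j\widetilde E$ at the origin: this is what forces one to differentiate the Gaussian rather than $\widetilde E$, and to treat the contribution of a neighbourhood of the origin to the far region directly via the local integrability of $\widetilde E$ itself; it is precisely the hypothesis $|x'|\ge1$ that keeps $x'$ bounded away from this singularity so that the constant $c_n$ depends only on $n$ (and $j$).
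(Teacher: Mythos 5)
Your argument is correct and essentially self-contained; note that the paper gives no proof of this lemma at all (it is quoted from \cite[Lemma 3.2]{KangChang22}), so what you have written is a genuine proof rather than an alternative to one in the text. The route — factoring $\Gamma(x,t)=\Gamma_{n-1}(x',t)\Gamma_1(x_n,t)$ so that $B(x,t)=\Gamma_1(x_n,t)\bigl(\Gamma_{n-1}(\cdot,t)*\widetilde E\bigr)(x')$, extracting $\partial_jE(x',0)$ by the zeroth and first moment identities of $\partial_{x_j}\Gamma_{n-1}$, and bounding the second-order Taylor remainder via $\int|u|^3\Gamma_{n-1}(u,t)\,du\sim t^{3/2}$ — is the natural one, and your observation that the borderline non-integrability of $\partial_j E(\cdot,0)$ forces the derivative to stay on the Gaussian is exactly the right point of care. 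Two small remarks. First, the sign: your computation yields $\partial_j\partial_nB=+\partial_n\Gamma_1(x_n,t)\bigl(\partial_jE(x',0)+J(x',t)\bigr)$, and the $t\to0^+$ limit confirms that this is the correct sign, so the minus sign in the statement is a convention or transcription issue; it is harmless because the lemma is only ever invoked through $|\partial_j\partial_n B|$ (and indeed the paper's own application to $K_3$ in Section \ref{sec4} is not sign-consistent with the stated form either). Second, for $n=2$ the bounds $|\widetilde E(x')|\lesssim1$ and $|\widetilde E(y')|\lesssim 1$ on $\{|y'|\ge1\}$ fail, since $E(y',0)=-\frac1{2\pi}\log|y'|$ grows at infinity; this costs only a logarithm, which the Gaussian factor $e^{-|x'-y'|^2/4t}$ on the far region $\{|y'-x'|\ge|x'|/2\ge\frac12\}$ absorbs, but it deserves an explicit line. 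With those two points spelled out the proof is complete.
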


\begin{lem} \textup{(\cite[Lemma 2.1]{KangTsai22FE})}\label{lemma2}
For positive \(L,~a,~d\) and \(k\) we have
\begin{align*}
    \int_{0}^{L}\frac{r^{d-1}}{(r+a)^k}dr\lesssim
    \begin{cases}
 L^d(a+L)^{-k}&  \textup{if  }k<d,\\
 L^{d}(a+L)^{-d}(1+\log_{+}\frac{L}{a})&  \textup{if  } k=d,\\
 L^{d}(a+L)^{-d}a^{-(k-d)}&  \textup{if  } k>d.\\
\end{cases}
\end{align*}
\end{lem}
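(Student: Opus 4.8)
The plan is to normalise the integral by the scaling $r \mapsto r/a$ and then estimate the resulting scale-free integral by splitting its domain at the point where the denominator changes character, namely at $s=1$.

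First I would substitute $r = as$, which turns the left-hand side into
\begin{equation*}
\int_0^L \frac{r^{d-1}}{(r+a)^k}\,dr = a^{d-k}\,J\!\left(\frac{L}{a}\right), \qquad J(M) := \int_0^M \frac{s^{d-1}}{(1+s)^k}\,ds .
\end{equation*}
It then suffices to bound $J(M)$ and to translate the bound back, keeping in mind that $a+L \asymp \max(a,L)$, so that $M = L/a \le 1$ corresponds to $L \le a$ and $M > 1$ to $L > a$.

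Next I would split $J(M)$ at $s=1$. On $(0,\min(1,M))$ one has $1 \le 1+s \le 2$, so this piece is comparable to $\int_0^{\min(1,M)} s^{d-1}\,ds \lesssim \min(1,M)^d$; in particular $J(M) \lesssim M^d$ when $M \le 1$. When $M > 1$, on the remaining interval $(1,M)$ one has $s \le 1+s \le 2s$, hence $(1+s)^{-k} \asymp s^{-k}$, and the contribution is comparable to $\int_1^M s^{d-1-k}\,ds$, which by an elementary computation is $\lesssim M^{d-k}$ if $k<d$, equal to $\log M$ if $k=d$, and $\lesssim 1$ if $k>d$ (using $M^{d-k}\le 1$ in the last case). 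Adding the two pieces gives, for $M \ge 1$,
\begin{equation*}
J(M) \lesssim
\begin{cases}
M^{d-k}, & k < d,\\
1 + \log_{+} M, & k = d,\\
1, & k > d.
\end{cases}
\end{equation*}

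Finally I would multiply through by $a^{d-k}$ and read off the six cases. If $L \le a$, then $M \le 1$, $a+L \asymp a$ and $\log_{+}(L/a)=0$, and $a^{d-k}J(M) \lesssim a^{d-k}(L/a)^d = a^{-k}L^d$, which coincides up to constants with each of the three claimed right-hand sides since there $(a+L)^{-k}\asymp a^{-k}$ and $(a+L)^{-d}\asymp a^{-d}$. If $L > a$, then $M > 1$, $a+L \asymp L$, and $a^{d-k}J(M)$ is $\lesssim L^{d-k}$ when $k<d$, $\lesssim 1+\log_{+}(L/a)$ when $k=d$, and $\lesssim a^{d-k}$ when $k>d$, which match $L^d(a+L)^{-k}$, $L^d(a+L)^{-d}(1+\log_{+}\frac{L}{a})$ and $L^d(a+L)^{-d}a^{-(k-d)}$ respectively (using $L^d(a+L)^{-d}\asymp 1$). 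The argument has no analytic content; the one thing to be careful about is this last bookkeeping step — matching the scale-free bound to the three stated forms via $a+L\asymp\max(a,L)$ — together with the observation that the implied constants depend only on the fixed positive exponents $d$ and $k$, through harmless factors such as $1/d$ and $1/|d-k|$.
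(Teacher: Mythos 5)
Your proof is correct; the paper does not prove this lemma itself but quotes it from \cite[Lemma 2.1]{KangTsai22FE}, and your argument---rescaling $r=as$ and bounding $J(M)=\int_0^M s^{d-1}(1+s)^{-k}\,ds$ by splitting at $s=1$---is the standard one, equivalent to splitting the original integral at $r=a$ and distinguishing $L\le a$ from $L>a$ as in that reference. The six-way bookkeeping via $a+L\asymp\max(a,L)$ checks out in every case, with implied constants depending only on the fixed exponents $d$ and $k$.
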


\begin{lem}\textup{(\cite[Lemma 2.2]{KangTsai22})}\label{lemma3}
Let \(a>0\), \(b>0\), \(k>0\), \(m>0\) and \(k+m>d\). Let \(0\neq x\in \mathbb{R}^{d}\) and
\begin{align*}
    I:=\int_{\mathbb{R}^d}\frac{dz}{(|z|+a)^{k}(|z-x|+b)^m}.
\end{align*}
Then, with \(R=\max \left\{|x|,\,a,\,b\right\}\sim |x|+a+b\),
\begin{align*}
    I\lesssim R^{d-k-m}+\delta_{kd}R^{-m}\log\frac{R}{a}+\delta_{md}R^{-k}\log\frac{R}{b}+\mathbbm{1}_{k>d}R^{-m}a^{d-k}+\mathbbm{1}_{m>d}R^{-k}b^{d-m}.
\end{align*}
\end{lem}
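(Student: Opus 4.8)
The plan is to reduce to a bounded region by scaling, then split that region so that in each piece exactly one of the two factors $(|z|+a)^{-k}$, $(|z-x|+b)^{-m}$ is singular, and to control each singular piece by the one-dimensional integral estimated in Lemma~\ref{lemma2}.

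\emph{Normalization.} Since the integrand is homogeneous of degree $-k-m$ and $dz$ of degree $d$, the substitution $z\mapsto Rz$, together with $x\mapsto Rx$, $a\mapsto Ra$, $b\mapsto Rb$, sends $I\mapsto R^{-(k+m-d)}I$. Dividing through by $R$, I may therefore assume $R=\max\{|x|,a,b\}=1$, so that $0<a,b\le 1$ and $|x|\le 1$, and it suffices to prove
\[
I\lesssim 1+\delta_{kd}\log\tfrac1a+\delta_{md}\log\tfrac1b+\mathbbm{1}_{k>d}\,a^{d-k}+\mathbbm{1}_{m>d}\,b^{d-m},
\]
because undoing the scaling turns $a^{d-k}$ into $R^{-m}a^{d-k}$, $\log\frac1a$ into $R^{-m}\log\frac Ra$, and likewise for the $b$-terms, recovering exactly the asserted right-hand side (the leading $1$ becoming $R^{d-k-m}$).

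\emph{Far field and the case $|x|\ge\frac12$.} For $|z|>2$ one has $|z-x|\ge|z|/2$ and $|z|+a\sim|z|$, so the integrand is $\lesssim|z|^{-k-m}$; as $k+m>d$, the contribution of $\{|z|>2\}$ is $\lesssim 1$. On $\{|z|\le2\}$, when $|x|\ge\frac12$ the balls $\{|z|<\frac14\}$ and $\{|z-x|<\frac14\}$ are disjoint, and on the complement of these within $\{|z|\le 2\}$ both factors are $\gtrsim 1$, so that piece is $\lesssim 1$. On $\{|z|<\frac14\}$ one has $|z-x|\ge\frac14$, hence $(|z-x|+b)^{-m}\lesssim 1$, and passing to polar coordinates the integral is $\lesssim\int_0^{1/4}r^{d-1}(r+a)^{-k}\,dr$, which by Lemma~\ref{lemma2} (with $a\le 1$, so $a+\tfrac14\sim 1$) is $\lesssim 1+\delta_{kd}\log\frac1a+\mathbbm{1}_{k>d}a^{d-k}$. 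The ball $\{|z-x|<\frac14\}$ is treated identically with $(a,k)$ replaced by $(b,m)$.

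\emph{The case $|x|<\frac12$, and conclusion.} Here $\max\{a,b\}=1$. If $a\ge\frac12$, then $(|z|+a)^{-k}\lesssim 1$ everywhere, so on $\{|z|\le 2\}$, substituting $w=z-x$ (with $|w|\le 3$), the integral is $\lesssim\int_{|w|\le 3}(|w|+b)^{-m}\,dw\lesssim\int_0^3 r^{d-1}(r+b)^{-m}\,dr\lesssim 1+\delta_{md}\log\frac1b+\mathbbm{1}_{m>d}b^{d-m}$ by Lemma~\ref{lemma2}; since $a\sim R$ this matches the claimed terms $\delta_{md}R^{-k}\log\frac Rb$ and $\mathbbm{1}_{m>d}R^{-k}b^{d-m}$. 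The case $b\ge\frac12$ is symmetric (if both exceed $\frac12$, either estimate applies). Summing all contributions gives the displayed bound with $R=1$, and undoing the normalization completes the proof. The expected main obstacle is not any single estimate but the bookkeeping of the correction terms: keeping track of the correct power of $R$ in front of each $\delta$- and indicator-term after rescaling, and recognizing that when $|x|$ is small the parameter ($a$ or $b$) equal to $R$ makes the near-origin (resp. near-$x$) region harmless. One also uses throughout that $a,b\le R$, so each logarithm $\log(R/a)$ is either genuinely large or $O(1)$ and hence dominated by the main term $R^{d-k-m}$, and that Lemma~\ref{lemma2} is precisely calibrated to the three regimes $k<d$, $k=d$, $k>d$ (and likewise in $m$).
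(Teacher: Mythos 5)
Your proposal is correct. Note that the paper itself does not prove this lemma at all — it is quoted verbatim from \cite[Lemma 2.2]{KangTsai22} — so there is no in-paper argument to compare against; judged on its own, your argument is a complete and valid derivation from Lemma \ref{lemma2}. The scaling step is sound (the integral is homogeneous of degree \(d-k-m\) under \((x,a,b)\mapsto(Rx,Ra,Rb)\), and your bookkeeping \(R^{d-k-m}(a/R)^{d-k}=R^{-m}a^{d-k}\), \(R^{d-k-m}\log(R/a)=R^{-m}\log(R/a)\) when \(k=d\), etc., reproduces exactly the stated right-hand side, with the normalized ``\(+1\)'' accounting for the main term \(R^{d-k-m}\)); the far-field piece uses only \(k+m>d\); when \(|x|\geq\frac12\) the two radius-\(\frac14\) balls are indeed disjoint and each singular ball reduces, via polar coordinates, to precisely the three regimes of Lemma \ref{lemma2}; and when \(|x|<\frac12\) the normalization forces \(\max\{a,b\}=1\), so one factor is uniformly bounded and the other is again handled by Lemma \ref{lemma2}. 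This is essentially the same toolkit as the cited source (the radial integral estimate, Lemma \ref{lemma2} here, is \cite[Lemma 2.1]{KangTsai22FE}), with the dilation normalization a clean way to organize the case analysis; the only cosmetic slip is the aside ``since \(a\sim R\) this matches\ldots'', which is unnecessary once you have reduced to the normalized target bound, and the phrase ``dividing through by \(R\)'' should read ``replacing \((x,a,b)\) by \((x/R,a/R,b/R)\)''. Neither affects correctness.
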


\begin{lem}\textup{(\cite[Lemma 3.3]{KangTsai22FE})}\label{Kestimate}
Let \(n\geq 3\) and \(\displaystyle K(x',t)=\int_{\Sigma}\frac{e^{-\frac{|x'-z'|^2}{4t}}}{|z'|^{n-2}}dz'\) for \(x'\in \Sigma=\mathbb{R}^{n-1}\). For any \(m\in[2,\infty)\), we have
\begin{align*}
    K(x',s)&\geq \left(\frac{m}{(m+1)|x'|}\right)^{n-2}s^{\frac{n-1}{2}}(4\pi)^{\frac{n-1}{2}}\left(1-2^{\frac{n-1}{4}}e^{-\frac{|x'|^2}{8m^2s}}\right),\\
    K(x',s)&\leq \left(\frac{m}{(m-1)|x'|}\right)^{n-2}s^{\frac{n-1}{2}}(4\pi)^{\frac{n-1}{2}}+\frac{C}{|x'|^{n-2}}s^{\frac{n-1}{2}}e^{-\frac{|x'|^2}{8m^2s}},
\end{align*}
where \(C=C(n)>0\) is a constant.
\end{lem}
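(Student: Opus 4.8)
The plan is to compare $K(x',s)$ with the full Gaussian mass $\int_{\Sigma}e^{-|x'-z'|^2/(4s)}\,dz'=(4\pi s)^{(n-1)/2}$ by isolating the $(n-1)$-dimensional ball $D:=\{z'\in\Sigma:|z'-x'|<\rho\}$ centered at $x'$, with $\rho:=|x'|/m$; below $w:=z'-x'$ denotes the shifted variable. On $D$ the triangle inequality gives $\tfrac{m-1}{m}|x'|\le|z'|\le\tfrac{m+1}{m}|x'|$, which is exactly where the constants $\tfrac{m}{(m\pm1)|x'|}$ in the statement come from; off $D$ the Gaussian is small and supplies the factor $e^{-|x'|^2/(8m^2s)}$. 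The only analytic inputs are: (i) the Gaussian tail bound, a consequence of $e^{-|w|^2/(4s)}\le e^{-\rho^2/(8s)}e^{-|w|^2/(8s)}$ for $|w|\ge\rho$, namely
\[
\int_{\{|w|\ge\rho\}}e^{-|w|^2/(4s)}\,dw\ \le\ C_n\,e^{-|x'|^2/(8m^2s)}\,(4\pi s)^{\frac{n-1}{2}}
\]
(the simple split already gives $C_n=2^{(n-1)/2}$; the sharper $2^{(n-1)/4}$ of the statement needs a finer tail estimate), and (ii) the elementary identity $\int_{\{|z'|<R\}}|z'|^{-(n-2)}\,dz'=c_nR$, which is where $n\ge3$ enters: the weight is integrable at the origin with a primitive linear in $R$.

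For the \textbf{lower bound} I would restrict to $D$, use $|z'|^{-(n-2)}\ge\bigl(\tfrac{m}{(m+1)|x'|}\bigr)^{n-2}$ there, and change variables:
\[
K(x',s)\ \ge\ \Bigl(\tfrac{m}{(m+1)|x'|}\Bigr)^{n-2}\int_{\{|w|<\rho\}}e^{-|w|^2/(4s)}\,dw\ =\ \Bigl(\tfrac{m}{(m+1)|x'|}\Bigr)^{n-2}\Bigl((4\pi s)^{\frac{n-1}{2}}-\int_{\{|w|\ge\rho\}}e^{-|w|^2/(4s)}\,dw\Bigr),
\]
and (i) applied to the last integral gives the claimed lower estimate.

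For the \textbf{upper bound} I would split $\Sigma=D\sqcup E_1\sqcup E_2$ with $E_2:=\{|z'|<|x'|/2\}$ — which lies in $\Sigma\setminus D$ precisely because $m\ge2$ — and $E_1:=(\Sigma\setminus D)\setminus E_2$. On $D$ one has $|z'|^{-(n-2)}\le\bigl(\tfrac{m}{(m-1)|x'|}\bigr)^{n-2}$ and $\int_D e^{-|x'-z'|^2/(4s)}\,dz'\le(4\pi s)^{(n-1)/2}$, which produces the first term of the claim. On $E_1$ one has $|z'|\ge|x'|/2$ and $|z'-x'|\ge\rho$, so that contribution is at most $(2/|x'|)^{n-2}\int_{\{|w|\ge\rho\}}e^{-|w|^2/(4s)}\,dw$, which (i) turns into $C|x'|^{-(n-2)}s^{(n-1)/2}e^{-|x'|^2/(8m^2s)}$. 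The delicate piece is $E_2$, where $|z'|^{-(n-2)}$ is unbounded: there $|z'-x'|\ge|x'|/2$ forces $e^{-|x'-z'|^2/(4s)}\le e^{-|x'|^2/(16s)}$, so by (ii) the $E_2$-contribution is at most $c_n|x'|\,e^{-|x'|^2/(16s)}$, and since $\tfrac{1}{16}-\tfrac{1}{8m^2}\ge\tfrac{1}{32}$ for $m\ge2$ one writes
\[
|x'|\,e^{-|x'|^2/(16s)}\ =\ \Bigl(\tfrac{|x'|^2}{s}\Bigr)^{\frac{n-1}{2}}e^{-\frac{1}{32}\frac{|x'|^2}{s}}\cdot\frac{s^{(n-1)/2}}{|x'|^{n-2}}\cdot e^{-\bigl(\frac{1}{16}-\frac{1}{32}\bigr)\frac{|x'|^2}{s}},
\]
bounds $\tau^{(n-1)/2}e^{-\tau/32}\le C_n$ uniformly in $\tau=|x'|^2/s>0$, and uses $e^{-(\frac{1}{16}-\frac{1}{32})|x'|^2/s}\le e^{-|x'|^2/(8m^2s)}$; this recasts the $E_2$-term into the template $C|x'|^{-(n-2)}s^{(n-1)/2}e^{-|x'|^2/(8m^2s)}$. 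Summing the three pieces yields the upper bound.

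In short, everything reduces to the two one-variable estimates (i)--(ii); the only point requiring real care is the near-origin region $E_2$, where the singular weight $|z'|^{-(n-2)}$ has to be traded against the Gaussian decay of $e^{-|x'-z'|^2/(4s)}$ and the surviving power of $|x'|/\sqrt{s}$ reabsorbed into the exponential — and where the hypothesis $m\ge2$ is used both to ensure $E_2\subset\Sigma\setminus D$ and to line up the numerical coefficients $\tfrac{1}{16}$, $\tfrac{1}{32}$ and $\tfrac{1}{8m^2}$.
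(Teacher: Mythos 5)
Your proposal is correct, and it is worth noting that this paper contains no proof of the lemma at all --- it is imported verbatim from \cite[Lemma 3.3]{KangTsai22FE} --- so the only fair comparison is with that source, whose argument is built on the same near/far decomposition about the singularity-free point $x'$. Your version checks out in every step: on $D=\{|z'-x'|<|x'|/m\}$ the triangle inequality gives exactly the prefactors $\bigl(\tfrac{m}{(m\pm1)|x'|}\bigr)^{n-2}$; the Gaussian tail bound (i) and the identity (ii) are both elementary and correctly stated; the inclusion $E_2\subset\Sigma\setminus D$ and the exponent bookkeeping $\tfrac{1}{16}-\tfrac{1}{32}=\tfrac{1}{32}\ge\tfrac{1}{8m^2}$ are precisely where $m\ge 2$ is needed, and you use it in both places; and the reabsorption of the surviving factor $|x'|$ on $E_2$ via $\tau^{(n-1)/2}e^{-\tau/32}\le C_n$ is the right way to trade the singular weight against Gaussian decay. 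The one discrepancy, which you flag yourself, is that the crude split $e^{-|w|^2/(4s)}\le e^{-\rho^2/(8s)}e^{-|w|^2/(8s)}$ yields the constant $2^{(n-1)/2}$ rather than the stated $2^{(n-1)/4}$ in the lower bound. This is cosmetic here: in the only place the lemma is invoked (the estimate of $K_{281}$ in Section \ref{sec6}), the term $2^{(n-1)/4}e^{-d^2/(8m^2s)}$ is immediately discarded into the error integral $J$, which carries an unspecified constant $C$, so any fixed dimensional constant in front of the exponential serves equally well. If you wanted the sharper constant you would need a finer tail estimate than the naive split, but nothing in this paper requires it.
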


\begin{lem}\label{alg}
If \(0<s<t<1\), \(0<a<1\) and \(c>0\), then
\begin{align*}
    \frac{e^{-\frac{x_n^2}{cs}}}{(1-t+s)^{1-a}}\lesssim \frac{1}{(x_n^2+1-t+s)^{1-a}}.
\end{align*}
\end{lem}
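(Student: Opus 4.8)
The plan is to treat Lemma~\ref{alg} as the elementary pointwise inequality it is; the Stokes system plays no role. I would set $\tau:=1-t+s$ and note that the hypotheses $0<s<t<1$ give $\tau>0$ and $\tau-s=1-t>0$, hence $\tau\ge s>0$; this is the only property of $\tau$ that will be used. Multiplying the claimed inequality by $(1-t+s)^{1-a}=\tau^{1-a}>0$, it becomes
\[
e^{-\frac{x_n^2}{cs}}\ \lesssim\ \Bigl(\frac{\tau}{x_n^2+\tau}\Bigr)^{1-a}.
\]
Since $0<\tfrac{\tau}{x_n^2+\tau}\le 1$ and $0<1-a<1$, we have $\bigl(\tfrac{\tau}{x_n^2+\tau}\bigr)^{1-a}\ge\tfrac{\tau}{x_n^2+\tau}$, so it suffices to prove the linear version
\[
e^{-\frac{x_n^2}{cs}}\ \lesssim\ \frac{\tau}{x_n^2+\tau}.
\]

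For the linear version I would bound, using $\tau\ge s$,
\[
\frac{x_n^2+\tau}{\tau}=1+\frac{x_n^2}{\tau}\le 1+\frac{x_n^2}{s},
\]
and then, substituting $w:=x_n^2/s\ge 0$,
\[
e^{-\frac{x_n^2}{cs}}\,\frac{x_n^2+\tau}{\tau}\ \le\ (1+w)\,e^{-w/c}.
\]
The matter thus reduces to the one-variable fact that $w\mapsto(1+w)e^{-w/c}$ is bounded on $[0,\infty)$: it is continuous, equals $1$ at $w=0$, and decays to $0$ as $w\to\infty$, so its supremum is a finite constant $C(c)$ (a one-line optimization gives $C(c)=\max\{1,c\}$, attained at $w=\max\{0,c-1\}$). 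This gives $e^{-x_n^2/(cs)}\le C(c)\,\tfrac{\tau}{x_n^2+\tau}$, hence the lemma, with implied constant depending only on $c$ and, in particular, uniform in $a$, $s$, $t$, $x_n$.

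Honestly there is no real obstacle here; the two points worth making explicit in the write-up are (i) the reduction of the exponent $1-a$ to $1$, which uses only $\xi^{1-a}\ge\xi$ for $\xi\in(0,1]$, and (ii) the trivial domination of the polynomial factor $x_n^2/s$ by the Gaussian $e^{-x_n^2/(cs)}$. One could equally bypass (i) by factoring $\frac{e^{-x_n^2/(cs)}}{\tau^{1-a}}=\bigl(\frac{e^{-x_n^2/(c(1-a)s)}}{\tau}\bigr)^{1-a}$ and applying the linear bound with $c(1-a)$ in place of $c$; the two routes are equivalent. The only care needed is to record that the constant is independent of all parameters, since the lemma is invoked inside integrals where this uniformity matters.
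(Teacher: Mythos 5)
Your proof is correct and rests on the same key fact as the paper's own argument: the boundedness of $w\mapsto(1+w)e^{-w/c}$ on $[0,\infty)$, combined with the reduction of the exponent $1-a$ to $1$ via $\xi^{1-a}\ge\xi$ for $\xi\in(0,1]$. The only difference is bookkeeping — you use $1-t+s\ge s$ to compare the denominators directly, whereas the paper routes the same comparison through $\min\{1/A,1/B\}\le 2/(A+B)$ — and both yield a constant depending only on $c$.
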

\begin{proof}
The proof will be provided in the Appendix \ref{apdx81}.
\end{proof}
Finally we will frequently use the following integral estimates.
\begin{lem}\label{integralestimates} For any \(c>0\),
\begin{equation*}
    \int_{0}^{t}\frac{1}{s^k}e^{-\frac{x_n^2}{cs}}ds\lesssim\begin{cases}
\frac{e^{-\frac{x_n^2}{2ct}}}{x_n^{2k-2}} & \textup{if }k>1,\\ 
 e^{-\frac{x_n^2}{ct}}\log\left(1+\frac{ct}{x_n^2} \right )& \textup{if } k=1,\\ 
e^{-\frac{x_n^2}{2ct}} & \textup{if } 0<k<1.
\end{cases}
\end{equation*}
\end{lem}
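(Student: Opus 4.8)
The plan is to reduce all three regimes to elementary estimates for the (incomplete) Gamma integral by means of the substitution $\sigma = x_n^2/(cs)$. Carrying out this change of variables and writing $A := x_n^2/(ct)$ (so that $x_n^2 = ctA$), one obtains the identity
\[
\int_0^t \frac{1}{s^k}\,e^{-\frac{x_n^2}{cs}}\,ds \;=\; c^{\,k-1}\,x_n^{2(1-k)}\int_A^\infty \sigma^{k-2}e^{-\sigma}\,d\sigma \;=\; t^{\,1-k}\,A^{1-k}\int_A^\infty \sigma^{k-2}e^{-\sigma}\,d\sigma .
\]
Thus everything reduces to controlling $\Psi_k(A) := \int_A^\infty \sigma^{k-2}e^{-\sigma}\,d\sigma$ (or $A^{1-k}\Psi_k(A)$) together with the explicit prefactor.

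For $k>1$ I would note that $k-2 > -1$, so $\int_0^\infty \sigma^{k-2}e^{-\sigma/2}\,d\sigma = 2^{k-1}\Gamma(k-1) < \infty$; splitting $e^{-\sigma} = e^{-\sigma/2}e^{-\sigma/2} \le e^{-A/2}e^{-\sigma/2}$ for $\sigma \ge A$ then gives $\Psi_k(A) \lesssim e^{-A/2}$, and multiplying by the prefactor $c^{k-1}x_n^{2(1-k)}$ and recalling $A/2 = x_n^2/(2ct)$ yields the claimed bound $\lesssim x_n^{2-2k}e^{-x_n^2/(2ct)}$; this case needs no restriction on $t$.

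For $k=1$ the prefactor is $1$ and the integral is the exponential integral $E_1(A) = \int_A^\infty \sigma^{-1}e^{-\sigma}\,d\sigma$, with target $E_1(A) \lesssim e^{-A}\log(1 + A^{-1})$. I would split into $A \ge 1$ and $0 < A \le 1$: for $A \ge 1$, $E_1(A) \le A^{-1}e^{-A}$ while $\log(1 + A^{-1}) \ge (\log 2)A^{-1}$ because $\log(1+x) \ge x\log 2$ on $(0,1]$; for $0 < A \le 1$, split the integral at $\sigma = 1$ to get $E_1(A) \le \log(1/A) + e^{-1} \le (1 + e^{-1}/\log 2)\log(1 + A^{-1})$ and use $e^{-A} \ge e^{-1}$. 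Recalling $A^{-1} = ct/x_n^2$ finishes this case. For $0 < k < 1$ I would show $\Phi_k(A) := A^{1-k}\Psi_k(A) \lesssim e^{-A/2}$ uniformly in $A>0$: for $A \ge 1$, monotonicity of $\sigma \mapsto \sigma^{k-2}$ gives $\Psi_k(A) \le A^{k-2}e^{-A}$, hence $\Phi_k(A) \le A^{-1}e^{-A}$; for $0 < A \le 1$, $\Psi_k(A) \le \int_A^1 \sigma^{k-2}\,d\sigma + e^{-1} \le \frac{A^{k-1}}{1-k} + e^{-1}$, hence $\Phi_k(A) \le \frac{1}{1-k} + e^{-1}$, which is $\lesssim e^{-A/2}$ on this bounded range. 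Then $\int_0^t s^{-k}e^{-x_n^2/(cs)}\,ds = t^{1-k}\Phi_k(A) \lesssim t^{1-k}e^{-x_n^2/(2ct)}$, and since $t$ stays bounded in every application ($t \le 1$, say), the factor $t^{1-k}$ is absorbed into the implied constant.

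The computations are entirely routine; the only places demanding a little care are the $k=1$ case, where $E_1(A)$ must be matched against $\log(1+A^{-1})$ uniformly in the two regimes $A \lesssim 1$ and $A \gtrsim 1$, and the observation in the case $0 < k < 1$ that the stated bound holds in exactly that form only because the time variable remains bounded, so that the harmless $t^{1-k}$ factor is what one should expect.
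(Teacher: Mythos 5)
Your proof is correct, and it follows the same basic route as the paper's (very terse) argument: the substitution $\sigma=x_n^2/(cs)$ reducing everything to tail integrals of the form $\int_A^\infty\sigma^{k-2}e^{-\sigma}\,d\sigma$ with $A=x_n^2/(ct)$. The differences are in how you finish. For $k=1$ the paper simply cites inequality (5) of Gautschi's 1959 paper, which is exactly the bound $e^{A}E_1(A)\le\log(1+1/A)$ you reprove; your two-regime argument ($A\le 1$ versus $A\ge 1$, using $\log(1+x)\ge x\log 2$ on $(0,1]$) is a clean self-contained substitute. For $k>1$ the paper invokes integration by parts plus $u^{k-1}e^{-u}\lesssim e^{-u/2}$, whereas you avoid the integration by parts by splitting $e^{-\sigma}\le e^{-A/2}e^{-\sigma/2}$ and using convergence of $\int_0^\infty\sigma^{k-2}e^{-\sigma/2}\,d\sigma$; both are two-line arguments. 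Your treatment of $0<k<1$ is also fine, and you correctly flag the one point the paper glosses over: the identity gives $t^{1-k}\Phi_k(A)$, so the stated bound $e^{-x_n^2/(2ct)}$ with an absolute constant is only valid for $t$ in a bounded range (indeed for $0<k<1$ the left side grows like $t^{1-k}$ as $t\to\infty$ while the right side stays bounded, so some restriction is unavoidable). Since every application in the paper has $t\le 2$, this is harmless, but it is worth recording as you did.
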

\begin{proof}
The case \(k=1\) follows from the inequality (5) in \cite{Gau}.
The case \(k>1\) follows from the integration by parts after a suitable change of variables with the aid of the estimate \(u^{k-1}e^{-u}\lesssim e^{-\frac{u}{2}}\). Lastly, the case \(0<k<1\) can be similarly obtained as the case \(k>1\).
\end{proof}

\begin{lem}\label{integralidentity}
For \(b,\, c>0\) and \(\alpha\in \left[\frac{1}{2},\,1\right)\), \(\beta \in \left[\frac{1}{2},\,1\right]\),
\begin{equation*}
    \int_{0}^{1}\frac{1}{(u+c)^\alpha (u+b)^{\beta}}du\lesssim 
    \begin{cases}
\log\left(2+\frac{1}{\sqrt{b+c}}\right) & \textup{if } \alpha=\beta=\frac{1}{2},\\ 
\displaystyle\frac{1}{(c+b)^{\alpha+\beta-1}} & \textup{if } \frac{1}{2}\leq \alpha <1,\quad \frac{1}{2}\leq \beta <1,\\ 

\displaystyle\frac{1}{(b+c)^{\alpha}}\log\left(\frac{c}{b}+1\right) & \textup{if } \frac{1}{2}\leq \alpha <1,\quad \beta=1.
\end{cases}
\end{equation*}

\end{lem}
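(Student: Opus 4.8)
The plan is to estimate $I:=\int_0^1(u+c)^{-\alpha}(u+b)^{-\beta}\,du$ by splitting the interval $(0,1)$ at the scale $M:=\max\{b,c\}$, at which point each of the two factors collapses either to a constant or to a pure power of $u$, so that the resulting one‑variable integrals can be evaluated in closed form. Throughout I would rely on two elementary comparisons: on $(0,M)$ one has $u+M\sim M\sim b+c$, while on $(M,1)$ (a nonempty range only when $M<1$) both $u+b$ and $u+c$ are comparable to $u$; combined with $\int_0^R(u+a)^{-\gamma}\,du\le(1-\gamma)^{-1}(R+a)^{1-\gamma}$ for $\gamma<1$, $\int_R^1u^{-\gamma}\,du\le(\gamma-1)^{-1}R^{1-\gamma}$ for $\gamma>1$ and $\int_R^1u^{-1}\,du=\log(1/R)$, together with $m:=\min\{b,c\}\le M\sim b+c$, every sub‑integral then reduces to a short algebraic manipulation (these one‑dimensional pieces could also be read off from Lemma \ref{lemma2}).

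In the case $\alpha=\beta=\tfrac12$ I would split at $R:=\min\{1,M\}$: on $(0,R)$ bound the factor carrying $M$ by $\lesssim M^{-1/2}$ and integrate $(u+m)^{-1/2}$, which contributes only $O(1)$ because $R+m\le 2M$, and $O(1)\lesssim\log(2+(b+c)^{-1/2})$; on $(M,1)$, if present, the integrand is $\sim u^{-1}$, so that piece is $\sim\log(1/M)\lesssim\log\frac1{b+c}\lesssim\log(2+(b+c)^{-1/2})$. In the case $\tfrac12\le\alpha,\beta<1$ with $\alpha+\beta>1$ (the overlap $\alpha=\beta=\tfrac12$ being absorbed into the previous case by the top‑down reading of the $\cases$), I would assume WLOG $M=b$ — the hypotheses and the symmetric target $(b+c)^{1-\alpha-\beta}$ are invariant under $(b,c,\alpha,\beta)\mapsto(c,b,\beta,\alpha)$ — so that $(u+b)^{-\beta}\lesssim(b+c)^{-\beta}$ on all of $(0,1)$; when $b+c\ge1$, integrate $(u+c)^{-\alpha}$ over $(0,1)$ and use $1+c\lesssim b+c$, and when $b+c<1$, split at $b$, obtaining $(b+c)^{1-\alpha-\beta}$ from $(0,b)$ as above and $\int_b^1u^{-(\alpha+\beta)}\,du\lesssim b^{1-\alpha-\beta}\lesssim(b+c)^{1-\alpha-\beta}$ from $(b,1)$ via $b\ge\tfrac12(b+c)$.

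In the case $\beta=1$, $\tfrac12\le\alpha<1$, the logarithm is genuine and is produced when $c$ dominates $b$, so I would focus on $b\le c$ (hence $c\sim b+c$); splitting at $R:=\min\{1,c\}$, on $(0,R)\subseteq(0,c)$ we have $u+c\sim c\sim b+c$, so this piece is $\lesssim(b+c)^{-\alpha}\int_0^R(u+b)^{-1}\,du\le(b+c)^{-\alpha}\log(1+c/b)$, which is exactly the claimed bound, while on $(c,1)$, if present, the integrand is $\sim u^{-1-\alpha}$ and contributes $\lesssim c^{-\alpha}\sim(b+c)^{-\alpha}\lesssim(b+c)^{-\alpha}\log(1+c/b)$ since $c/b\ge1$ forces $\log(1+c/b)\ge\log2$; in the complementary range $c\le b$ the same splitting at $b$ yields the (log‑free) bound $I\lesssim(b+c)^{-\alpha}$.

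The main point is bookkeeping rather than analysis: the right‑hand sides are not symmetric in $(b,c)$ — most conspicuously when $\beta=1$ — so one must consistently cut the interval at the correct scale $\max\{b,c\}$ and keep track of which factor is frozen at its endpoint value and which is integrated out; and in the first case one must carefully separate the harmless $O(1)$ bulk near $u=0$ from the true logarithm generated by the range $u\in(\max\{b,c\},1)$. Once these splittings are fixed, every remaining step is a one‑line elementary integration.
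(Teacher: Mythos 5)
Your proof is correct, but it is organized quite differently from the paper's. The paper's Appendix 8.2 first restricts to $b,c<1$, rescales $u\mapsto u/b$ to normalize one factor to $(u+1)^{\beta}$, and then runs a two-level case analysis ($c\gtrless\tfrac12$, then a split of the rescaled integral at $u=c/b$ with sub-cases $c/b\gtrless\tfrac12$), before treating $b>1$ or $c>1$ separately at the end; for $\alpha=\beta=\tfrac12$ it invokes the exact antiderivative $2\log\frac{\sqrt{b+1}+\sqrt{c+1}}{\sqrt b+\sqrt c}$. You instead keep the integral in its original variable and cut once at $\max\{b,c\}$ (capped at $1$), freezing the larger shift at its endpoint value and reducing the tail to a pure power (or $u^{-1}$) integral. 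This removes the rescaling and most of the case branching, handles $b+c\ge 1$ uniformly rather than as a separate pass, and produces the logarithm in the first case by direct integration of $u^{-1}$ over $(\max\{b,c\},1)$ rather than from a closed form. I checked the individual steps (the $O(1)$ bulk versus the $\log(1/M)$ tail in the first case, the symmetry reduction and the split at $b$ with $\int_b^1 u^{-(\alpha+\beta)}\,du\lesssim b^{1-\alpha-\beta}$ in the second, and the two sub-cases $b\le c$, $c\le b$ in the third) and they all go through; your explicit exclusion of $\alpha=\beta=\tfrac12$ from the second case is necessary and matches the intended reading of the statement.

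One remark on the third case: as you note, for $c\le b$ your argument yields only $I\lesssim (b+c)^{-\alpha}$, which does \emph{not} imply the displayed bound $(b+c)^{-\alpha}\log(1+c/b)$ when $c\ll b$ (the logarithm then tends to $0$, while the integral does not; take $\alpha=\tfrac12$, $b=1$, $c\to 0$). This is a defect of the statement rather than of your proof — the paper's own argument in the sub-case $c/b<\tfrac12$ likewise produces only $b^{-\alpha}\lesssim(b+c)^{-\alpha}$ — and both proofs in fact establish the corrected bound $(b+c)^{-\alpha}\log\bigl(2+\tfrac cb\bigr)$, which is what is actually used in Section 5.
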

\begin{proof}
    The proof will be provided in the Appendix 8.2.
\end{proof}

\section{Proof of Theorem \ref{thm1} for the Stokes equations}\label{sec3}

Since the Stokes system is linear, we treat the cases of the normal and tangential components of the boundary data separately.

$\bullet$ \textbf{(Step 1)} (Case that \(g_j=0\) for \(j\neq n\) and $g_n \neq 0$) We first consider the case that only the normal component of the boundary data is not zero, i.e. \(j=n\). \\

1) We first estimate the tangential components of the velocity \(w\).
Let \(1\leq i \leq n-1\). Reminding that
\begin{align*}
    K_{in}(x,t)=-4\partial_n C_i(x,t)-2\partial_i E(x)\delta(t)
    =-4\partial_i C_n(x,t)-4\partial_i\partial_n B(x,t)-2\partial_i E(x)\delta(t),
\end{align*}
we note that \(w_i\) is decomposed as follows.
\begin{align*}
    w_i(x,t)=&\int_{0}^t\int_{\Sigma}K_{in}(x-y',t-s)g_n(y',s)dy'ds\\
    =&-4\int_{0}^{t}\int_{\Sigma}\partial_i C_n(x-y',t-s)g_n(y',s)dy'ds
    -4\int_{0}^{t}\int_{\Sigma}\partial_i\partial_n B(x-y',t-s)g_n(y',s)dy'ds\\
    &-2\int_{\Sigma}\partial_i E(x-y')g_n(y',t)dy'\\
    :=&w_i^{\mathcal{C}}(x,t)+w_i^{\mathcal{B}}(x,t)+w_i^{\mathcal{E}}(x,t).
\end{align*}
We control $w_i^{\mathcal{C}}$, $w_i^{\mathcal{B}}$ and $w_i^{\mathcal{E}}$ separately.\\
\textbf{(Estimate of \(w_i^{\mathcal{E}}\))} We note that
\begin{equation*}
    w_i^{\mathcal{E}}(x,t)=-2g_n^{\mathcal{T}}(t)\int_{\Sigma}\partial_i E(x-y')g_n^{\mathcal{S}}(y')dy'.
\end{equation*}
Thus we find that
\begin{equation*}
    \partial_x^{l}\partial_t^{m}w_i^{\mathcal{E}}(x,t)=-2\partial_t^{m}g_n^{\mathcal{T}}(t)\int_{\Sigma}\partial_x^{l}\partial_i E(x-y')g_n^{\mathcal{S}}(y')dy',
\end{equation*}
and we get that using \(|x'-y'|\gtrsim 1\) for any \(x'\in B'(0,1)\) and \(y'\in A\), 
\begin{align*}
    \left|\partial_x^l\partial_t^m w_i^{\mathcal{E}}(x,t)\right|
    \lesssim \left|\partial_t^m g_n^{\mathcal{T}}(t)\right|\int_{A}\left|\partial_x^{l}\partial_i E(x-y')\right|\left|g_n^{\mathcal{S}}(y')\right|dy'ds
    \lesssim \left\|\partial_t^m g_n^{\mathcal{T}}\right\|_{L^{\infty}}\left\|g_n^{\mathcal{S}}\right\|_{L^{1}}.
\end{align*}
Thus we obtain that
\begin{equation}\label{thm1-10}
    \left\|\partial_x^l\partial_t^m w_i^{\mathcal{E}}\right\|_{L^{\infty}(Q_{+}^{1})}<\infty.
\end{equation}
\textbf{(Estimate of \(w_i^{\mathcal{C}}\))} We note that
\begin{equation*}
    w_i^{\mathcal{C}}(x,t)=-4\int_0^{t}g_n^{\mathcal{T}}(t-s)\int_{\Sigma}\partial_i C_n(x-y',s)g_n^{\mathcal{S}}(y')dy'ds
\end{equation*}
and since \(\partial_t^k g_n^{\mathcal{T}}(0)=0\) for any \(k\geq 0\), we find that 
\begin{equation*}
    \partial_{x'}^l\partial_t^m w_i^{\mathcal{C}}(x,t)=-4\int_0^t\partial_t^{m}g_n^{\mathcal{T}}(t-s)\int_{\Sigma}\partial_{x'}^l\partial_i C_n(x-y',s)g_n^\mathcal{S}(y')dy'ds.
\end{equation*}

Thus we find that 
\begin{align}\label{est-wic}
 \nonumber  \left|\partial_{x'}^l\partial_t^m w_i^{\mathcal{C}}(x,t)\right|
   &\lesssim \int_0^t\left|\partial_t^{m}g_n^{\mathcal{T}}(t-s)\right|\int_{A}\left|\partial_{x'}^l\partial_i C_n(x-y',s)\right|\left|g_n^\mathcal{S}(y')\right|dy'ds.\\
\nonumber   &\lesssim \int_0^t\left|\partial_t^{m}g_n^{\mathcal{T}}(t-s)\right|\int_{A}\frac{1}{s^{\frac{1}{2}}(|x-y'|^2+s)^{\frac{l+n}{2}}}\left|g_n^\mathcal{S}(y')\right|dy'ds\\
   &\lesssim \sqrt{t}\left\|\partial_t^m g_n^{\mathcal{T}}\right\|_{L^{\infty}}\left\|g_n^{\mathcal{S}}\right\|_{L^{1}},
\end{align}
where we used that \((|x-y'|^2+s)^{\frac{\alpha}{2}}\geq |x'-y'|^{\alpha}\gtrsim 1\) for any \(\alpha>0\).\\

Next, the normal derivatives of $w_i^{\mathcal{C}}$ are calculated as follows using \eqref{rel-C2}:

\begin{align*}
    \partial_{x_n}\partial_{x'}^l\partial_t^m w_i^{\mathcal{C}}(x,t)=&-4\int_0^t\partial_t^{m}g_n^{\mathcal{T}}(t-s)\int_{\Sigma}\partial_{x_n}\partial_{x'}^l\partial_i C_n(x-y',s)g_n^\mathcal{S}(y')dy'ds\\
   =&4\sum_{k=1}^{n-1}\int_0^t\partial_t^{m}g_n^{\mathcal{T}}(t-s)\int_{\Sigma}\partial_{x_k}\partial_{x'}^l\partial_i C_k(x-y',s)g_n^\mathcal{S}(y')dy'ds\\
    &+2\int_0^t\partial_t^{m}g_n^{\mathcal{T}}(t-s)\int_{\Sigma}\partial_{x'}^l\partial_i \Gamma(x-y',s)g_n^\mathcal{S}(y')dy'ds.
\end{align*}
As shown above in the case of tangential derivatives, similar computations show that 
\begin{align*}
    \left|\partial_{x_n}\partial_{x'}^l\partial_t^m w_i^{\mathcal{C}}(x,t)\right|
    \lesssim (t+\sqrt{t})\left\|\partial_t^m g_n^{\mathcal{T}}\right\|_{L^{\infty}}\left\|g_n^{\mathcal{S}}\right\|_{L^{1}}.
\end{align*}
Hence, we obtain
\begin{equation}\label{1-normal}
    \left\|\partial_{x_n}\partial_{x'}^l\partial_t^m w_i^{\mathcal{C}}\right\|_{L^{\infty}(Q_{+}^{1})}\lesssim \left\|\partial_t^m g_n^{\mathcal{T}}\right\|_{L^{\infty}}\left\|g_n^{\mathcal{S}}\right\|_{L^{1}}.
\end{equation}

Next, the second normal derivatives of $w_i^{\mathcal{C}}$ is estimated as follows using \eqref{rel-C1} and \eqref{rel-C2}:
\begin{align*}
    \partial_{x_n}^2\partial_{x'}^l\partial_t^m w_i^{\mathcal{C}}(x,t)
    &=4\sum_{k=1}^{n-1}\int_{0}^{t}\partial_t^m g_n^{\mathcal{T}}(t-s)\int_{\Sigma}\partial_{x_k}\partial_{x'}^l\partial_i\partial_k C_n(x-y',s)g_n^{\mathcal{S}}(y')dy'ds\\
    &+4\sum_{k=1}^{n-1}\int_{0}^{t}\partial_{t}^m g_n^{\mathcal{T}}(t-s)\int_{\Sigma}\partial_{x_k}\partial_{x'}^l\partial_i\partial_n\partial_k B(x-y',s)g_n^\mathcal{S}(y')dy'ds\\
    & +2\int_{0}^{t}\partial_{t}^{m}g_n^{\mathcal{T}}(t-s)\int_{\Sigma}\partial_n\partial_{x'}^{l}\partial_i \Gamma(x-y',s)g_n^{\mathcal{S}}(y')dy'ds\\
    &=I_1+I_2+I_3.
\end{align*}

Here each \(I_k\) is estimated as follows:
\begin{align*}
    |I_1|&\lesssim \sum_{k=1}^{n-1}\int_{0}^{t}\left|\partial_t^{m}g_n^{\mathcal{T}}(t-s)\right|\int_{A}\left|\partial_{x_k}\partial_{x'}^l\partial_i\partial_k C_n(x-y',s)\right|\left|g_n^{\mathcal{S}}(y')\right|dy'ds\\
    &\lesssim\int_{0}^{t}\left|\partial_t^{m}g_n^{\mathcal{T}}(t-s)\right|\int_{A}\frac{1}{s^{\frac{1}{2}}(|x-y'|^2+s)^{\frac{l+n+2}{2}}}\left|g_n^{\mathcal{S}}(y')\right|dy'ds\\
    &\lesssim \sqrt{t}\left\|\partial_t^m g_n^{\mathcal{T}}\right\|_{L^{\infty}}\left\|g_n^{\mathcal{S}}\right\|_{L^{1}},
\end{align*}

\begin{align}
   \nonumber |I_2|&\lesssim \sum_{k=1}^{n-1}\int_{0}^{t}\left|\partial_t^{m}g_n^{\mathcal{T}}(t-s)\right|\int_{A}\left|\partial_{x_k}\partial_{x'}^l\partial_i\partial_n\partial_k B(x-y',s)\right|\left|g_n^\mathcal{S}(y')\right|dy'ds\\
   \nonumber &\lesssim \int_{0}^{t}\left|\partial_t^{m}g_n^{\mathcal{T}}(t-s)\right|\int_{A}\frac{e^{-\frac{x_n^2}{10s}}}{s(|x-y'|^2+s)^{\frac{l+n+1}{2}}}\left|g_n^{\mathcal{S}}(y')\right|dy'ds\\
   \nonumber &\lesssim \left\|\partial_t^m g_n^{\mathcal{T}}\right\|_{L^{\infty}}\left\|g_n^{\mathcal{S}}\right\|_{L^{1}} \int_{0}^t\frac{e^{-\frac{x_n^2}{10s}}}{s}ds\\
    &\lesssim \left\|\partial_t^m g_n^{\mathcal{T}}\right\|_{L^{\infty}}\left\|g_n^{\mathcal{S}}\right\|_{L^{1}}\log \left(1+\frac{10t}{x_n^2}\right), \label{E}
\end{align}
where the last inequality follows from Lemma \ref{integralestimates}.
Thus we conclude that 
\begin{equation*}
    |I_2|\lesssim \left\|\partial_t^m g_n^{\mathcal{T}}\right\|_{L^{\infty}}\left\|g_n^{\mathcal{S}}\right\|_{L^{1}}\log \left(1+\frac{10t}{x_n^2}\right).
\end{equation*}

Unfortunately, we cannot directly estimate the \(L^\infty\) norm of \(I_2\). However using the above estimate for \(I_2\) and the estimate \(\int_{0}^{1}\abs{\log \left(1+\frac{10t}{x_n^2}\right)}^{p}dx_n\leq c(p)\) for any \(p\in [1,\infty)\) and \(t\in (0,1)\), we see that for any \(p\in[1,\infty)\) we have the bound
\begin{equation*}
    \left\|I_2\right\|_{L^p(Q_{+}^{1})}<\infty.
\end{equation*}

Finally,
\begin{align*}
    |I_3|&\lesssim 
    \int_{0}^{t}\left|\partial_t^{m}g_n^{\mathcal{T}}(t-s)\right|\int_{A}\left|\partial_n\partial_{x'}^{l}\partial_i \Gamma(x-y',s)\right|\left|g_n^{\mathcal{S}}(y')\right|dy'ds\\
    &\lesssim
    \int_{0}^{t}\left|\partial_t^{m}g_n^{\mathcal{T}}(t-s)\right|\int_{A}\frac{1}{(|x-y'|^2+s)^{\frac{l+n+2}{2}}}\left|g_n^{\mathcal{S}}(y')\right|dy'ds \\
    &\lesssim t\left\|\partial_t^m g_n^{\mathcal{T}}\right\|_{L^{\infty}}\left\|g_n^{\mathcal{S}}\right\|_{L^{1}}.
\end{align*}

Hence we conclude that for \(p \in (1,\infty)\),
\begin{equation}\label{2ndwic}
    \left\|\partial_{x_n}^2\partial_{x'}^l\partial_t^m w_i^{\mathcal{C}}\right\|_{L^{p}(Q_{+}^{1})}\lesssim \left\|\partial_t^m g_n^{\mathcal{T}}\right\|_{L^{\infty}}\left\|g_n^{\mathcal{S}}\right\|_{L^{1}}.
\end{equation}

Next, the we estimate the third normal derivatives of $w_i^{\mathcal{C}}$.  Using \eqref{rel-C1}, \eqref{rel-C2} and the fact that the function \(B(x,t)\) satisfies the heat equation,
\begin{align*}
    \partial_{x_n}^3\partial_{x'}^l\partial_t^m w_i^{\mathcal{C}}(x,t)
    &=-4\sum_{k=1}^{n-1}\sum_{j=1}^{n-1}\int_{0}^{t}\partial_t^m g_n^{\mathcal{T}}(t-s)\int_{\Sigma}\partial_{x_k}\partial_{x'}^l\partial_i\partial_k\partial_j C_j(x-y',s)g_n^{\mathcal{S}}(y')dy'ds\\
    &-2\sum_{k=1}^{n-1}\int_{0}^{t}\partial_t^m g_n^{\mathcal{T}}(t-s)\int_{\Sigma}\partial_{x_k}\partial_{x'}^{l}\partial_i\partial_k\partial_n \Gamma(x-y',s)g_n^{\mathcal{S}}(y')dy'ds\\
    &-4\sum_{k=1}^{n-1}\sum_{j=1}^{n-1}\int_{0}^{t}\partial_t^m g_n^{\mathcal{T}}(t-s)\int_{\Sigma}\partial_{x_k}^2\partial_{x'}^l\partial_i\partial_j^2B(x-y',s)g_n^{\mathcal{S}}(y')dy'ds\\
    &+4\sum_{k=1}^{n-1}\int_{0}^{t}\partial_t^{m+1}g_n^{\mathcal{T}}(t-s)\int_{\Sigma}\partial_k^2\partial_{x'}^{l}\partial_i B(x-y',s)g_n^{\mathcal{S}}(y')dy'ds\\
    &+2\int_{0}^{t}\partial_{t}^{m}g_n^{\mathcal{T}}(t-s)\int_{\Sigma}\partial_n^2\partial_{x'}^{l}\partial_i \Gamma(x-y',s)g_n^{\mathcal{S}}(y')dy'ds\\
    &=J_1+J_2+J_3+J_4+J_5.
\end{align*}

Since there are only the tangential derivatives in the functions \(B\) and \(C_j\) (\(1\leq j\leq n-1\)), using the estimates of the functions given in Preliminaries, we see that
\begin{equation}\label{3rdwic}
    \left\|\partial_{x_n}^3\partial_{x'}^l\partial_t^m w_i^{\mathcal{C}}\right\|_{L^{\infty}(Q_{+}^{1})}\leq \sum_{k=1}^5 |J_k|\lesssim \left(\left\|\partial_t^{m} g_n^{\mathcal{T}}\right\|_{L^{\infty}}+\left\|\partial_t^{m+1} g_n^{\mathcal{T}}\right\|_{L^{\infty}}\right)\left\|g_n^{\mathcal{S}}\right\|_{L^{1}}.
\end{equation}

For arbitrary \(k\)th normal derivative, we divide the case where \(k\) is even or odd. If \(k\) is odd, after removing all the normal derivatives applied to the functions \(C_j\) (\(1\leq j \leq n\)) and \(B\), we obtain that 
\begin{equation*}
    \left\|\partial_{x_n}^{k}\partial_{x'}^l\partial_t^m w_i^{\mathcal{C}}\right\|_{L^{\infty}(Q_{+}^{1})}\lesssim \sum_{s=0}^{\frac{k-1}{2}}\left\|\partial_t^{m+s} g_n^{\mathcal{T}}\right\|_{L^{\infty}}\left\|g_n^{\mathcal{S}}\right\|_{L^{1}}.
\end{equation*}

On the other hand, if \(k\) is even, we cannot remove the first normal derivative applied to the function \(B\) and this leads to the time singularity at \(0\) as shown in \eqref{E}. Thus we cannot obtain the \(L^{\infty}\) estimates directly from the pointwise estimates of the functions \(C_k\) and \(B\) given in the Preliminaries. However we still have the following \(L^p\) estimates for any \(p\in [1,\infty)\):
\begin{equation*}
    \left\|\partial_{x_n}^{k}\partial_{x'}^l\partial_t^m w_i^{\mathcal{C}}\right\|_{L^{p}(Q_{+}^{1})} \lesssim \begin{cases}
\displaystyle\sum_{s=0}^{\frac{k-2}{2}}\left\|\partial_t^{m+s} g_n^{\mathcal{T}}\right\|_{L^{\infty}}\left\|g_n^{\mathcal{S}}\right\|_{L^{1}} &  \textup{if } k \geq 2,\\
\left\|\partial_t^{m} g_n^{\mathcal{T}}\right\|_{L^{\infty}}\left\|g_n^{\mathcal{S}}\right\|_{L^{1}} & \textup{if } k=0. 
\end{cases}
\end{equation*}

Then from the interpolation inequality
\begin{equation*}
    \left\|f\right\|_{L^{\infty}(Q_{+}^{1})}\leq C\left(\left\|\nabla f\right\|_{L^{\infty}(Q_{+}^{1})}^{\frac{n+1}{p+n+1}}\left\|f\right\|_{L^p(Q_{+}^{1})}^{\frac{p}{p+n+1}}+\left\|f\right\|_{L^p(Q_{+}^{1})}\right),\qquad 1\leq p <\infty,
\end{equation*} 
we obtain that
\begin{equation}\label{kthwic}
    \left\|\partial_{x_n}^{k}\partial_{x'}^l\partial_t^m w_i^{\mathcal{C}}\right\|_{L^{\infty}(Q_{+}^{1})} <\infty.
\end{equation}

\textbf{(Estimate of \(w_i^{\mathcal{B}}\))} We note that
\begin{equation*}
    w_i^{\mathcal{B}}(x.t)=-4\int_{0}^{t}g_n^{\mathcal{T}}(t-s)\int_{\Sigma}\partial_i\partial_n B(x-y', s)g_n^{\mathcal{S}}(y')dy'ds.
\end{equation*}

Thus
\begin{equation*}
    \partial_{x_n}^k \partial_{x'}^l \partial_t^{m}w_i^{\mathcal{B}}(x.t)=-4\int_{0}^{t}\partial_t^m g_n^{\mathcal{T}}(t-s)\int_{\Sigma}\partial_{x_n}^k \partial_{x'}^l\partial_i\partial_n B(x-y', s)g_n^{\mathcal{S}}(y')dy'ds.
\end{equation*}
We already have discussed how to estimate the above integral when we estimated \(w_i^{\mathcal{C}}\) and thus we obtain
\begin{equation}\label{wib}
    \left\|\partial_{x_n}^{k}\partial_{x'}^l\partial_t^m w_i^{\mathcal{B}}\right\|_{L^{\infty}(Q_{+}^{1})} <\infty.
\end{equation}\\
2) We now estimate the normal component of the velocity \(w\). Recalling that
\begin{align*}
    K_{nn}(x,t)
    =-2\partial_n \Gamma(x,t)-4\partial_n C_n(x,t)-2\partial_n E(x)\delta(t)
    =4\sum_{k=1}^{n-1}\partial_k C_k(x,t)-2\partial_n E(x)\delta(t),
\end{align*}
we note that \(w_n\) is decomposed as follows
\begin{align*}
    w_n(x,t)&=\int_{0}^{t}\int_{\Sigma}K_{nn}(x-y',t-s)g_n(y',s)dy'ds\\
    &=4\sum_{k=1}^{n-1}\int_{0}^t\int_{\Sigma}\partial_k C_k(x-y',t-s)g_n(y',s)dy'ds-2\int_{\Sigma}\partial_n E(x-y')g_n(y',t)dy'\\
    &=w_n^{\mathcal{C}}(x,t)+w_n^{\mathcal{E}}(x,t).
\end{align*}
\textbf{(Estimates of \(w_n^{\mathcal{E}}\) and \(w_n^{\mathcal{C}}\)})   \(w_n^{\mathcal{E}}\) and \(w_n^{\mathcal{C}}\) enjoy a similar estimate as those of \(w_i^{\mathcal{E}}\) and \(w_i^{\mathcal{C}}\) for $i<n$ respectively, and therefore, it follows that
\begin{equation}\label{est-wn-wnc}
    \left\|\partial_x^l\partial_t^m w_n^\mathcal{E}\right\|_{L^{\infty}(Q_{+}^{1})}+\left\|\partial_{x_n}^{k}\partial_{x'}^l\partial_t^m w_n^{\mathcal{C}}\right\|_{L^{\infty}(Q_{+}^{1})} <\infty.
\end{equation}
\newline
$\bullet$ \textbf{(Step 2)}  (Case that $g_j \neq 0$ for $j\neq n$ and \(g_n=0\)) Secondly, we treat the case where
only the tangential component of the boundary data is not zero, i.e., \(1\leq j \leq n-1\).\\ 

1) We first estimate the tangential components of the velocity \(w\). Let \(1\leq i \leq n-1\). Since
\begin{align*}
    K_{ij}(x,t)=-2\delta_{ij}\partial_n \Gamma(x,t)-4\partial_j C_i(x,t),
\end{align*}
\(w_i\) is expressed as follows:
\begin{align*}
    w_i(x,t)&=-2\delta_{ij}\int_{0}^{t}\int_{\Sigma}\partial_n \Gamma(x-y',t-s)g_j(y',s)dy'ds-4\int_{0}^{t}\int_{\Sigma}\partial_j C_i(x-y',t-s)g_j(y',s)dy'ds\\
    &=w_i^{\mathcal{H}}(x,t)+w_i^{\mathcal{C}}(x,t).
\end{align*}
\textbf{(Estimate of \(w_i^{\mathcal{H}}\) and \(w_i^{\mathcal{C}}\))} For \(w_i^{\mathcal{H}}\) we note that
\begin{align*}
    w_i^{\mathcal{H}}(x,t)=-2\delta_{ij}\int_{0}^{t}g_{j}^{\mathcal{T}}(t-s)\int_{\Sigma}\partial_n\Gamma(x-y',s)g_j^{\mathcal{S}}(y')dy'ds
\end{align*}
and since \(\partial_t^k g_j^{\mathcal{T}}(0)=0\) for any \(k\geq 0\), we find that
\begin{align*}
\partial_{x_n}^{k}\partial_{x'}^{l}\partial_t^mw_i^{\mathcal{H}}(x,t)=-2\delta_{ij}\int_{0}^{t}\partial_t^m g_j^{\mathcal{T}}(t-s)\int_{\Sigma}\partial_{x_n}^{k}\partial_{x'}^{l}\partial_n \Gamma(x-y',s)g_j^{\mathcal{S}}(y')dy'ds.
\end{align*}

Thus we find that
\begin{align*}
     \left|\partial_{x_n}^{k}\partial_{x'}^{l}\partial_t^m w_i^{\mathcal{H}}(x,t)\right|&\lesssim\int_{0}^{t}\left|\partial_t^m g_j^{\mathcal{T}}(t-s)\right|\int_{A}\left|\partial_{x_n}^{k}\partial_{x'}^{l}\partial_n \Gamma(x-y',s)\right|\left|g_j^{\mathcal{S}}(y')\right|dy'ds\\
     &\lesssim\int_{0}^{t}\left|\partial_t^m g_j^{\mathcal{T}}(t-s)\right|\int_{A}\frac{1}{(|x-y'|^2+s)^{\frac{n+k+l+1}{2}}}\left|g_j^{\mathcal{S}}(y')\right|dy'ds\\
     &\lesssim t\left\|\partial_t^m g_j^{\mathcal{T}}\right\|_{L^{\infty}}\left\|g_j^{\mathcal{S}}\right\|_{L^{1}}.
\end{align*}

Since \(w_i^{\mathcal{C}}\) enjoys the same estimates as the \textbf{Step 1}, we find that
\begin{equation}\label{tangentialwicwih}
 \left\|\partial_{x_n}^{k}\partial_{x'}^{l}\partial_t^m w_i^{\mathcal{H}}\right\|_{L^{\infty}(Q_+^1)}+ \left\|\partial_{x_n}^{k}\partial_{x'}^{l}\partial_t^m w_i^{\mathcal{C}}\right\|_{L^{\infty}(Q_+^1)}<\infty.
\end{equation}

2) We now estimate the normal component of the velocity \(w\).\\
Recalling that $K_{nj}(x,t)=-4\partial_j C_n(x,t)$, we note that \(w_n\) is written as follows
\begin{align*}
    w_n(x,t)=-4\int_0^t\int_{\Sigma}\partial_jC_n(x-y',t-s)g_j(y',s)dy'ds.
\end{align*}
By the calculations given in \textbf{Step 1}, we find that \(w_n\) satisfies the estimate 
\begin{equation}\label{tangentialwnc}
    \left\|\partial_x^l\partial_t^m w_n^\mathcal{C}\right\|_{L^{\infty}(Q_{+}^{1})}<\infty.
\end{equation}

$\bullet$ \textbf{(Step 3)} We now estimate the pressure \(p\).\\
We use the second formula \eqref{secondformulapressure} for the pressure to obtain that
\begin{align*}
    p(x,t)&=2\partial_n^2\int_{\Sigma}E(x-y')g_n(y',t)dy'+2\int_{\Sigma}E(x-y')\partial_t g_n(y',t)dy'\\
    &-4\partial_t\int_{0}^{t}\int_{\Sigma}\partial_n A(x-y',t-s)g_n(y',s)dy'ds+4\Delta_{x'}\int_0^t\int_{\Sigma}\partial_n A(x-y',t-s)g_n(y',s)dy'ds\\
    &=I_1+I_2+I_3+I_4.
\end{align*}

The estimates of \(I_1\) and \(I_2\) are easy and the result is 
\begin{equation*}
  |I_1|+|I_2|\lesssim \left\|g_n^{\mathcal{S}}\right\|_{L^1}(\left\|g_n^{\mathcal{T}}\right\|_{L^\infty}+\left\|\partial_t g_n^{\mathcal{T}}\right\|_{L^\infty}).
\end{equation*}

For \(I_3\) we have that using \(g_n^{\mathcal{T}}(0)=0\),
\begin{align*}
    I_3=-4\int_{0}^{t}\int_{A}\partial_n A(x-y',s)\partial_t g_n(y',t-s)dy'ds.
\end{align*}

Thus we have that
\begin{align*}
    |I_3|&\lesssim \int_{0}^{t}\int_{A}\left|\partial_n A(x-y',s)\right|\left|\partial_t g_n(y',t-s)\right|dy'ds\\
    &\lesssim \int_{0}^{t}\int_{A}\frac{1}{s^{\frac{1}{2}}(|x-y'|^2+s)^{\frac{n-1}{2}}}\left|\partial_t g_n(y',t-s)\right|dy'ds\\
    &\lesssim \sqrt{t} \left\|g_n^{\mathcal{S}}\right\|_{L^1}\left\|\partial_t g_n^{\mathcal{T}}\right\|_{L^{\infty}}.
\end{align*}

Finally for \(I_4\), we have that
\begin{align*}
    I_4=4\int_{0}^{t}\int_{\Sigma}\Delta_{x'}\partial_n A(x-y',s)g_n(y',t-s)dy'ds.
\end{align*}

Thus we have that
\begin{align*}
    |I_4|&\lesssim \int_{0}^{t}\int_{\Sigma}\left|\Delta_{x'}\partial_n A(x-y',s)\right|\left|g_n(y',t-s)\right|dy'ds.\\
    &\lesssim \int_{0}^{t}\int_{\Sigma}\frac{1}{s^{\frac{1}{2}}(|x-y'|^2+s)^{\frac{n+1}{2}}}\left|g_n(y',t-s)\right|dy'ds\\
    &\lesssim \sqrt{t}\left\| g_n^{\mathcal{T}}\right\|_{L^{\infty}} \left\|g_n^{\mathcal{S}}\right\|_{L^1}.
\end{align*}

Hence we conclude that
\begin{equation}\label{p}
    |p(x,t)|\lesssim
   \left\|g_n^{\mathcal{S}}\right\|_{L^1}(\left\|g_n^{\mathcal{T}}\right\|_{L^\infty}+\left\|\partial_t g_n^{\mathcal{T}}\right\|_{L^\infty}).
\end{equation}

For the time derivatives of \(p\), we find that since \(\partial_t^k g_n^{\mathcal{T}}(0)=0\) for any \( k\geq 0\),
\begin{align*}
    \partial_t^m p(x,t)
    &=2\partial_n^2\int_{\Sigma}E(x-y')\partial_t^m g_n(y',t)dy'+2\int_{\Sigma}E(x-y')\partial_t^{m+1}g_n(y',t)dy'\\
    &-4\int_{0}^t\int_{\Sigma}\partial_n A(x-y',t-s)\partial_t^{m+1}g_n(y',t-s)dy'ds +4\Delta_{x'}\int_{0}^{t}\int_{\Sigma}\partial_n A(x-y',s)\partial_t^m g_n(y',s)dy'ds.
\end{align*}

And we obtain the estimates
\begin{equation}\label{p_t}
    |\partial_t^m p(x,t)|\lesssim  \left\|g_n^{\mathcal{S}}\right\|_{L^1}\left(\left\|\partial_t^m g_n^{\mathcal{T}}\right\|_{L^\infty}+\left\|\partial_t^{m+1} g_n^{\mathcal{T}}\right\|_{L^\infty}\right),
\end{equation}
by following the same proof for the estimate of \(p\).\\
The higher mixed derivatives of \(p\) follows from the previous estimates of \(w\) and the Stokes equations.
\qedsymbol


\section{Proof of Theorem \ref{thm2} for the Stokes equations}\label{sec4}
$\bullet$\,\,\textbf{(Step 1) Estimate of \(w_i\) and \(\nabla w_i\).} \,\,We recall that for \(1\leq i \leq n\),
\begin{align*}
    w_i(x,t)&=-2\delta_{ij}\int_{0}^t\int_{\Sigma}\partial_n \Gamma(x-y',t-s)g_j(y',s)dy'ds-4\int_{0}^t\int_{\Sigma}\partial_j C_i(x-y',t-s)g_j(y',s)dy'ds\\
    &=I_1+I_2.
\end{align*}
It is rather straightforward that
\begin{align*}
    |I_1|&\lesssim\int_{0}^{t}\int_{\Sigma}\frac{1}{(|x-y'|^2+s)^{\frac{n+1}{2}}}\left|g_j(y',t-s)\right|dy'ds\lesssim t\left\|g_j^\mathcal{T}\right\|_{L^{\infty}}\left\|g_j^\mathcal{S}\right\|_{L^{1}},\\
    |I_2|&\lesssim\int_{0}^{t}\int_{\Sigma}\frac{1}{s^{\frac{1}{2}}(|x-y'|^2+s)^{\frac{n}{2}}}\left|g_j(y',t-s)\right|dy'ds\lesssim \sqrt{t}\left\|g_j^\mathcal{T}\right\|_{L^{\infty}}\left\|g_j^\mathcal{S}\right\|_{L^{1}}.
\end{align*}
Thus we find that
\begin{equation}\label{thm2w}
    \left\|w_i\right\|_{L^{\infty}(Q_2^{+})}\lesssim\left\|g_j^\mathcal{T}\right\|_{L^{\infty}}\left\|g_j^\mathcal{S}\right\|_{L^{1}}.
\end{equation}
Next, we estimate the tangential derivatives.
For \(1\leq k\leq n-1\), we note that
\begin{align*}
    \partial_k w_i(x,t)&=-2\delta_{ij}\int_{0}^t\int_{\Sigma}\partial_k\partial_n \Gamma(x-y',s)g_j(y',t-s)dy'ds-4\int_{0}^t\int_{\Sigma}\partial_k\partial_j C_i(x-y',s)g_j(y',t-s)dy'ds\\
    &=J_1+J_2.
\end{align*}
Similarly as in the above computations, it follows that 
\begin{align*}
    |J_1|&\lesssim\int_{0}^{t}\int_{\Sigma}\frac{1}{(|x-y'|^2+s)^{\frac{n+2}{2}}}\left|g_j(y',t-s)\right|dy'ds\lesssim t\left\|g_j^\mathcal{T}\right\|_{L^{\infty}}\left\|g_j^\mathcal{S}\right\|_{L^{1}},\\
    |J_2|&\lesssim\int_{0}^{t}\int_{\Sigma}\frac{1}{s^{\frac{1}{2}}(|x-y'|^2+s)^{\frac{n+1}{2}}}\left|g_j(y',t-s)\right|dy'ds\lesssim \sqrt{t}\left\|g_j^\mathcal{T}\right\|_{L^{\infty}}\left\|g_j^\mathcal{S}\right\|_{L^{1}}.
\end{align*}
Therefore, we obtain for \(1\leq k \leq n-1\),
\begin{equation}\label{thm2w_k}
    \left\|\partial_k w_i\right\|_{L^{\infty}(Q_2^{+})}\lesssim\left\|g_j^\mathcal{T}\right\|_{L^{\infty}}\left\|g_j^\mathcal{S}\right\|_{L^{1}}.
\end{equation}
Next, we estimate the normal derivatives. We compute
\begin{align*}
    \partial_n w_i(x,t)&=-2\delta_{ij}\int_{0}^{t}\int_{\Sigma}\partial_n^2 \Gamma(x-y',s)g_j(y',t-s)ds-4\int_{0}^{t}\int_{\Sigma}\partial_n\partial_j C_j(x-y',s)g_j(y',t-s)dy'ds\\
    &=2\delta_{ij}\int_{0}^{t}\int_{\Sigma}\partial_n^2 \Gamma(x-y',s)g_j(y',t-s)ds-4\int_{0}^{t}\int_{\Sigma}\partial_j^2 C_n(x-y',s)g_j(y',t-s)dy'ds\\
    &-4\int_{0}^{t}\int_{\Sigma}\partial_j^2\partial_n B(x-y',s)g_j(y',t-s)dy'ds\\
    &=K_1+K_2+K_3.
\end{align*}
The term $K_1$ and $K_2$ are controlled as follows:
\begin{align*}
    |K_1|&\lesssim\int_{0}^{t}\int_{\Sigma}\frac{1}{(|x-y'|^2+s)^{\frac{n+2}{2}}}\left|g_j(y',t-s)\right|dy'ds\lesssim t\left\|g_j^\mathcal{T}\right\|_{L^{\infty}}\left\|g_j^\mathcal{S}\right\|_{L^{1}},\\
    |K_2|&\lesssim\int_{0}^{t}\int_{\Sigma}\frac{1}{s^{\frac{1}{2}}(|x-y'|^2+s)^{\frac{n+1}{2}}}\left|g_j(y',t-s)\right|dy'ds\lesssim \sqrt{t}\left\|g_j^\mathcal{T}\right\|_{L^{\infty}}\left\|g_j^\mathcal{S}\right\|_{L^{1}}.
\end{align*}
For \(K_3\), with the aid of Lemma \ref{lemma1}, we have
\begin{align*}
    K_3&=-4\int_{0}^{t}\int_{\Sigma}\partial_n\Gamma_1(x_n,s)(\partial_j E(x'-y',0)+J(x'-y',s))\partial_j g_j(y',t-s)dy'ds
\end{align*}
and thus, it follows that by Lemma \ref{integralestimates},
\begin{align*}
    |K_3|&\lesssim\int_{0}^{t}\int_{\Sigma}\left|\partial_n \Gamma_1(x_n,s)\right|\left(\left|\partial_j E(x'-y',0)\right|+|J(x'-y',s)|\right)\left|\partial_j g_j(y',t-s)\right| dy'ds\\
    &\lesssim \left\|g_j^\mathcal{T}\right\|_{L^{\infty}}\int_{0}^{t}\int_{A}\frac{x_n}{s}e^{-\frac{x_n^2}{4s}}\left(\frac{1}{|x'-y'|^{n-1}}+s^{\frac{1}{2}}\right)\left|\partial_j g_j^{\mathcal{S}}(y')\right|dy'ds\\
    &\lesssim \left\|g_j^\mathcal{T}\right\|_{L^{\infty}}\left\|\nabla g_j^\mathcal{S}\right\|_{L^{1}}\int_{0}^{t}\left(\frac{x_n}{s^{\frac{1}{2}}}e^{-\frac{x_n^2}{4s}}+\frac{x_n}{s}e^{-\frac{x_n^2}{4s}}\right)ds\\
    &\lesssim \left\|g_j^\mathcal{T}\right\|_{L^{\infty}}\left\|\nabla g_j^\mathcal{S}\right\|_{L^{1}}\left(1+x_n\log\left(1+\frac{t}{4x_n^2}\right)\right) \lesssim \left\|g_j^\mathcal{T}\right\|_{L^{\infty}}\left\|\nabla g_j^\mathcal{S}\right\|_{L^{1}}.
\end{align*}
This leads to conclude that
\begin{equation}\label{thm2w_n}
    \left\|\partial_n w_i\right\|_{L^{\infty}(Q_2^{+})}\lesssim \left\|g_j^\mathcal{T}\right\|_{L^{\infty}}\left\|g_j^\mathcal{S}\right\|_{W^{1,1}}.
\end{equation}
\\
$\bullet$\,\,\textbf{(Step 2) Estimate of \(\nabla^2 w_i\).}\,\,
We now consider the second derivatives of \(w_i\). We begin with two the tangential derivatives. If \(1\leq l,\, k \leq n-1\) we have that
\begin{align*}
    \partial_l\partial_k w_i&=-2\delta_{ij}\int_{0}^{t}\int_{\Sigma}\partial_l\partial_k\partial_n \Gamma(x-y',t-s)g_j(y',s)dy'ds-4\int_{0}^{t}\int_{\Sigma}\partial_l\partial_k\partial_j C_i(x-y',t-s)g_j(y',s)dy'ds\\
    &=I_1+I_2.
\end{align*}
We seperately estimate $I_1$ and $I_2$.
\begin{align*}
    |I_1|&\lesssim \int_{0}^{t}\int_{\Sigma}\frac{1}{(|x-y'|^2+s)^{\frac{n+3}{2}}}|g_j(y',t-s)|dy'ds\lesssim t\left\|g_j^\mathcal{T}\right\|_{L^{\infty}}\left\|g_j^\mathcal{S}\right\|_{L^{1}},\\
     |I_2|&\lesssim \int_{0}^{t}\int_{\Sigma}\frac{1}{s^{\frac{1}{2}}(|x-y'|^2+s)^{\frac{n+2}{2}}}|g_j(y',t-s)|dy'ds\lesssim \sqrt{t}\left\|g_j^\mathcal{T}\right\|_{L^{\infty}}\left\|g_j^\mathcal{S}\right\|_{L^{1}}.
\end{align*}
Thus we find that
\begin{equation}\label{thm2w_lk}
	\left\| \partial_l\partial_k w_i\right\|_{L^{\infty}(Q_2^{+})}\lesssim\left\|g_j^\mathcal{T}\right\|_{L^{\infty}}\left\|g_j^\mathcal{S}\right\|_{L^{1}}.
\end{equation}
We now estimate the second derivatives with one tangential derivative and one normal derivative.  If \(1\leq k\leq n-1\), we have that
\begin{align*}
    \partial_n\partial_k w_i&=-2\delta_{ij}\int_{0}^{t}\int_{\Sigma}\partial_k\partial_n^2 \Gamma(x-y',s)g_j(y',t-s)dy'ds-4\int_{0}^{t}\int_{\Sigma}\partial_k\partial_j\partial_n C_i(x-y',s)g_j(y',t-s)dy'ds\\
    &=J_1+J_2.
\end{align*}
For $J_1$, we note that
\begin{equation*}
    |J_1|\lesssim \int_{0}^{t}\int_{\Sigma}\frac{1}{(|x-y'|^2+s)^{\frac{n+3}{2}}}|g_j(y',t-s)|dy'ds \lesssim t\left\|g_j^\mathcal{T}\right\|_{L^{\infty}}\left\|g_j^\mathcal{S}\right\|_{L^{1}}.
\end{equation*}
For \(J_2\), we divide into the following cases: \(1\leq i\leq n-1\) and \(i=n\).\\
If \(1\leq i \leq n-1\), then
\begin{align*}
    J_2&=-4\int_{0}^{t}\int_{\Sigma}\partial_k\partial_j \left(\partial_i C_n(x-y',s)+\partial_i\partial_n B(x-y',s)\right)g_j(y',t-s)dy'ds\\
    &=-4\int_{0}^{t}\int_{\Sigma}\partial_k\partial_j\partial_i C_n(x-y',s)g_j(y',t-s)dy'ds-4\int_{0}^{t}\int_{\Sigma}\partial_i\partial_n B(x-y',s)\partial_k\partial_j g_j(y',t-s)dy'ds\\
    &=J_{21}^{(1)}+J_{22}^{(1)}.
\end{align*}
Continuing computations, we obtain
\begin{align*}
    |J_{21}^{(1)}|&\lesssim \int_{0}^{t}\int_{\Sigma}\frac{1}{s^{\frac{1}{2}}(|x-y'|^2+s)^{\frac{n+2}{2}}}|g_j(y',t-s)|dy'ds\lesssim \sqrt{t}\left\|g_j^\mathcal{T}\right\|_{L^{\infty}}\left\|g_j^\mathcal{S}\right\|_{L^{1}},\\
    |J_{22}^{(1)}|&\lesssim \left\|g_j^\mathcal{T}\right\|_{L^{\infty}}\left\|\nabla^2 g_j^\mathcal{S}\right\|_{L^{1}},
\end{align*}
where we last estimate can be obtained using the same argument for estimating \(K_3\) in the previous step.\\
If \(i=n\), then
\begin{align*}
    J_2&=-4\int_{0}^{t}\int_{\Sigma}\partial_k\partial_j\partial_n C_n(x-y',s)g_j(y',t-s)dy'ds\\
    &=4\sum_{l=1}^{n-1}\int_{0}^{t}\int_{\Sigma}\partial_k\partial_j\partial_l C_l(x-y',s)g_j(y',t-s)dy'ds+2\int_{0}^{t}\int_{\Sigma}\partial_k\partial_j\partial_n \Gamma(x-y',s)g_j(y',t-s)dy'ds\\
    &=J_{21}^{(2)}+J_{22}^{(2)}.
\end{align*}
We compute that
\begin{align*}
    |J_{21}^{(2)}|&\lesssim \int_{0}^{t}\int_{\Sigma}\frac{1}{s^{\frac{1}{2}}(|x-y'|^2+s)^{\frac{n+2}{2}}}|g_j(y,t-s)|dy'ds\lesssim\sqrt{t}\left\|g_j^\mathcal{T}\right\|_{L^{\infty}}\left\|g_j^\mathcal{S}\right\|_{L^{1}},\\
    |J_{22}^{(2)}|&\lesssim \int_{0}^{t}\int_{\Sigma}\frac{1}{(|x-y'|^2+s)^{\frac{n+3}{2}}}|g_j(y,t-s)|dy'ds\lesssim t\left\|g_j^\mathcal{T}\right\|_{L^{\infty}}\left\|g_j^\mathcal{S}\right\|_{L^{1}}.\\
\end{align*}
Thus we find that
\begin{equation}\label{w_nk}
\left\| \partial_n\partial_k w_i\right\|_{L^{\infty}(Q_2^{+})}\lesssim\left\|g_j^\mathcal{T}\right\|_{L^{\infty}}\left\|g_j^\mathcal{S}\right\|_{W^{2,1}}.
\end{equation}\\
Finally we now estimate the second derivatives with two normal derivatives.\\
If \(i=n\), using \(\nabla\cdot w=0\), we have that \(\partial_n^2 w_n=-\sum_{k=1}^{n-1}\partial_{n}\partial_{k}w_k\) and thus we have the \(L^\infty\)-boundedness of \(\partial_n^2 w_n\) from the previous case.\\
If \(i\neq n\), then
\begin{align*}
    \partial_n^2 w_i&=-2\delta_{ij}\int_{0}^{t}\int_{\Sigma}\partial_n^3 \Gamma(x-y',s)g_j(y',t-s)dy'ds-4\int_{0}^{t}\int_{\Sigma}\partial_n^2\partial_j C_i(x-y',s)g_j(y',t-s)dy'ds\\
    &=K_1+K_2.
\end{align*}
Continuing computations, we get
\begin{align*}
    |K_1|\lesssim \int_{0}^{t}\int_{\Sigma}\frac{1}{(|x-y'|^2+s)^{\frac{n+3}{2}}}|g_j(y',t-s)|dy'ds\lesssim t\left\|g_j^\mathcal{T}\right\|_{L^{\infty}}\left\|g_j^\mathcal{S}\right\|_{L^{1}}.
\end{align*}
For \(K_2\), we have
\begin{align*}
    K_2&=-4\int_{0}^{t}\int_{\Sigma}\partial_n\partial_j(\partial_i C_n(x-y',s)+\partial_i\partial_n B(x-y',s))g_j(y',t-s)dy'ds\\
    &=4\sum_{k=1}^{n-1}\int_{0}^{t}\int_{\Sigma}\partial_i\partial_j\partial_k C_k(x-y',s)g_j(y',t-s)dy'ds+2\int_{0}^{t}\int_{\Sigma}\partial_i\partial_j\partial_n\Gamma(x-y',s)g_j(y',t-s)dy'ds\\
    &-4\int_{0}^{t}\int_{\Sigma}\partial_n^2 B(x-y',s)\partial_i\partial_j g_j(y',t-s)dy'ds\\
    &=K_{21}^{(2)}+K_{22}^{(2)}+K_{23}^{(2)}.
\end{align*}
First two terms are estimated as follows:
\begin{align*}
    |K_{21}^{(2)}|&\lesssim \int_{0}^{t}\int_{\Sigma}\frac{1}{s^{\frac{1}{2}}(|x-y'|^2+s)^{\frac{n+2}{2}}}|g_j(y',t-s)|dy'ds\lesssim \sqrt{t}\left\|g_j^\mathcal{T}\right\|_{L^{\infty}}\left\|g_j^\mathcal{S}\right\|_{L^{1}},\\
    |K_{22}^{(2)}|&\lesssim \int_{0}^{t}\int_{\Sigma}\frac{1}{(|x-y'|^2+s)^{\frac{n+3}{2}}}|g_j(y',t-s)|dy'ds\lesssim t\left\|g_j^\mathcal{T}\right\|_{L^{\infty}}\left\|g_j^\mathcal{S}\right\|_{L^{1}}.
\end{align*}

For \(K_{23}^{(2)}\) we find that Lemma \ref{lemma1} gives
\begin{align*}
    K_{23}^{(2)}&=-4\int_{0}^{t}\int_{\Sigma}\partial_n\partial_i\partial_nB(x-y',s)\partial_jg_j(y',t-s)dy'ds\\
    &=4\int_{0}^{t}\int_{\Sigma}\partial_n^2\Gamma(x_n,s)(\partial_i E(x'-y',0)+J(x'-y',s))\partial_jg_j(y',t-s)dy'ds\\
    &=K_{231}^{(2)}+K_{232}^{(2)}.
\end{align*}
Here using calculations similar to one for the integral appeared in the estimate of first \(I_2\) in the proof of Theorem \ref{thm1} and Lemma \ref{integralestimates}, we find that
\begin{align*}
    \left|K_{232}^{(2)}\right|\lesssim \left(1+\log\left(1+\frac{t}{x_n^2}\right)\right)\left\|g_j^\mathcal{T}\right\|_{L^{\infty}}\left\|g_j^\mathcal{S}\right\|_{L^{1}}.
\end{align*}
It is shown in \cite{KangChang22}, that for any \(p\in(1,\infty)\), if \(g_j^{\mathcal{T}}\) satisfies the hypothesis given in Theorem \ref{thm2},  \(\left\|K_{231}^{(2)}\right\|_{L^{p}(Q_{\frac{1}{2}}^{+})}\) is unbounded. 
Our construction of such an example for \(g_j^\mathcal{T}\) satisfying the hypothesis is rather direct. We let \(g_j^\mathcal{T}\) to be the sum of all characteristic functions of the interval \(((2k+1)^{-a}, (2k)^{-a})\) where \(k\in \mathbb{Z}_+\) and \(a\in(0,1)\) is to be determined. Then since each interval above is pairwise disjoint, \(g_k^\mathcal{T}\in L^{\infty}(\mathbb{R})\) is immediate. 
To show that this function is not in the stated Besov space, we will use its standard integral characterization. Then we can see that the integral corresponding to \(g_k^{\mathcal{T}}\) is divergent for all \(a\in (0,1)\) for \(p \geq 3\) and for some \(a\) for \(p <3\). (see \cite[Theorem 1.1]{KangChang22} for more details).
Since we have \(\partial_n^2 w_i=K_1+K_{21}^{(2)}+K_{22}^{(2)}+K_{231}^{(2)}+K_{231}^{(2)}\), the required blow-up follows.
\qedsymbol


\section{Proof of Theorem \ref{thm3} for the Stokes equations}\label{sec5}
$\bullet$ \textbf{(Step 1) Estimate of \(w_i\).} 
We have that for \(1\leq k\leq n-1\),
\begin{align*}
    K_{ik}(x,t)=-2\delta_{ik}\partial_n \Gamma(x,t)-4\partial_k C_i(x,t).
\end{align*}
Thus, we remind that 
\begin{align*}
    w_i(x,t)&=-2\delta_{ik}\int_{0}^{t}\int_{|y'|\leq 1} \partial_n \Gamma(x-y',s) g_k(y',t-s)dy'ds-4\int_{0}^{t}\int_{|y'|\leq 1}\partial_k C_i(x-y',s)g_k(y',t-s)dy'ds\\
    &=I_1 +I_2.
\end{align*}
Assume first that \(1\leq i \leq n-1\). We first estimate \(I_1\). Since \(|x-y'|\geq \frac{|x|}{2}\) for \(|x|>2\), we find that for \(|x|>2\),
\begin{align*}
    |I_1|&\lesssim \int_{0}^{t}\int_{|y'|\leq 1}\frac{1}{(|x-y'|^2+s)^{\frac{n+1}{2}}}dy'ds\left\|g_k\right\|_{L^{\infty}}\\
    &\lesssim \frac{t}{|x|^{n+1}}\left\|g_k\right\|_{L^{\infty}}.
\end{align*}
For \(|x|<2\), we find that from Lemma \ref{integralestimates},
\begin{align*}
    |I_1|&\lesssim \int_{0}^{t}\int_{|y'|\leq 1}|\partial_n \Gamma(x-y',s)||g_k(y',t-s)|dy'ds\\
    &\lesssim \int_{0}^{t}|\partial_n \Gamma_1(x_n,s)|\int_{|y'|\leq 1}|\Gamma'(x'-y',s)|dy'ds\left\| g_k\right\|_{L^{\infty}}\\
    &\lesssim\int_{0}^{t}\frac{x_n}{s^\frac{3}{2}}e^{-\frac{x_n^2}{4s}}ds\left\| g_k\right\|_{L^{\infty}}\\
    &\lesssim \left\| g_k\right\|_{L^{\infty}}.
\end{align*}
Thus we obtain that
\begin{align*}
    |I_1|\lesssim \frac{1}{\left<x\right>^{n+1}}\left\|g_k\right\|_{L^{\infty}}.
\end{align*}
We now estimate \(I_2\). Since \(|x-y'|\leq 3\) for \(|x|<2\), we find that for \(|x|<2\),
\begin{align*}
    |I_2|&\lesssim\int_{0}^{t}\int_{|y'|\leq 1}\left| C_i(x-y',s)\right|\left|\partial_kg_k(y',t-s)\right|dy'ds\\
    &\lesssim \int_{0}^{t}\int_{|y'|\leq 1}\frac{1}{s^{\frac{1}{2}}(|x-y'|^2+s)^{\frac{n-1}{2}}}dy'ds \left\|\nabla_{x'} g_k\right\|_{L^{\infty}}\\
    &\lesssim \int_{|y'|\leq 1}\frac{1}{|x-y'|^{n-2}}\int_{0}^{\frac{t}{|x-y'|^2}}\frac{1}{u^\frac{1}{2}(u+1)^{\frac{n-1}{2}}} \left\|\nabla_{x'} g_k\right\|_{L^{\infty}}\\
    &\lesssim \int_{|y'|\leq 1}\frac{1}{|x-y'|^{n-2}} \left\|\nabla_{x'} g_k\right\|_{L^{\infty}}\\
    &\lesssim \int_{0}^{3}\frac{r^{n-2}}{(r+x_n)^{n-2}}dr\left\|\nabla_{x'} g_k\right\|_{L^{\infty}} \lesssim\left\|\nabla_{x'} g_k\right\|_{L^{\infty}}.
\end{align*}
For \(|x|>2\), we find that
\begin{align*}
    |I_2|&\lesssim \int_{0}^{t}\int_{|y'|\leq 1}\frac{1}{s^{\frac{1}{2}}(|x-y'|^2+s)^\frac{n}{2}}dy'ds\left\| g_k\right\|_{L^{\infty}}\\
    &\lesssim \int_{0}^{t}\frac{1}{s^{\frac{1}{2}}}\int_{|y'|\leq 1}\frac{1}{(|x|^{2}+s)^{\frac{n}{2}}}dy'ds\left\|g_k\right\|_{L^{\infty}}\lesssim\frac{\sqrt{t}}{|x|^{n}}\left\| g_k\right\|_{L^{\infty}}.
\end{align*}
Thus we obtain that
\begin{align*}
    |I_2|\lesssim \frac{\left\|g_k\right\|_{L^{\infty}}+\left\|\nabla_{x'} g_k\right\|_{L^{\infty}}}{\left<x\right>^{n}}.
\end{align*}
Hence we conclude that
\begin{equation}\label{w_i}
    |w_i(x,t)|\leq |I_1|+|I_2|\lesssim \frac{1}{\left<x\right>^{n}}\left(\left\| g_k\right\|_{L^{\infty}}+\left\|\nabla_{x'} g_k\right\|_{L^{\infty}}\right)
\end{equation}
for \(1\leq i \leq n-1\).\\
Now let \(i=n\). Then \(I_1=0\) and thus we only need to estimate \(I_2\), which enjoys the same estimate as that of \(I_2\) in the previous case.
Hence we find that
\begin{equation}\label{thm3w}
\left\|w\right\|_{L^{\infty}(Q_1^+)}\lesssim \frac{N_1}{\left<x\right>^n}.
\end{equation}
$\bullet$ \textbf{(Step 2) Estimate of \(\partial_j w_i\).}
We have that
\begin{align*}
    \partial_j w_i(x,t)&=-2\delta_{ik}\int_{0}^{t}\int_{|y'|\leq 1}\partial_j\partial_n \Gamma(x-y',s)g_k(y',t-s)dy'ds\\
    &-4\int_{0}^{t}\int_{|y'|\leq 1}\partial_j\partial_k C_i(x-y',s)g_k(y',t-s)dy'ds\\
    &=J_1+J_2.
\end{align*}
1. We first estimate \(J_1\). For \(j\leq n-1\) reminding that 
\begin{align*}
    J_1=-2\delta_{ik}\int_{0}^{t}\int_{|y'|\leq 1}\partial_n \Gamma(x-y',s)\partial_j g_k(y',t-s)dy'ds,
\end{align*}
and  using the same method for estimating \(I_1\), we get 
\begin{align}\label{thm3j1}
    |J_1|\lesssim \frac{t}{\left<x\right>^{n+2}}\left\|\nabla_{x'} g_k\right\|_{L^{\infty}}.
\end{align}
For \(j=n\), we have that for \(|x|<2\),
\begin{align*}
    J_1&=-2\delta_{ik}\int_{0}^{t}\int_{|y'|\leq 1}\partial_n^2 \Gamma(x-y',t)g_k(y',t-s)dy'ds\\
    &\lesssim -2\delta_{ik}\int_{0}^{t}\int_{|y'|\leq 1}\Gamma(x-y',s)\Delta_{y'}g_k(y',t-s)ds+\int_{0}^{t}\Gamma(x-y',s)\partial_t g_k(y',t-s)dy'ds.
\end{align*}
Here the first integral is bounded by \(\left\|\nabla^2 g_k\right\|_{L^{\infty}}\) and the second integral is bounded by \(\textup{LN}\left\|g_k\right\|_{L^{\infty}}\).

For \(|x|>2\), as done for \(I_1\), we find that
\begin{equation*}
    |J_1|\lesssim \frac{t\left\|g_k\right\|_{L^{\infty}}}{|x|^{n+2}}.
\end{equation*}
Thus we obtain
\begin{equation}\label{bigestimate}
    |J_1|\lesssim \frac{1}{\left<x\right>^{n+2}}\left[1+\mathbbm{1}_{|x|<2}\left\{\frac{1}{(x_n^2+|t-1|)^{\frac{1}{2}-a}}\mathbbm{1}_{0<a<\frac{1}{2}} + \left(1+\log \left(2+\frac{1}{x_n^2+|t-1|}\right)\right)\mathbbm{1}_{a=\frac{1}{2}}\right\}\right]\left\|g_k\right\|_{W^{2,\infty}}.
\end{equation}
2. We now estimate \(J_2\). We divide into the following cases:
\begin{enumerate}
    \item \(1\leq i, ~j \leq n-1\),
    \item \(i=n,~1\leq j\leq n-1\),
    \item \(1\leq i \leq n-1, j=n\),
    \item \(i=j=n\).
\end{enumerate}
(1) If \(1\leq i,~j \leq n-1\), then for \(|x|\geq2\),
\begin{align*}
    |J_2|&\lesssim\int_{0}^{t}\int_{|y'|\leq 1}|\partial_j\partial_k C_i(x-y',s)||g_k(y',t-s)|dy'ds\\
    &\lesssim \int_{0}^{t}\int_{|y'|\leq 1}\frac{1}{s^{\frac{1}{2}}(|x-y'|^2+s)^{\frac{n+1}{2}}}dy'ds\left\|g_k\right\|_{L^{\infty}}\lesssim \frac{\sqrt{t}}{|x|^{n+1}}\left\|g_k\right\|_{L^{\infty}},
\end{align*}

and for \(|x|<2\), using the same method for estimating \(I_2\), we get
\begin{align*}
    |J_2|\lesssim \left\|\nabla_{x'} g_k\right\|_{L^{\infty}}.
\end{align*}

Thus we see that for \(1\leq i,\, j \leq n-1\)
\begin{align*}
    |\partial_j w_i(x,t)|\leq |J_1|+|J_2|\lesssim \frac{\left\| g_k\right\|_{W^{1,\infty}}}{\left<x\right>^{n+1}}.
\end{align*}
(2) If \(i=n, ~1\leq j \leq n-1\), since \(C_i\) and \(C_n\) share the same estimates we obtain that by the same method of estimates in the previous case, we get that
\begin{align*}
    |\partial_j w_n(x,t)|\lesssim \frac{\left\| g_k\right\|_{W^{1,\infty}}}{\left<x\right>^{n+1}}.
\end{align*}
\\
(3) If \(1\leq i \leq n-1,~ j=n\), then 
\begin{align*}
    J_2&=-4\int_{0}^{t}\int_{|y'|\leq 1}\partial_n\partial_k C_i(x-y',s)g_k(y',t-s)dy'ds\\
    &=-4\int_{0}^{t}\int_{|y'|\leq 1}\partial_i\partial_k C_n(x-y',s)g_k(y',t-s)dy'ds-4\int_{0}^{t}\int_{|y'|\leq 1}\partial_i\partial_k\partial_n B(x-y',s)g_k(y',t-s)dy'ds\\
    &=J_{21}+J_{22}.
\end{align*}
For \(J_{21}\), using the same method of estimates in the case (1), we get that
\begin{align}\label{thm3j21}
    |J_{21}|\lesssim \frac{1}{\left<x\right>^{n+1}}(\left\| g_k\right\|_{L^{\infty}}+\left\|\nabla_{x'} g_k\right\|_{L^{\infty}}).
\end{align}
For \(J_{22}\), we find that if \(|x|\geq 2\), then using \(\partial_n B(x,t)=-\frac{x_n}{2t}B(x,t)\),
\begin{align*}
    |J_{22}|&\lesssim\int_{0}^{t}\int_{|y'|\leq 1}\partial_i\partial_k\partial_n B(x-y',s)g_k(y',t-s)dy'ds\\
    &=2\int_{0}^{t}\int_{|y'|\leq 1}\frac{x_n}{s}\partial_{i}\partial_j B(x-y',s)g_k(y',t-s)dy'ds.
\end{align*}
Thus we see that by Lemma \ref{integralestimates},
\begin{align*}
   |J_{22}|&\lesssim \int_{0}^{t}\int_{|y'|\leq 1}\frac{x_n}{s}\frac{e^{-\frac{x_n^2}{10s}}}{(|x-y'|^2+s)^{\frac{n}{2}}s^{\frac{1}{2}}}dy'ds\left\|g_k\right\|_{L^{\infty}}\\
    &=\int_{0}^{t}\frac{x_n e^{-\frac{x_n^2}{10s}}}{s^{\frac{3}{2}}}\int_{|y'|\leq 1}\frac{1}{(|x-y'|^2+s)^{\frac{n}{2}}}dy'ds \left\|g_k\right\|_{L^{\infty}}\\
   &\lesssim \frac{1}{|x|^n}\left\|g_k\right\|_{L^{\infty}}.
\end{align*}
If \(|x|<2\), using the same method as above and integration by parts, we find that
\begin{align*}
    |J_{22}|&\lesssim\left\|\nabla_{x'}g_k\right\|_{L^{\infty}}.
\end{align*}
Thus we get
\begin{align}\label{thm3j22}
    |J_{22}|\lesssim \frac{\left\|g_k\right\|_{W^{1,\infty}}}{\left<x\right>^{n}}.
\end{align}
Hence we conclude that
\begin{align}\label{thm31stderivfinal}
    |\partial_j w_n(x,t)|&\leq |J_1|+|J_{21}|+|J_{22}|\lesssim \left(\frac
    {1}{\left<x\right>^{n+1}}+\frac{1}{\left<x\right>^n}\right)\left\| g_k\right\|_{W^{1,\infty}}.
\end{align}
(4) If \(i=j=n\), then \(\nabla\cdot w=0\) gives the same estimate as in the case (1).\\
\\
\textbf{(Step 3) Estimate of \(\partial_l \partial_j w_i\).} \,\,We recall that
\begin{align*}
    \partial_l\partial_j w_i(x,t)&=-2\delta_{ik}\int_{0}^{t}\int_{|y'|\leq 1}\partial_l\partial_j\partial_n \Gamma(x-y',s)g_k(y',t-s)dy'ds\\
    &-4\int_{0}^{t}\int_{|y'|\leq 1} \partial_l\partial_j\partial_k C_i(x-y',s)g_k(y',t-s)dy'ds\\
    &=K_1+K_2.
\end{align*}
1. We first estimate \(K_1\). For \(|x|\geq 2\), we get
\begin{align*}
    |K_1|&\lesssim\int_{0}^{t}\int_{|y'|\leq 1}\frac{1}{(|x-y'|^2+s)^{\frac{n+3}{2}}}dy'ds\left\|g_k\right\|_{L^{\infty}} \lesssim \frac{\left\|g_k\right\|_{L^{\infty}}}{|x|^{n+3}}.
\end{align*}
For \(|x|<2\), if \(l,~j<n\), then
\begin{align*}
    |K_1|\lesssim\int_{0}^{t}\int_{|y'|\leq 1}\left|\partial_n \Gamma(x-y',s)\right|\left|\partial_l\partial_j g_k(y',t-s)\right|dy'ds\lesssim\left\|\nabla_{x'}^2 g_k\right\|_{L^{\infty}},
\end{align*}
where the last estimate follows from the same method for \(I_1\). \\
If \(l=n, ~j\neq n\) or \(l\neq n, ~j=n\), then using integration by part and performing the similar estimates leading \eqref{bigestimate}, we get
\begin{equation}
    |K_1|\lesssim \frac{1}{\left<x\right>^{n+2}}\left[1+\mathbbm{1}_{|x|<2}\left\{\frac{1}{(x_n^2+t-1)^{\frac{1}{2}-a}}\mathbbm{1}_{0<a<\frac{1}{2}} + \left(1+\log \left(2+\frac{1}{x_n^2+|t-1|}\right)\right)\mathbbm{1}_{a=\frac{1}{2}}\right\}\right]\left\|\nabla g_k\right\|_{W^{2,\infty}}.
\end{equation}
If \(l=j=n\), then using integration by parts,
\begin{align*}
    K_1&=-\int_{0}^{t}\int_{|y'|\leq 1}\partial_n\Gamma(x-y',s)\Delta_{y'}g_k(y',t-s)dy'ds+\int_{0}^{t}\int_{|y'|\leq 1}\partial_n \Gamma(x-y',s)\partial_t g_k(y',t-s)dy'ds\\
    &=-\int_{0}^{t}\partial_n\Gamma_1(x_n,s)(1-t+s)_+^{a}\int_{|y'|\leq 1}\Gamma'(x'-y',s)\Delta_{y'}g_k^{\mathcal{S}}(y')dy'ds\\
    &+a\int_{0}^{t}\partial_n \Gamma_1(x_n,s)(1-t+s)_+^{a-1}\int_{|y'|\leq 1}\Gamma'(x'-y',s)g_k^{\mathcal{S}}(y')dy' ds.
\end{align*}
Here the first integral is bounded by \(\left\|\nabla^2 g_k^\mathcal{S}\right\|_{L^{\infty}}\) and the second integral, if \(t<1\), it is bounded by
\begin{equation*}
    \frac{\log\left(1+\frac{16t}{x_n}\right)}{(x_n^2+1-t)^{1-a}}\left\|g_k^S\right\|_{L^{\infty}},
\end{equation*}
where we have used Lemma \ref{alg} and Lemma \ref{integralestimates} to get the above bounds. And if \(t>1\), then it is bounded by
\begin{align*}
    \int_{t-1}^{t}\frac{1}{s}e^{-\frac{x_n^2}{8s}}(1-t+s)^{a-1}ds&\lesssim\int_{t-1}^{t}\frac{1}{(x_n^2+s)(1-t+s)^{1-a}}ds\\
    &=\int_{0}^{1}\frac{du}{(x_n+t-1+u)u^{1-a}}\\
    &\lesssim \frac{1}{(x_n^2+t)^{a}(x_n^2+t-1)^{1-a}},
\end{align*}
where we have used the inequality \(e^{-\theta}\lesssim\frac{1}{1+\theta}\) and Lemma \ref{lemma2}.
Thus 
\begin{align}\label{thm32ndderivk1}
    |K_1|\lesssim \left(1+\frac{\log\left(1+\frac{16t}{x_n}\right)}{(x_n^2+1-t)^{1-a}}\mathbbm{1}_{t<1}+\frac{1}{(x_n^2+t)^a (x_n^2+t-1)^{1-a}}\mathbbm{1}_{t>1}\right)\left\|g_k\right\|_{W^{2,\infty}}.
\end{align}

2. We now estimate \(K_2\). We divide into the following cases:
\begin{enumerate}
    \item \( 1\leq i,~j,~l \leq n-1\),
    \item \(1\leq j,~l \leq n-1,~ i=n\),
    \item \(1\leq i,~j\leq n-1, l=n, \text{ or } 1\leq i,~l\leq n-1,~ j=n\)
    \item \(1\leq j\leq n-1, ~i=l=n, \text{ or } 1\leq l\leq n-1,~ i=j=n\),
    \item \(1\leq i\leq n-1,~j=l=n\),
    \item \(i=j=l=n\).
\end{enumerate}
Note that only the roles of \(j\) and \(l\) are switched for the respective subcases of the case (3) and (4). Thus the subcases of (3) and (4) will give the same estimate and therefore we only focus one of them respectively.

(1) If \(1\leq i,\,j,\,l \leq n-1\), using the same method for estimating \(I_2\) in \textbf{Step 1}, we obtain
\begin{align*}
    |K_2|\lesssim \frac{\left\|g_k\right\|_{W^{2,\infty}}}{\left<x\right>^{n+2}}.
\end{align*}

(2) If \(1\leq j,\,l \leq n-1,\, i=n\), since \(C_i\) and \(C_n\) share the same estimates we obtain that from the previous case,
\begin{align*}
    |K_2|\lesssim \frac{\left\|g_k\right\|_{W^{2,\infty}}}{\left<x\right>^{n+2}}.
\end{align*}

(3) If \(1\leq i,~j\leq n-1, l=n\), we have that
\begin{align*}
    K_2&=-4\int_{0}^{t}\int_{|y'|\leq 1}\partial_n \partial_j\partial_k C_i(x-y',s)g_k(y',t-s)dy'ds\\
    &=-4\int_{0}^{t}\int_{|y'|\leq 1}\partial_j\partial_k \left(\partial_i C_n(x-y',s)+\partial_i\partial_n B(x-y',s)\right)g_k(y',t-s)dy'ds\\
    &=K_{21}+K_{22}.
\end{align*}
Here \(K_{21}\) is estimated similarly as in the case (1), and we get that
\begin{align*}
    |K_{21}|\lesssim \frac{\left\|g_k\right\|_{W^{2,\infty}}}{\left<x\right>^{n+2}}.
\end{align*}
For \(K_{22}\) we find that using \(\partial_n B(x,t)=-\frac{x_n}{2t}B(x,t)\)
\begin{align*}
    K_{22}=2\int_{0}^{t}\int_{|y'|\leq 1}\frac{x_n}{s}\partial_j\partial_k\partial_i B(x-y',s)g_k(y',t-s)dy'ds.
\end{align*}
Then using the same method as estimating \(J_{22}\), we get that
\begin{align*}
    |K_{22}|\lesssim\frac{\left\|g_k\right\|_{W^{1,\infty}}}{\left<x\right>^{n+1}}.
\end{align*}

(4) If \(1\leq j\leq n-1, ~i=l=n\), we have that
\begin{align*}
    K_2&=4\int_{0}^{t}\int_{|y'|\leq 1}\sum_{p=1}^{n-1}\partial_j\partial_k\partial_pC_p(x-y',s)g_k(y',t-s)dy'ds\\
    &+2\int_{0}^{t}\int_{|y'|\leq 1}\partial_j\partial_k\partial_n \Gamma(x-y',s)g_k(y',t-s)dy'ds\\
    &=K_{23}+K_{24}.
\end{align*}
Here \(K_{24}\) shares the same estimate as that of \(K_1\) and we get
\begin{align*}
    |K_{24}|\lesssim \frac{\left\|g_k\right\|_{W^{2,\infty}}}{\left<x\right>^{n+3}}.
\end{align*}
And for \(K_{23}\), we see that it shares the same estimate as that of \(K_2\) in the case (1), and we thus have 
\begin{align*}
    |K_{23}|\lesssim \frac{\left\|g_k\right\|_{W^{2,\infty}}}{\left<x\right>^{n+2}}.
\end{align*}

(5) If \(1\leq i\leq n-1,~j=l=n\), we have that,
\begin{align*}
    K_{2}&=-4\int_{0}^{t}\int_{|y'|\leq 1}\partial_k\partial_n^2 C_i(x-y',s)g_k(y',t-s)dy'ds\\
    &=-4\int_{0}^{t}\int_{|y'|\leq 1}\partial_k\partial_n \left(\partial_i C_n(x-y',s)+\partial_i \partial_nB(x-y',s)\right)dy'ds\\
    &=-4\int_{0}^{t}\int_{|y'|\leq 1}\partial_k\partial_i\left(\sum_{p=1}^{n-1}\partial_p C_p(x-y',s)-\frac{1}{2}\partial_n \Gamma(x-y',s)\right)g_k(y',t-s)dy'ds\\
    &-4\int_{0}^{t}\int_{|y'|\leq 1}\partial_i\partial_k\partial_n^2 B(x-y',s)g_k(y',t-s)dy'ds\\
    &=K_{25}+K_{26}-4\int_{0}^{t}\int_{|y'|\leq 1}\partial_i\partial_k\partial_n^2 B(x-y',s)g_k(y',t-s)dy'ds\\
    &=K_{25}+K_{26}+4\int_{0}^{t}\int_{|y'|\leq 1}\partial_i\partial_k\left(-\sum_{p=1}^{n-1}\partial_p^2 B(x-y',s)+\partial_s B(x-y',s)\right)g_k(y',t-s)dy'ds\\
    &=K_{25}+K_{26}+K_{27}+K_{28}.
\end{align*}
Here \(K_{25}\) enjoys the same estimate as that of \(K_{23}\) and \(K_{26}\) enjoys the same estimate as that of \(K_{24}\). Thus we get that
\begin{align}\label{k_25,26}
    |K_{25}|+|K_{26}|\lesssim\frac{\left\|g_k\right\|_{W^{2,\infty}}}{\left<x\right>^{n+2}}.
\end{align}
Also \(K_{27}\) has the similar estimate as that of \(K_{22}\) and we get
\begin{align}\label{k_27}
    |K_{27}|\lesssim \frac{\left\|g_k\right\|_{W^{2,\infty}}}{\left<x\right>^{n+2}}.
\end{align}
We finally estimate \(K_{28}\). We have that
\begin{align*}
    K_{28}&=4\int_{0}^{t}\int_{|y'|\leq 1} \partial_i\partial_k B(x-y',s)\partial_tg_k(y',t-s)dy'ds.
\end{align*}

First assume that \(|x|\geq 2\) and \(t<1\). 

We then have that using Lemma \ref{alg}, and Lemma \ref{lemma2},
\begin{align*}
    |K_{28}|&\lesssim \int_{0}^{t}\int_{|y'|\leq 1}\frac{e^{-\frac{x_n^2}{10s}}}{s^{\frac{1}{2}}(|x-y'|^2+s)^{\frac{n}{2}}}\left|\partial_t g_k(y',t-s)\right|dy'ds\\
    &\lesssim \int_{0}^{t}\int_{|y'|\leq 1}\frac{e^{-\frac{x_n^2}{10s}}}{s^{\frac{1}{2}}(|x-y'|^2+s)^{\frac{n}{2}}}(1-t+s)^{a-1}dy'ds\left\| g_k^{\mathcal{S}}\right\|_{L^{\infty}}\\
    &\lesssim\frac{1}{|x|^n}\int_{0}^{t}\frac{e^{-\frac{x_n^2}{10s}}}{s^{\frac{1}{2}}(1-t+s)^{1-a}}ds\left\| g_k^{\mathcal{S}}\right\|_{L^{\infty}}\lesssim \frac{1}{|x|^n}\int_{0}^{t}\frac{1}{s^{\frac{1}{2}}(x_n^2+1-t+s)^{1-a}}ds\left\| g_k^{\mathcal{S}}\right\|_{L^{\infty}}\\
    &\lesssim\frac{1}{|x|^n}\frac{1}{x_n+1}\left(\log\left(2+\frac{1}{x_n^2+|t-1|}\right)\mathbbm{1}_{a=\frac{1}{2}}+\frac{1}{(x_n^2+|1-t|)^{\frac{1}{2}-a}}\mathbbm{1}_{a<\frac{1}{2}}\right)\left\| g_k^{\mathcal{S}}\right\|_{L^{\infty}}\\
    &=\frac{\textup{LN}\left\| g_k^{\mathcal{S}}\right\|_{L^{\infty}}}{|x|^n(x_n+1)}\lesssim\frac{\textup{LN}\left\| g_k^{\mathcal{S}}\right\|_{L^{\infty}}}{|x|^n(x_n+1)^{2a}}, \qquad(\because 2a\leq 1).
\end{align*}
Assuming \(|x|\geq 2\), \(t>1\), since \(g_k(y',t-s)\) is supported in \(t-1<s<t\), it follows that
\begin{align*}
    |K_{28}|&\lesssim \int_{t-1}^{t}\int_{|y'|\leq 1}\frac{e^{-\frac{x_n^2}{10s}}}{s^{\frac{1}{2}}(|x-y'|^2+s)^{\frac{n}{2}}}\left|\partial_t g_k(y',t-s)\right|dy'ds\\
    &\lesssim \int_{t-1}^{t}\int_{|y'|\leq 1}\frac{e^{-\frac{x_n^2}{10s}}}{s^{\frac{1}{2}}(|x-y'|^2+s)^{\frac{n}{2}}}(1-t+s)^{a-1}dy'ds\left\| g_k^{\mathcal{S}}\right\|_{L^{\infty}}\\
    &\lesssim\frac{1}{|x|^n}\int_{t-1}^{t}\frac{e^{-\frac{x_n^2}{10s}}}{s^{\frac{1}{2}}(1-t+s)^{1-a}}ds\left\| g_k^{\mathcal{S}}\right\|_{L^{\infty}}\lesssim \frac{1}{|x|^n}\int_{t-1}^{t}\frac{1}{(x_n^2+1-t+s)^{1-a}(x_n^2+s)^\frac{1}{2}}ds\left\| g_k^{\mathcal{S}}\right\|_{L^{\infty}}\\
    &\leq\frac{1}{|x|^n}\int_{0}^{1}\frac{u^{a-1}}{(x_n^2+u+t-1)^{\frac{1}{2}}}du\left\| g_k^{\mathcal{S}}\right\|_{L^{\infty}}\lesssim \frac{\textup{LN}}{|x|^n(x_n+1)^{2a}}\left\| g_k^{\mathcal{S}}\right\|_{L^{\infty}},
\end{align*}
where we have used the estimate \(e^{-\frac{x_n^2}{10s}}\lesssim \left(\frac{s}{x_n^2+s}\right)^\frac{1}{2}\).\\
Assume \(|x|<2\) and \(n\geq 3\). By performing similar calculations as above,
\begin{align*}
    |K_{28}|&\lesssim\int_{0}^{t}\int_{|y'|\leq 1}|B(x-y',s)||\partial_i\partial_k\partial_t g_k(y',t-s)|dy'ds\\
    &\lesssim\int_{(t-1)_{+}}^{t}\int_{|y'|\leq 1}\frac{e^{-\frac{x_n^2}{10s}}}{(|x-y'|^2+s)^{\frac{n-2}{2}}s^{\frac{1}{2}}}(1-t+s)^{a-1}dy'ds\left\|\nabla^2g_k^{\mathcal{S}}\right\|_{L^{\infty}}\\
    &\lesssim\int_{(t-1)_{+}}^{t}\frac{e^{-\frac{x_n^2}{10s}}}{\sqrt{s}}(1-t+s)^{a-1}ds\left\|\nabla^2g_k^{\mathcal{S}}\right\|_{L^{\infty}}\\
    &\lesssim \textup{LN} \left\|\nabla^2g_k^{\mathcal{S}}\right\|_{L^{\infty}}.
\end{align*}
Finally assume \(|x|<2\) and \(n=2\). Then, this is essentially the same integral as \(I_{22}\) in (3.26) of \cite{KangTsai22FE} and the result follows by following the proof of Proposition 3.1 in \cite{KangTsai22FE}.\\

(6) If \(i=j=l=n\) then \(\nabla\cdot w=0\) gives that it reduces to the case (5).\\

\textbf{(Step 4) \(L^p\) estimates.}\,\, 1. The estimate of \(u\) is immediate from its pointwise bound.\\

2. For the estimate of \(\nabla u\), we only need to consider the term \(\mathbbm{1}_{|x|<2}\textup{LN}\) from the pointwise bound of \(\nabla u\) as the other terms can be bounded by the bounds of \(u\), which belong to \(L^p(\mathbb{R}_+^n\times (0,2))\) if and only if \(p\in (1,\infty]\).\\
If \(a<\frac{1}{2}\), then
\begin{align*}
    \int_{0}^2\int_{|x|<2}|\textup{LN}|^p dxdt&\lesssim \int_{0}^1\int_{0}^2 (x_n^2+t)^{p\left(a-\frac{1}{2}\right)}dtdx_n\\
    &=\frac{1}{1+p\left(a-\frac{1}{2}\right)}\int_{0}^2\left((x_n^2+1)^{p\left(a-\frac{1}{2}\right)+1}-x_n^{2p\left(a-\frac{1}{2}\right)+2}\right)dx_n.
\end{align*}
If \(1+p\left(a-\frac{1}{2}\right)\geq 0\), then the integral is obviously finite and if \(1+p\left(a-\frac{1}{2}\right)<0\), then the integral converges if and only if \(2p\left(a-\frac{1}{2}\right)+2>-1\), i.e., \(p<\frac{3}{2a-1}\). Thus the integral is convergent if and only if \(p<\frac{3}{2a-1}\).\\
If \(a=\frac{1}{2}\), then
\begin{align*}
    \int_{0}^2\int_{|x|<2}|\textup{LN}|^p dxdt&\lesssim \int_{0}^2\int_{0}^1 \left|\log\left(2+\frac{1}{x_n^2+t}\right)\right|^p dtdx_n\\
    &\lesssim \int_{0}^2\left|\log\left(2+\frac{1}{x_n^2}\right)\right|^p dx_n,
\end{align*}
which is finite for all \(p\geq 1\).\\
3. For the estimate of \(\nabla^2 u\), we only need to consider the estimates of the followings
\begin{equation}\label{tobeestimated}
    \frac{\textup{LN}}{\left<x\right>^{n}(x_n+1)^{2a}}+\mathbbm{1}_{|x|<2}\frac{\log\left(1+\frac{t}{x_n}\right)}{(x_n^2+|t-1|)^{1-a}},\qquad\textup{for } n\geq 3,
\end{equation}
and 
\begin{equation}\label{tobeestimated2}
    \textup{LN}\log\left(2+\frac{1}{x_2^2+|t-1|}\right),\qquad\textup{for } n=2,
\end{equation}
since the other terms appearing in the pointwise estimate are previously estimated.\\
In this proof, we shall estimate the second term in \eqref{tobeestimated} only, which is locally the most singular. We have that
\begin{align*}
    \int_{0}^2\int_{0}^2\frac{\log^p\left(1+\frac{t}{x_n}\right)}{(x_n^2+|t-1|)^{(1-a)p}}dtdx_n &\leq \int_0^2\int_0^2\frac{\log^p\left(1+\frac{2}{x_n}\right)}{(x_n^2+t)^{(1-a)p}} dtdx_n\\
    &\lesssim \frac{1}{1-(1-a)p}\int_0^2 \log^p\left(1+\frac{2}{x_n}\right)\left((x_n^2+2)^{1-(1-a)p}-x_n^{2-2(1-a)p}\right)dx_n.
\end{align*}
Similarly as the above calculations, we find that the integral above is finite if and only if \(p<\frac{3}{2(1-a)}\).\\

\textbf{(Step 5) Pressure estimate.} \,\,
We next estimate the pressure. We remind that
\begin{align*}
    p(x,t)&=2\partial_k\partial_n \int_{\Sigma}E(x-y')g_k(y',t)dy'-4(\partial_t-\Delta_{x'})\int_{-\infty}^{\infty}\int_{\Sigma}\partial_k A(x-y',t-s)g_k(y',s)dy'ds\\
    &=I_1+I_2.
\end{align*}
For \(I_1\) we note that
\begin{align*}
    I_1=\int_{\Sigma}2\partial_n E(x-y')\partial_kg_k^{\mathcal{S}}(y')dy'g_{k}^{\mathcal{T}}(t)
\end{align*}
and the above integral is the solution of the following Dirichlet problem:

\begin{equation*}
\begin{dcases}
\begin{aligned}
  \Delta u&=0& &\text{in  } \mathbb{R}_{+}^{n},\\
  u(x',0)&= \partial_k g_k^{\mathcal{S}}(x') & &\text{on    } \Sigma, 
\end{aligned}
\end{dcases}
\end{equation*}
and thus we have the estimate
\begin{align*}
    |I_1|\lesssim \left\|\nabla_{x'}g_k\right\|_{L^{\infty}}\mathbbm{1}_{\frac{1}{4}\leq t\leq 1}(t).
\end{align*}
For \(|x|\geq 2\), we find that since \(|x-y'|\leq \frac{|x|}{2}\) for \(|y'|\leq 1\),
\begin{align*}
    |I_1|\lesssim \int_{|y'|\leq 1}\frac{1}{|x-y'|^{n}}dy'\left\|g_k\right\|_{L^{\infty}}\mathbbm{1}_{\frac{1}{4}\leq t\leq 1}(t)\lesssim\mathbbm{1}_{\frac{1}{4}\leq t\leq 1} \frac{1}{|x|^n}\left\|g_k\right\|_{L^{\infty}}
\end{align*}
and thus we have that
\begin{align}\label{pressureI_1}
    |I_1|\lesssim\mathbbm{1}_{\frac{1}{4}\leq t\leq 1}\frac{1}{\left<x\right>^{n}}\left\|\nabla_{x'}g_k\right\|_{L^{\infty}}.
\end{align}

For \(I_2\), we divide into the following cases:
\begin{enumerate}
    \item \( 0<t<\frac{1}{4}\),
    \item \(\frac{1}{4}\leq t<1\),
    \item \(t\geq 1\).
\end{enumerate}

(1) If \(0<t<\frac{1}{4}\) then since \(\text{supp}(g_k^\mathcal{T})\subset\left[\frac{1}{4}, 1\right]\) and \(A(x,t)=0\) for \(t<0\), we find that
\begin{align*}
    \int_{-\infty}^{\infty}\int_{\Sigma}\partial_k A(x-y',t-s)g_k(y',s)dy'ds
    =\int_{\frac{1}{4}}^{\infty}\int_{\Sigma}\partial_k A(x-y',t-s)g_k^{\mathcal{S}}(y')g_k^{\mathcal{T}}(s)dy'ds=0,
\end{align*}
and thus \(I_2=0\).\\

(2) If \(\frac{1}{4}\leq t <1\), then
\begin{align}\label{t<1start}
   \nonumber |I_2|&=\left|\int_{-\infty}^{\infty}\int_{\Sigma}\partial_k A(x-y',t-s)\left(g_k^{\mathcal{S}}(y')\partial_s g_k^{\mathcal{T}}(s)-\Delta_{y'}g_k^{\mathcal{S}}(y')g_k^{\mathcal{T}}(s)\right)dy'ds\right|\\
   \nonumber &\leq \int_{-\infty}^{\infty}\int_{\Sigma}\left|\partial_k A(x-y',t-s)\right|\left(\left|g_k^{\mathcal{S}}(y')\right|\left|\partial_s g_k^{\mathcal{T}}(s)\right|+\left|\Delta_{y'}g_k^{\mathcal{S}}(y')\right|\left|g_k^{\mathcal{T}}(s)\right|\right)dy'ds\\
   \nonumber &\lesssim \int_{\frac{1}{4}}^{t}\int_{|y'|\leq 1}\frac{1}{(t-s)^{\frac{1}{2}}(|x-y'|^2+(t-s))^{\frac{n-1}{2}}}dy'ds\times \left\|g_k^{\mathcal{T}}\right\|_{L^{\infty}} \left\|\nabla_{x'}^2g_k^{\mathcal{S}}\right\|_{L^{\infty}}\\
    &+\int_{\frac{1}{4}}^{t}\int_{|y'|\leq 1}\frac{1}{(t-s)^{\frac{1}{2}}(|x-y'|^2+(t-s))^{\frac{n-1}{2}}}\left|\partial_s g_k^{\mathcal{T}}(s)\right|dy'ds\times\left\|g_k^{\mathcal{S}}\right\|_{L^{\infty}}.
\end{align}

We first claim the following estimate, which will be used throughout this proof.
\begin{equation}\label{claimineq}
    \int_{|y'|\leq 1}\frac{1}{(|x-y'|^2+(t-s))^{\frac{n-1}{2}}}dy'\lesssim \mathbbm{1}_{|x|<2}\frac{\log\left(2+\frac{1}{\sqrt{t-s}}\right)}{(|x|+\sqrt{t-s}+1)^{n-1}}+\mathbbm{1}_{|x|>2}\frac{1}{|x|^{n-1}}.
\end{equation}
Indeed, if \(|x|<2\), then using that \(|y'|\leq 1\) implies \(|x-y'|\leq |x|+1\) and Lemma \ref{lemma2}, we see that
\begin{align*}
    \int_{|y'|\leq 1}\frac{1}{(|x-y'|^2+(t-s))^{\frac{n-1}{2}}}dy'&\lesssim\int_{|x-y'|\leq |x|+1}\frac{1}{(|x-y'|^2+(t-s))^{\frac{n-1}{2}}}dy'\\
    &\lesssim\int_{0}^{|x|+1}\frac{r^{n-2}}{(r+\sqrt{t-s})^{n-1}}dr\\
    &\lesssim \frac{(|x|+1)^{n-1}}{(|x|+\sqrt{t-s}+1)^{n-1}}\left(1+\log_{+}\frac{|x|+1}{\sqrt{t-s}}\right)\\
    &\lesssim\frac{\log\left(2+\frac{1}{\sqrt{t-s}}\right)}{(|x|+\sqrt{t-s}+1)^{n-1}},
\end{align*}
where in the last inequality we used the simple inequality \(1+\log_{+}(3a)\leq 4\log(a+2)\) for \(a>0\).

If \(|x|\geq 2\), then using \(|x-y'|\geq \frac{|x|}{2}\), we find that
\begin{align*}
    \int_{|y'|\leq 1}\frac{1}{(|x-y'|^2+(t-s))^{\frac{n-1}{2}}}dy'&\lesssim \frac{1}{|x|^{n-1}}.
\end{align*}
This proves the claim \eqref{claimineq}.\\

i) If \(t\leq \frac{1}{2}\), then the term \(|\partial_s g_k^{\mathcal{T}}(s)|\) is bounded and thus using (\ref{claimineq}), we have that for any \(\epsilon>0\), both integrals of the RHS of \eqref{t<1start} are bounded by
\begin{align*}
    \int_{\frac{1}{4}}^{t}\int_{|y'|\leq 1}\frac{1}{(t-s)^{\frac{1}{2}}(|x-y'|^2+(t-s))^{\frac{n-1}{2}}}&dy'ds
    \lesssim\mathbbm{1}_{|x|<2}\int_{\frac{1}{4}}^t\frac{1}{(t-s)^{\frac{1}{2}}}\frac{\log\left(2+\frac{1}{\sqrt{t-s}}\right)}{(|x|+1+\sqrt{t-s})^{n-1}}ds\\
    &+\mathbbm{1}_{|x|>2}\int_{\frac{1}{4}}^{t}\frac{1}{(t-s)^{\frac{1}{2}}}\frac{1}{|x|^{n-1}}ds\\
    &\lesssim\mathbbm{1}_{|x|<2}\int_{0}^{\sqrt{t-\frac{1}{4}}}\frac{\log\left(2+\frac{1}{u}\right)}{(|x|+1+u)^{n-1}}du+\mathbbm{1}_{|x|>2}\frac{1}{|x|^{n-1}}\\
    &\lesssim \mathbbm{1}_{|x|<2}\int_{0}^{\sqrt{t-\frac{1}{4}}}\log\left(2+\frac{1}{u}\right)du+\mathbbm{1}_{|x|>2}\frac{1}{|x|^{n-1}}\\
    &\lesssim \mathbbm{1}_{|x|<2}\left(t-\frac{1}{4}\right)^{\frac{1-\epsilon}{2}}+\mathbbm{1}_{|x|>2}\frac{1}{|x|^{n-1}}.
\end{align*}
Thus we have that for \(t\leq \frac{1}{2}\),
\begin{equation}\label{pressurei21}
    |I_2|\lesssim\mathbbm{1}_{|x|<2}+\mathbbm{1}_{|x|>2}\frac{1}{|x|^{n-1}}.
\end{equation}

ii) We assume \(t\geq \frac{1}{2}\). Then, we have that the first integral in the RHS of \eqref{t<1start} is bounded by
\begin{align*}
    \mathbbm{1}_{|x|<2}\int_{\frac{1}{4}}^{t}\frac{\log\left(2+\frac{1}{\sqrt{t-s}}\right)}{(t-s)^{\frac{1}{2}}(|x|+1+\sqrt{t-s})^{n-1}}ds&+\mathbbm{1}_{|x|>2}\frac{1}{|x|^{n-1}}\int_{\frac{1}{4}}^t \frac{1}{\sqrt{t-s}}ds\\
    &\lesssim\mathbbm{1}_{|x|<2} \int_{0}^{\sqrt{t-\frac{1}{4}}}\log\left(2+\frac{1}{u}\right)du+\mathbbm{1}_{|x|>2}\frac{1}{|x|^{n-1}}\\
    &\lesssim \mathbbm{1}_{|x|<2} +\mathbbm{1}_{|x|>2}\frac{1}{|x|^{n-1}}.
\end{align*}
Also the second integral in the RHS of \eqref{t<1start} is bounded by
\begin{align}\label{interm}
    \nonumber \int_{\frac{1}{4}}^{t}\int_{|y'|\leq 1}&\frac{1}{(t-s)^{\frac{1}{2}}(|x-y'|^2+(t-s))^{\frac{n-1}{2}}} \left|\partial_s g_k^{\mathcal{T}}(s)\right|dy'ds\\
    \nonumber &=\int_{\frac{1}{4}}^{\frac{1}{2}}\int_{|y'|\leq 1}\frac{1}{(t-s)^{\frac{1}{2}}(|x-y'|^2+(t-s))^{\frac{n-1}{2}}} \left|\partial_s g_k^{\mathcal{T}}(s)\right|dy'ds\\
    \nonumber\nonumber &+\int_{\frac{1}{2}}^{t}\int_{|y'|\leq 1}\frac{1}{(t-s)^{\frac{1}{2}}(|x-y'|^2+(t-s))^{\frac{n-1}{2}}} \left|\partial_s g_k^{\mathcal{T}}(s)\right|dy'ds\\
    \nonumber &\lesssim \mathbbm{1}_{|x|<2}\frac{1}{(1-t)^{\frac{1+\epsilon}{2}-a}}+\mathbbm{1}_{|x|>2}\frac{1}{|x|^{n-1}}\\
     &+\mathbbm{1}_{|x|<2}\int_{\frac{1}{2}}^{t}\frac{1}{(t-s)^{\frac{1}{2}}}\frac{\log\left(2+\frac{1}{\sqrt{t-s}}\right)}{(|x|+1+\sqrt{t-s})^{n-1}}\frac{1}{(1-s)^{1-a}} ds+\mathbbm{1}_{|x|>2}\int_{\frac{1}{2}}^{t}\frac{1}{(t-s)^{\frac{1}{2}}(1-s)^{1-a}}ds\frac{1}{|x|^{n-1}},
\end{align}
where the last inequality follows since the integral with the range \(s\in \left[\frac{1}{4},\frac{1}{2}\right]\) is bounded by the integral estimated in the case \(t\leq \frac{1}{2}\).
To estimate the first integral on the RHS of \eqref{interm} (thus we may assume that \(|x|<2\) here), we use the estimate \(\log(1+x)\leq C_\epsilon x^\epsilon\) for any \(0<\epsilon<1\) to get
\begin{align*}
    \int_{\frac{1}{2}}^{t}\frac{1}{(t-s)^{\frac{1}{2}}}&\frac{\log\left(2+\frac{1}{\sqrt{t-s}}\right)}{(|x|+1+\sqrt{t-s})^{n-1}} \frac{1}{(1-s)^{1-a}} ds\\
    &= \int_{0}^{\sqrt{t-\frac{1}{2}}}\frac{\log\left(2+\frac{1}{u}\right)}{(|x|+1+u)^{n-1}}\frac{1}{(u^2+1-t)^{1-a}}du\\
    &\lesssim \int_{0}^{\sqrt{t-\frac{1}{2}}}\frac{\left(1+\frac{1}{u}\right)^\epsilon}{(u+\sqrt{1-t})^{2-2a}} du\lesssim\frac{1}{(1-t)^{\frac{1+\epsilon}{2}-a}}
\end{align*}
and the second integral on the RHS of \eqref{interm} is bounded by \(\textup{LN}^*\).

Thus we conclude that
\begin{align}\label{pressurei22}
    |I_2|\lesssim \mathbbm{1}_{|x|<2}\left(1+\frac{1}{(1-t)^{\frac{1+\epsilon}{2}-a}}\right) +\mathbbm{1}_{|x|>2}\left(\frac{1}{|x|^{n-1}}+\frac{\textup{LN}^*}{|x|^{n-1}}\right).
\end{align}
~3) Finally if \(t\geq 1\), then as in the previous case,
\begin{align}\label{pressurei2t>1}
    |I_2|&\lesssim \int_{\frac{1}{4}}^{1}\int_{|y'|\leq 1}\frac{1}{(t-s)^{\frac{1}{2}}(|x-y'|^2+(t-s))^{\frac{n-1}{2}}}dy'ds\times \left\|g_k^{\mathcal{T}}\right\|_{L^{\infty}} \left\|\nabla_{x'}^2g_k^{\mathcal{S}}\right\|_{L^{\infty}}\\
    \nonumber&+\int_{\frac{1}{4}}^{1}\int_{|y'|\leq 1}\frac{1}{(t-s)^{\frac{1}{2}}(|x-y'|^2+(t-s))^{\frac{n-1}{2}}}\left|\partial_s g_k^{\mathcal{T}}(s)\right|dy'ds\times\left\|g_k^{\mathcal{S}}\right\|_{L^{\infty}}.
\end{align}
Here the first integral in the RHS of (\ref{pressurei2t>1}) can be estimated as 
\begin{align*}
    \int_{\frac{1}{4}}^{1}\int_{|y'|\leq 1}&\frac{1}{(t-s)^{\frac{1}{2}}(|x-y'|^2+(t-s))^{\frac{n-1}{2}}}dy'ds\lesssim\int_{\frac{1}{4}}^1\frac{1}{(t-s)^{\frac{1}{2}}}\frac{\log\left(2+\frac{1}{\sqrt{t-s}}\right)}{(|x|+1+\sqrt{t-s})^{n-1}}ds\\
    &\lesssim \int_{\sqrt{t-1}}^{\sqrt{t-\frac{1}{4}}}\frac{\log\left(2+\frac{1}{u}\right)}{(|x|+u+1)^{n-1}}du\lesssim \frac{1}{\left<x\right>^{n-1}}\int_{\sqrt{t-1}}^{\sqrt{t-\frac{1}{4}}}\log\left(2+\frac{1}{u}\right)du\\
    &\lesssim \frac{1}{\left<x\right>^{n-1}}\int_{\sqrt{t-1}}^{\sqrt{t-\frac{1}{4}}}\left(1+\frac{1}{u}\right)^{\epsilon}du\lesssim \frac{1}{\left<x\right>^{n-1}}\int_{\sqrt{t-1}}^{\sqrt{t-\frac{1}{4}}}\frac{1}{u^{\epsilon}}du\\
    &\lesssim \frac{\left(t-\frac{1}{4}\right)^{\frac{1-\epsilon}{2}}}{\left<x\right>^{n-1}}.
\end{align*}
For the second integral in the RHS of (\ref{pressurei2t>1}), we have that
\begin{equation}\label{secondof5.14}
\begin{split}
        \int_{\frac{1}{4}}^{1}\int_{|y'|\leq 1}&\frac{1}{(t-s)^{\frac{1}{2}}(|x-y'|^2+(t-s))^{\frac{n-1}{2}}}\left|\partial_s g_k^{\mathcal{T}}(s)\right|dy'ds\\
    &\lesssim\int_{\frac{1}{4}}^{\frac{1}{2}}\int_{|y'|\leq 1}\frac{1}{(t-s)^{\frac{1}{2}}(|x-y'|^2+(t-s))^{\frac{n-1}{2}}}\left|\partial_s g_k^{\mathcal{T}}(s)\right|dy'ds\\
    &+\int_{\frac{1}{2}}^{1}\int_{|y'|\leq 1}\frac{1}{(t-s)^{\frac{1}{2}}(|x-y'|^2+(t-s))^{\frac{n-1}{2}}}\left|\partial_s g_k^{\mathcal{T}}(s)\right|dy'ds.
\end{split}
\end{equation}
The first integral of \eqref{secondof5.14} can be estimated as follows: for any \(\epsilon>0\),
\begin{align*}
    \int_{\frac{1}{4}}^{\frac{1}{2}}\int_{|y'|\leq 1}&\frac{1}{(t-s)^{\frac{1}{2}}(|x-y'|^2+(t-s))^{\frac{n-1}{2}}}\left|\partial_s g_k^{\mathcal{T}}(s)\right|dy'ds\\
    &\lesssim \int_{\frac{1}{4}}^{\frac{1}{2}}\frac{1}{(t-s)^{\frac{1}{2}}}\frac{\log\left(2+\frac{1}{\sqrt{t-s}}\right)}{(|x|+1+\sqrt{t-s})^{n-1}}ds\\
    &\lesssim \frac{1}{\left<x\right>^{n-1}}\int_{\sqrt{t-\frac{1}{2}}}^{\sqrt{t-\frac{1}{4}}}\log \left(2+\frac{1}{u}\right)du\lesssim  \frac{1}{\left<x\right>^{n-1}}\int_{\sqrt{t-\frac{1}{2}}}^{\sqrt{t-\frac{1}{4}}}\frac{1}{v^{\epsilon}}dv\\
    &\lesssim \frac{(t-\frac{1}{2})^{\frac
    {1-\epsilon}{2}}}{\left<x\right>^{n-1}}.
\end{align*}\\
The second integral of \eqref{secondof5.14} can be estimated as follows:
\begin{align*}
    \int_{\frac{1}{2}}^{1}\int_{|y'|\leq 1}&\frac{1}{(t-s)^{\frac{1}{2}}(|x-y'|^2+(t-s))^{\frac{n-1}{2}}}\left|\partial_sg_k^{\mathcal{T}}(s)\right|dy'ds\\
    &\lesssim\int_{\frac{1}{2}}^{1}\int_{|y'|\leq 1}\frac{1}{(t-s)^{\frac{1}{2}}(|x-y'|^2+(t-s))^{\frac{n-1}{2}}(1-s)^{1-a}}dy'ds\\
    &\lesssim\int_{\frac{1}{2}}^{1}\frac{1}{(t-s)^{\frac{1}{2}}}\frac{\log\left(2+\frac{1}{\sqrt{t-s}}\right)}{(|x|+\sqrt{t-s}+1)^{n-1}}\frac{1}{(1-s)^{1-a}}ds\mathbbm{1}_{|x|<2}+\int_{\frac{1}{2}}^{1}\frac{1}{(t-s)^{\frac{1}{2}}(1-s)^{1-a}}ds\frac{1}{|x|^{n-1}}\mathbbm{1}_{|x|>2}\\
    &\lesssim\int_{\sqrt{t-1}}^{\sqrt{t-\frac{1}{2}}}\frac{\log\left(2+\frac{1}{u}\right)}{(1-t+u^2)^{1-a}}du\mathbbm{1}_{|x|<2}+\frac{\textup{LN}^*}{|x|^{n-1}}\mathbbm{1}_{|x|>2}\\
    &\lesssim\int_{\sqrt{t-1}}^{\sqrt{t-\frac{1}{2}}}\frac{(1+\frac{1}{u})^{\epsilon}}{(1-t+u^2)^{1-a}}du\mathbbm{1}_{|x|<2}+\frac{\textup{LN}^*}{|x|^{n-1}}\mathbbm{1}_{|x|>2}\\
    &\lesssim\int_{\sqrt{t-1}}^{\sqrt{t-\frac{1}{2}}}\frac{1}{u^{\epsilon}(1-t+u^2)^{1-a}}du\mathbbm{1}_{|x|<2}+\frac{\textup{LN}^*}{|x|^{n-1}}\mathbbm{1}_{|x|>2}\\
    &\lesssim\frac{1}{(t-1)^{\frac{\epsilon}{2}}}\int_{0}^{\frac
    {1}{2}}\frac{dv}{v^{1-a}(t-1+v)^{\frac{1}{2}}}\mathbbm{1}_{|x|<2}+\frac{\textup{LN}^*}{|x|^{n-1}}\mathbbm{1}_{|x|>2}\\
    &\lesssim\frac{\textup{LN}^*}{(t-1)^{\frac{\epsilon}{2}}}\mathbbm{1}_{|x|<2}+\frac{\textup{LN}^*}{|x|^{n-1}}\mathbbm{1}_{|x|>2}.
\end{align*}

Thus we conclude that
\begin{equation}\label{pressurei23}
    |I_2|\lesssim \frac{\left(t-\frac{1}{4}\right)^{\frac{1-\epsilon}{2}}}{\left<x\right>^{n-1}}+\mathbbm{1}_{|x|<2}\frac{\textup{LN}^*}{(t-1)^{\frac{\epsilon}{2}}}+\mathbbm{1}_{|x|>2}\frac{\textup{LN}^*}{|x|^{n-1}}.
\end{equation}

Hence \eqref{pressurei21}, \eqref{pressurei22} and \eqref{pressurei23} give
\begin{align*}
    |I_2|&\lesssim \mathbbm{1}_{|x|<2}\left(\mathbbm{1}_{t\leq\frac{1}{2}}\left(t-\frac{1}{4}\right)^{\frac{1-\epsilon}{2}}+\mathbbm{1}_{\frac{1}{2}\leq t<1}\left(1+\frac{1}{(1-t)^{\frac{1+\epsilon}{2}-a}}\right)+\mathbbm{1}_{t> 1}\frac{\textup{LN}^*}{(t-1)^{\frac{\epsilon}{2}}}\right)\\
    &+\mathbbm{1}_{|x|>2}\left(\mathbbm{1}_{t\leq \frac{1}{2}}\frac{1}{|x|^{n-1}}+\left(\mathbbm{1}_{\frac{1}{2}\leq t<1}+\mathbbm{1}_{t>1}\right)\left(\frac{1}{|x|^{n-1}}+\frac{\textup{LN}^*}{|x|^{n-1}}\right)\right).
 \end{align*}
For the estimates of the higher spatial derivatives, the similar argument as above gives the result,
and this proves the pressure estimate.
\qedsymbol


\section{Proof of Theorem \ref{thm4}}\label{sec6}
From the proof of the previous theorem, we find that
\begin{align*}
    \left|\partial_l\partial_j w_i(x,t)-\sigma K_{28}\right|\lesssim \frac{CN_3}{\left<x\right>^{n}} 
\end{align*}
for \(x\in\mathbb{R}_{+}^{n}\) and \(t\in[0, 2]\) and thus assuming \(\sigma=1\), we find that
\begin{equation}\label{w_iupper}
    \left|\partial_n^2 w_i(x,t)\right|\geq \left|K_{28}\right|-\left|\partial_n^2 w_i(x,t)-K_{28}\right|\gtrsim |K_{28}|-\frac{CN_3}{\left<x\right>^n}.
\end{equation}
We now show that \(\left|K_{28}(x,1)\right|\) is unbounded as \(x_n\rightarrow 0\). Note that using \(4B(x,s)=e^{-\frac{x_n^2}{4s}}\frac{C_n}{s^{\frac{n}{2}}}K(x',s)\) where \(C_n=4(4\pi)^{-\frac{n}{2}}\frac{1}{n(n-2)|B_1|}\),

\begin{align*}
    K_{28}(x,1)&=4\int_{0}^{1}\int_{|y'|\leq 1}\partial_i\partial_kB(x-y',s)\partial_s g_k(y',1-s)dy'ds\\
    &=4\int_{0}^{1}\int_{|y'|\leq 1}B(x-y',s)\partial_i\partial_k\partial_s g_k(y',1-s)dy'ds \\
    &=4\int_{0}^{1}\int_{|y'|\leq 1}B(x-y',s)\partial_i\partial_kg_k^{\mathcal{S}}(y')\partial_s g_k^{\mathcal{T}}(1-s)dy'ds\\
    &=\int_{0}^{1}\int_{|y'|\leq 1}e^{-\frac{x_n^2}{4s}}\frac{C_n}{s^{\frac{n}{2}}}K(x'-y',s)\partial_i\partial_k g_k^{\mathcal{S}}(y')\partial_s g_k^{\mathcal{T}}(1-s)dy'ds\\
    &=-a\int_{0}^{\frac{1}{2}}\int_{|y'|\leq 1}e^{-\frac{x_n^2}{4s}}\frac{C_n}{s^{\frac{n}{2}}}K(x'-y',s)\partial_i\partial_k g_k^{\mathcal{S}}(y')s^{a-1}dy'ds\\
    &+\int_{\frac{1}{2}}^{\frac{3}{4}}\int_{|y'|\leq 1}e^{-\frac{x_n^2}{4s}}\frac{C_n}{s^{\frac{n}{2}}}K(x'-y',s)\partial_i\partial_k g_k^{\mathcal{S}}(y')\partial_s g_k^{\mathcal{T}}(1-s)dy'ds\\
    &=K_{281}(x,1)+K_{282}(x,1).
\end{align*}\\

1) First assume that \(i\neq k\). 
Note that \(K_{28}(x, 1)\) is odd in \(x_i\) and \(x_k\) and thus we may assume that \(x_i<0\) and \(x_k>0\). The other cases then follow with a sign change.
We also note that since \(\partial_s g_k^{\mathcal{T}}(1-s)\) is bounded on \([\frac{1}{2}, \frac{3}{4}]\) and using Lemma \ref{Kestimate},
\begin{align*}
    \left|K_{282}(x,1)\right|&\lesssim \frac{1}{|x'|^{n-2}}.
\end{align*}

We now estimate \(K_{281}(x,1)\). We have, with \(d:=|x'-y'|\), that
\begin{align*}
    K_{281}(x,1)&=-a\int_{0}^{\frac{1}{2}}\int_{|y'|\leq 1}e^{-\frac{x_n^2}{4s}}\frac{C_n}{s^{\frac{n}{2}-a+1}}K(x'-y',s)\left((\partial_i\partial_k g_k^{\mathcal{S}})_{+}(y')-(\partial_i\partial_k g_k^{\mathcal{S}})_{-}(y')\right)dy'ds\\
    &\geq -a\int_{0}^{\frac{1}{2}}\int_{|y'|\leq 1}e^{-\frac{x_n^2}{4s}}\frac{C_n}{s^{\frac{n}{2}-a+1}}\left(\left(\frac{m}{(m-1)d}\right)^{n-2}s^{\frac{n-1}{2}}(4\pi)^{\frac{n-1}{2}}+\frac{C}{d^{n-2}}s^{\frac{n-1}{2}}e^{-\frac{d^2}{8m^2s}}\right)(\partial_i\partial_k g_k^{\mathcal{S}})_{+}(y')dy'ds\\
    &+a\int_{0}^{\frac{1}{2}}\int_{|y'|\leq 1}e^{-\frac{x_n^2}{4s}}\frac{C_n}{s^{\frac{n}{2}-a+1}}\left(\frac{m}{(m+1)d}\right)^{n-2}s^{\frac{n-1}{2}}(4\pi)^{\frac{n-1}{2}}\left(1-2^{\frac{n-1}{4}}e^{-\frac{d^2}{8m^2s}}\right)(\partial_i\partial_k g_k^{\mathcal{S}})_{-}(y')dy'ds\\
    &\geq a\int_{0}^{\frac{1}{2}}\int_{|y'|\leq 1}e^{-\frac{x_n^2}{4s}}\frac{C_n}{s^{-a+\frac{3}{2}}}(4\pi)^{\frac{n-1}{2}}\left\{\left(\frac{m}{(m+1)d}\right)^{n-2}(\partial_i\partial_k g_k^{\mathcal{S}})_{-}(y')-\left(\frac{m}{(m-1)d}\right)^{n-2}(\partial_i\partial_k g_k^{\mathcal{S}})_{+}(y')\right\}dy'ds\\
    &-a\int_{0}^{\frac{1}{2}}\int_{|y'|\leq 1}e^{-\frac{x_n^2}{4s}}\frac{C}{s^{-a+\frac{3}{2}}}\frac{1}{d^{n-2}}e^{-\frac{d^2}{8m^2s}}\left|\partial_i\partial_kg_k^{\mathcal{S}}(y')\right|dy'ds.
\end{align*}
Let \(J\) be the second integral of the RHS above, then
\begin{align*}
    \frac{K_{281}(x,1)+J}{aC_n (4\pi)^{\frac{n-1}{2}}}
    &\geq \int_{0}^{\frac{1}{2}}e^{-\frac{x_n^2}{4s}}s^{a-\frac{3}{2}}ds\\
    &\times\int_{|y'|\leq 1}\left\{\left(\frac{m}{(m+1)d}\right)^{n-2}(\partial_i\partial_k g_k^{\mathcal{S}})_{-}(y')-\left(\frac{m}{(m-1)d}\right)^{n-2}(\partial_i\partial_k g_k^{\mathcal{S}})_{+}(y')\right\}dy'.
\end{align*}

Here since \(x_n\leq 1\), if \(a<\frac{1}{2}\),
\begin{align*}
    \int_{0}^{\frac{1}{2}}e^{-\frac{x_n^2}{4s}}s^{a-\frac{3}{2}}ds
    &=\left(\frac{x_n^2}{4}\right)^{a-\frac{1}{2}}\int_{\frac{x_n^2}{2}}^{\infty}e^{-u}u^{-a-\frac{1}{2}}du\\
    &\geq\left(\frac{x_n^2}{4}\right)^{a-\frac{1}{2}}\int_{\frac{1}{2}}^{\infty}e^{-u}u^{-a-\frac{1}{2}}du\sim \left(\frac{x_n^2}{4}\right)^{a-\frac{1}{2}},
\end{align*}
and if \(a=\frac{1}{2}\),
\begin{align*}
    \int_{0}^{\frac{1}{2}}e^{-\frac{x_n^2}{4s}}s^{a-\frac{3}{2}}ds&=\int_{\frac{x_n^2}{2}}^{\infty}e^{-u}u^{-1}du\gtrsim \log \frac{2}{x_n^2}\geq \log\frac{2}{x_n}.
\end{align*}\\
We now estimate the space integral which we denote as \(I\). From our construction of the boundary data, we find that \((\partial_i\partial_k g_k^{\mathcal{S}})_{-}(y')=0\) on \(\left\{y_i y_k\geq 0\right\}\) and \((\partial_i\partial_k g_k^{\mathcal{S}})_{+}(y')=0\) on \(\left\{y_i y_k\leq 0\right\}\) and thus
\begin{align*}
    I&=\int_{\substack{|y'|\leq 1 \\ y_iy_k<0}}\left(\frac{m}{(m+1)d}\right)^{n-2}(\partial_i\partial_k g_k^{\mathcal{S}})_{-}(y')dy'
    -\int_{\substack{|y'|\leq 1 \\ y_iy_k>0}}\left(\frac{m}{(m-1)d}\right)^{n-2}(\partial_i\partial_k g_k^{\mathcal{S}})_{+}(y')dy'\\
    &=\int_{\substack{|y'|\leq 1 \\ y_i>0 ,~ y_k<0}}\left(\frac{m}{(m+1)d}\right)^{n-2}(\partial_i\partial_k g_k^{\mathcal{S}})_{-}(y')dy'
    +\int_{\substack{|y'|\leq 1 \\ y_i<0,~ y_k>0}}\left(\frac{m}{(m+1)d}\right)^{n-2}(\partial_i\partial_k g_k^{\mathcal{S}})_{-}(y')dy'
    \\&-\int_{\substack{|y'|\leq 1 \\ y_i>0,~ y_k>0}}\left(\frac{m}{(m-1)d}\right)^{n-2}(\partial_i\partial_k g_k^{\mathcal{S}})_{+}(y')dy'
    -\int_{\substack{|y'|\leq 1 \\ y_i<0,~ y_k<0}}\left(\frac{m}{(m-1)d}\right)^{n-2}(\partial_i\partial_k g_k^{\mathcal{S}})_{+}(y')dy'\\
    &=:I_1+I_2-I_3-I_4.
\end{align*}

Now we denote \(y_{(j)}^{*}:=(y_1,\cdots, -y_j,\cdots, y_{n-1})\) and \(d_{j}:=|x'-y_{(j)}^{*}|\) for \(j=1,2,\cdots, n-1\) and \(y_{(i,k)}^{*}:=(y_1,\cdots, y_{i-1}, -y_i, y_{i+1}\cdots,y_{k-1}, -y_k, y_{k+1},\cdots, y_{n-1})\) with \(d_{ik}:=|x'-y_{(i,k)}^{*}|\). Then we find that
\begin{align*}
    I_1&=\int_{\substack{|y'|\leq 1 \\ y_i>0 ,~ y_k<0}}\left(\frac{m}{(m+1)d}\right)^{n-2}(\partial_i\partial_k g_k^{\mathcal{S}})_{-}(y')dy'\\
    &=\int_{\substack{|y'|\leq 1 \\ y_i>0 ,~ y_k>0}}\left(\frac{m}{(m+1)d_k}\right)^{n-2}(\partial_i\partial_k g_k^{\mathcal{S}})_{-}(y_{(k)}^{*})dy'\\
    &=\int_{\substack{|y'|\leq 1 \\ y_i>0 ,~ y_k>0}}\left(\frac{m}{(m+1)d_k}\right)^{n-2}(\partial_i\partial_k g_k^{\mathcal{S}})_{+}(y')dy',
\end{align*}
where the last equality follows since \(\partial_i\partial_k g_k^{\mathcal{S}}(y_{(k)}^{*})=-\partial_i\partial_k g_k^{\mathcal{S}}(y')\). Similarly we find that
\begin{align*}
    I_{2}&=\int_{\substack{|y'|\leq 1 \\ y_i>0,~ y_k>0}}\left(\frac{m}{(m+1)d_{i}}\right)^{n-2}(\partial_i\partial_k g_k^{\mathcal{S}})_{+}(y')dy'\\
    I_{3}&=\int_{\substack{|y'|\leq 1 \\ y_i>0,~ y_k>0}}\left(\frac{m}{(m-1)d}\right)^{n-2}(\partial_i\partial_k g_k^{\mathcal{S}})_{+}(y')dy'\\
    I_{4}&=\int_{\substack{|y'|\leq 1 \\ y_i>0,~ y_k>0}}\left(\frac{m}{(m-1)d_{ik}}\right)^{n-2}(\partial_i\partial_k g_k^{\mathcal{S}})_{+}(y')dy'.
\end{align*}
Thus we find that
\begin{align*}
    I=\int_{\substack{|y'|\leq 1 \\ y_i>0,~ y_k>0}}\left\{\left(\frac{m}{(m+1)d_{k}}\right)^{n-2}+\left(\frac{m}{(m+1)d_{i}}\right)^{n-2}-\left(\frac{m}{(m-1)d}\right)^{n-2}-\left(\frac{m}{(m-1)d_{ik}}\right)^{n-2} \right\}\left|\partial_i \partial_k g_k^{\mathcal{S}}(y')\right|dy'.
\end{align*}
Now consider the integral
\begin{equation*}
    I':=\int_{\substack{|y'|\leq 1 \\ y_i>0,~ y_k>0}}\left(\frac{1}{d_k^{n-2}}+\frac{1}{d_i^{n-2}}-\frac{1}{d^{n-2}}-\frac{1}{d_{ik}^{n-2}}\right)\left|\partial_i \partial_k g_k^{\mathcal{S}}(y')\right|dy'.
\end{equation*}
Then we find that
\begin{align*}
    |I-I'|&\leq \int_{\substack{|y'|\leq 1 \\ y_i>0,~ y_k>0}}\left[\left(1-\left(\frac{m}{m+1}\right)^{n-2}\right)\frac{1}{d_k^{n-2}}+\left(1-\left(\frac{m}{m+1}\right)^{n-2}\right)\frac{1}{d_i^{n-2}}\right.\\
    &\left.+\left(\left(\frac{m}{m-1}\right)^{n-2}-1\right)\frac{1}{d^{n-2}}+\left(\left(\frac{m}{m-1}\right)^{n-2}-1\right)\frac{1}{d_{ik}^{n-2}}\right]\left|\partial_i \partial_k g_k^{\mathcal{S}}(y')\right|dy'.
\end{align*}

Here using the inequalities
\begin{align*}
    1-\left(\frac{m}{m+1}\right)^{n-2}\leq \frac{n-2}{m+1},~~~~~~\left(\frac{m}{m-1}\right)^{n-2}-1\leq \frac{2^{n-2}}{m-1},
\end{align*}
and the fact that \(\min\left\{d, d_i, d_k, d_{ik}\right\}=d_i\geq \frac{2}{3}|x'|\) since \(x_i<0, x_k>0\), we find that
\begin{align*}
    \left|I-I'\right|\leq C_n\left(\frac{1}{m+1}+\frac{1}{m-1}\right)\frac{1}{|x'|^{n-2}}\int_{\substack{|y'|\leq 1 \\ y_i>0,~ y_k>0}}\left|\partial_i \partial_k g_k^{\mathcal{S}}(y')\right|dy',
\end{align*}
where \(C_n\) is a constant depending only on \(n\), which will vary line by line.

Now for any given \(\epsilon>0\), take \(m\geq \frac{C_m|x'|^4}{\epsilon}\) where \(C_m\) is a sufficiently large positive number independent of \(x'\) and \(\epsilon\), we then note that
\begin{align*}
    \frac{1}{m+1}+\frac{1}{m-1}\leq \frac{1}{m}+\frac{9C_m}{9C_m-1}\frac{1}{m}\leq \frac{\epsilon C_n}{C_m}\frac{1}{|x'|^4}
\end{align*}
(we will only need \(C_m\geq \frac{2}{9}\) to obtain the above estimate). Hence we obtain that
\begin{align*}
    |I-I'|\leq \frac{\epsilon C_n}{C_m}\frac{1}{|x'|^{n+2}}\int_{\substack{|y'|\leq 1 \\ y_i>0,~ y_k>0}}\left|\partial_i \partial_k g_k^{\mathcal{S}}(y')\right|dy'\leq \frac{\epsilon C_1}{C_m}\frac{1}{|x'|^{n+2}},
\end{align*}
where the above \(C_1\) depends only on \(n\) and \(g_k^\mathcal{S}\).

We now estimate \(I'\). For this we need a lemma which generalizes Lemma A.3 of \cite{KangTsai22FE}, for which we skip its proof.
\begin{lem}
    Let \(n\geq 3\), \(H(x,y)=(x^2+y^2)^{-\frac{n-2}{2}}\), \(0\leq t<u\) and \(0\leq a<b\). Then
    \begin{align*}
        H(t,a)-H(t,b)-H(u,a)+H(u,b)\geq \frac{n(n-2)}{4}(u^2-t^2)(b^2-a^2)H(u,b)^{\frac{n+2}{n-2}}.
    \end{align*}
\end{lem}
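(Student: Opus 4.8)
The plan is to recognize the left-hand side as a second-order mixed difference of $H$ and rewrite it, via the fundamental theorem of calculus, as a double integral of $\partial_x\partial_y H$ over the rectangle $[t,u]\times[a,b]$; the stated bound then drops out from a crude pointwise lower bound on the integrand followed by separation of variables.

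First I would note that $H(u,b)^{\frac{n+2}{n-2}}=(u^2+b^2)^{-\frac{n+2}{2}}$, and dispose of the degenerate case $t=a=0$: there the right-hand side is finite since $u>0$ and $b>0$, whereas $H(t,a)=H(0,0)=+\infty$ and the other three terms on the left are finite, so the inequality holds trivially. Hence I may assume $(t,a)\neq(0,0)$, in which case $x^2+y^2>0$ on the closed rectangle $[t,u]\times[a,b]$ and $H$ is smooth there. A direct computation gives
\[
\partial_y H(x,y)=-(n-2)\,y\,(x^2+y^2)^{-\frac n2},\qquad \partial_x\partial_y H(x,y)=n(n-2)\,\frac{xy}{(x^2+y^2)^{\frac{n+2}{2}}}\ge 0
\]
on the rectangle, and the fundamental theorem of calculus yields
\[
H(t,a)-H(t,b)-H(u,a)+H(u,b)=\int_t^u\int_a^b \partial_x\partial_y H(x,y)\,dy\,dx=n(n-2)\int_t^u\int_a^b \frac{xy}{(x^2+y^2)^{\frac{n+2}{2}}}\,dy\,dx.
\]

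To finish, on $[t,u]\times[a,b]$ one has $x^2+y^2\le u^2+b^2$, hence $(x^2+y^2)^{-\frac{n+2}{2}}\ge (u^2+b^2)^{-\frac{n+2}{2}}$; pulling this constant out of the integral and using $\int_t^u x\,dx=\frac12(u^2-t^2)$ and $\int_a^b y\,dy=\frac12(b^2-a^2)$ gives
\[
H(t,a)-H(t,b)-H(u,a)+H(u,b)\ge n(n-2)\,(u^2+b^2)^{-\frac{n+2}{2}}\cdot\frac{u^2-t^2}{2}\cdot\frac{b^2-a^2}{2},
\]
which is exactly the claimed inequality. The argument is entirely routine; the only point requiring a moment's care is the possible singularity of $H$ at the origin, which is handled by the trivial case $t=a=0$, so I do not anticipate any real obstacle.
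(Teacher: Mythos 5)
Your proof is correct. Note that the paper itself does not supply a proof of this lemma (it states that the lemma generalizes Lemma A.3 of the cited reference and explicitly skips the argument), so there is nothing to compare against; your derivation — writing the second mixed difference as $\int_t^u\int_a^b \partial_x\partial_y H\,dy\,dx$, computing $\partial_x\partial_y H = n(n-2)xy(x^2+y^2)^{-\frac{n+2}{2}}\ge 0$, bounding the kernel below by its value at the corner $(u,b)$, and integrating $x$ and $y$ separately — is the natural one and fills the gap completely, including the careful treatment of the degenerate corner $t=a=0$ where $H$ is singular.
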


Let us define the variable \(x'':=(x_1,\cdots, x_{k-1}, x_{k+1},\cdots, x_{n-1})\in\mathbb{R}^{n-2}\). Then letting \(a=|x_i+y_i|\), \(b=|x_i-y_i|\), \(\displaystyle u=|x''-y''|+|x_k+y_k|\), \(\displaystyle t=|x''-y''|+|x_k-y_k|\), and \(R:=|x''-y''|\), we find that the above lemma implies
\begin{align*}
    \frac{1}{d_k^{n-2}}+\frac{1}{d_i^{n-2}}-\frac{1}{d^{n-2}}-\frac{1}{d_{ik}^{n-2}}&\geq \frac{n(n-2)}{4}(4|x_i|y_i)(4x_k y_k+4Rx_k)\frac{1}{d_k^{n+2}}\\
    &\geq 4n(n-2)|x_i|x_k y_i y_k \frac{1}{d_k^{n+2}}\geq \frac{C\alpha\beta}{|x'|^{n+2}},
\end{align*}
where in the last inequality we used that \(d_k=|x'-y_{(k)}^*|\geq |x'|-1\geq\frac{2}{3}|x'|\). We then conclude that
\begin{align*}
    I'\geq \frac{C_2\alpha\beta}{|x'|^{n+2}}.
\end{align*}
Thus, it follows that
\begin{align*}
    I\geq I'-|I-I'|\geq \frac{C_2\alpha\beta}{|x'|^{n+2}}-\frac{\epsilon C_1}{C_m}\frac{1}{|x'|^{n+2}},
\end{align*}
and by choosing \(\epsilon>0\) such that \(C_2\alpha\beta-\epsilon\frac{C_1}{C_m}>C\alpha\beta\) for some \(C>0\), we find that
\begin{align*}
    I\geq \frac{C\alpha\beta}{|x'|^{n+2}}.
\end{align*}
Thus we conclude that
\begin{align*}
    \frac{K_{281}(x,1)+J}{aC_n(4\pi)^\frac{n-1}{2}}\geq \left(\mathbbm{1}_{a<\frac{1}{2}}\left(\frac{x_n^2}{4}\right)^{a-\frac{1}{2}}+\mathbbm{1}_{a=\frac{1}{2}}\log \frac{2}{x_n}\right)\frac{C\alpha\beta}{|x'|^{n+2}}.
\end{align*}
We now estimate \(J\):
\begin{align*}
    \frac{J}{aC_n(4\pi)^{\frac{n-1}{2}}}&=\int_{0}^{\frac{1}{2}}\int_{|y'|\leq 1} e^{-\frac{x_n^2}{4s}}\frac{1}{s^{-a+\frac{3}{2}}}\frac{1}{d^{n-2}}e^{-\frac{d^2}{8m^2 s}}\left|\partial_i\partial_k g_k^{\mathcal{S}}(y')\right|dy'ds\\
    &\leq \int_{0}^{\frac{1}{2}}\int_{|y'|\leq 1} \frac{1}{s^{-a+\frac{3}{2}}}\frac{1}{d^{n-2}}e^{-\frac{d^2}{8m^2 s}}\left|\partial_i\partial_k g_k^{\mathcal{S}}(y')\right|dy'ds\\
    &= \int_{|y'|\leq 1}\frac{1}{d^{n-2}}\left|\partial_i\partial_k g_k^{\mathcal{S}}(y')\right|\int_{0}^{\frac{1}{2}} \frac{1}{s^{-a+\frac{3}{2}}}e^{-\frac{d^2}{8m^2 s}}dsdy'.
\end{align*}
Similarly as before, we have that for \(m\geq d\),
\begin{equation*}
    \int_{0}^{\frac{1}{2}} \frac{1}{s^{-a+\frac{3}{2}}}e^{-\frac{d^2}{8m^2 s}}ds=\mathbbm{1}_{a<\frac{1}{2}}\left(\frac{d^2}{8m^2}\right)^{a-\frac{1}{2}}+\mathbbm{1}_{a=\frac{1}{2}}\log \frac{4m^2}{d^2}.
\end{equation*}
Thus, by choosing \(m=2|x'|>d\), we get 
\begin{align*}
    J&\lesssim \int_{|y'|\leq 1}\frac{1}{d^{n-2}}  \left[\mathbbm{1}_{a<\frac{1}{2}}\left(\frac{d^2}{8m^2}\right)^{a-\frac{1}{2}}+\mathbbm{1}_{a=\frac{1}{2}}\log \frac{4m^2}{d^2}.\right]\left|\partial_i\partial_k g_k^{\mathcal{S}}(y')\right|dy'\\
    &\lesssim \mathbbm{1}_{a<\frac{1}{2}}\frac{m^{1-2a}}{|x'|^{n-1-2a}}+\mathbbm{1}_{a=\frac{1}{2}}\frac{1}{|x'|^{n-2}}\log\left(\frac{3m}{|x'|}\right)\lesssim \frac{1}{|x'|^{n-2}}.
\end{align*}
Thus we obtain that
\begin{align*}
    K_{281}(x,1)\gtrsim \left(\mathbbm{1}_{a<\frac{1}{2}}\frac{1}{x_n^{1-2a}}+\mathbbm{1}_{a=\frac{1}{2}}\log \frac{2}{x_n^2}\right)\frac{C\alpha\beta}{|x'|^{n+2}}-\frac{C}{|x'|^{n-2}}.
\end{align*}
Then finally we see that 
\begin{align*}
    |K_{28}(x,1)|&=|K_{281}(x,1)+K_{282}(x,1)|\\
    &\geq |K_{281}(x,1)|-|K_{282}(x,1)|\\
    &\gtrsim \left(\mathbbm{1}_{a<\frac{1}{2}}\frac{1}{x_n^{1-2a}}+\mathbbm{1}_{a=\frac{1}{2}}\log \frac{2}{x_n}\right)\frac{C\alpha\beta}{|x'|^{n+2}}-\frac{C}{|x'|^{n-2}}.
\end{align*}
Thus for \(1\leq i \leq n-1\), (\ref{w_iupper}) gives
\begin{align*}
    |\partial_n^2 w_i(x,1)|&\gtrsim\left(\mathbbm{1}_{a<\frac{1}{2}}\frac{1}{x_n^{1-2a}}+\mathbbm{1}_{a=\frac{1}{2}}\log \frac{2}{x_n}\right)\frac{C\alpha\beta}{|x'|^{n+2}}-\frac{C}{|x'|^{n-2}}-\frac{CN_3}{\left<x\right>^{n}}\\
    &\gtrsim\left(\mathbbm{1}_{a<\frac{1}{2}}\frac{1}{x_n^{1-2a}}+\mathbbm{1}_{a=\frac{1}{2}}\log \frac{2}{x_n}\right)\frac{C\alpha\beta}{|x'|^{n+2}}-\frac{1}{|x'|^{n-2}}.
\end{align*}\\
2) We now consider the case \(i=k\). In this case the boundary data \(g\) are symmetric in \(x_k\) and thus we will assume that \(x_k>0\). Also in this case the integral \(K_{281}\) is not necessarily positive and thus we obtain both a lower bound and upper bound for this integral. First for a lower bound (assuming \(K_{281}\) to be positive), as done in the case \(i\neq k\), \(K_{281}\) can be written as follows:
\begin{align*}
    K_{281}(x,1)&\geq a\int_{0}^{\frac{1}{2}}\int_{|y'|\leq 1}e^{-\frac{x_n^2}{4s}}\frac{C_n}{s^{-a+\frac{3}{2}}}(4\pi)^{\frac{n-1}{2}}\left\{\left(\frac{m}{(m+1)d}\right)^{n-2}(\partial_k^2 g_k^{\mathcal{S}})_{-}(y')-\left(\frac{m}{(m-1)d}\right)^{n-2}(\partial_k^2 g_k^{\mathcal{S}})_{+}(y')\right\}dy'ds\\
    &-J,
\end{align*}
where \(J\) is now given by
\begin{align}\label{J}
    J:=a\int_{0}^{\frac{1}{2}}\int_{|y'|\leq 1}e^{-\frac{x_n^2}{4s}}\frac{C}{s^{-a+\frac{3}{2}}}\frac{1}{d^{n-2}}e^{-\frac{d^2}{8m^2s}}\left|\partial_k^2 g_k^{\mathcal{S}}(y')\right|dy'ds.
\end{align}
Thus we obtain that
\begin{align*}
    \frac{K_{281}(x,1)+J}{aC_n (4\pi)^{\frac{n-1}{2}}}
    &\geq \int_{0}^{\frac{1}{2}}e^{-\frac{x_n^2}{4s}}s^{a-\frac{3}{2}}ds\\
    &\times\int_{|y'|\leq 1}\left\{\left(\frac{m}{(m+1)d}\right)^{n-2}(\partial_k^2 g_k^{\mathcal{S}})_{-}(y')-\left(\frac{m}{(m-1)d}\right)^{n-2}(\partial_k^2 g_k^{\mathcal{S}})_{+}(y')\right\}dy'.
\end{align*}
The time integral in the RHS above is already estimated in the \(i\neq k\) case and we denote the space integral of the RHS above as \(I\). Then using the symmetry of \(\partial_k^2 g_k^{\mathcal{S}}\) with respect to \(y_k=0\) and \(y_k=\pm q\) (see Appendix 8.3), \(I\) is given as
\begin{align*}
    I&=\int_{\substack{|y'|\leq 1 \\ p\leq|y_k|\leq q}}\left(\frac{m}{(m+1)d}\right)^{n-2}\left(\partial_k^2g_k^{\mathcal{S}}\right)_{-}(y')dy'-\int_{\substack{|y'|\leq 1 \\ q\leq|y_k|\leq r}}\left(\frac{m}{(m-1)d}\right)^{n-2}\left(\partial_k^2 g_k^{\mathcal{S}}\right)_{+}(y')dy'\\
    &=\int_{\substack{|y'|\leq 1 \\ p\leq y_k\leq q}}\left(\left(\frac{m}{(m+1)d}\right)^{n-2}+\left(\frac{m}{(m+1)d_k}\right)^{n-2}\right)\left(\partial_k^2g_k^{\mathcal{S}}\right)_{-}(y')dy'\\
    &-\int_{\substack{|y'|\leq 1 \\ p\leq y_k\leq q}}\left(\left(\frac{m}{(m-1)d_q}\right)^{n-2}+\left(\frac{m}{(m-1)d_{q k}}\right)^{n-2}\right)\left(\partial_k^2g_k^{\mathcal{S}}\right)_{+}(2q\mathbbm{e}_k-y')dy'\\
    &=\int_{\substack{|y'|\leq 1 \\ p\leq y_k\leq q}}\left(\left(\frac{m}{(m+1)d}\right)^{n-2}+\left(\frac{m}{(m+1)d_k}\right)^{n-2}-\left(\frac{m}{(m-1)d_q}\right)^{n-2}-\left(\frac{m}{(m-1)d_{q k}}\right)^{n-2}\right)\left|\partial_k^2g_k^{\mathcal{S}}(y')\right|dy',
\end{align*}
where the last equality follows since \(\left(\partial_k^2g_k^{\mathcal{S}}\right)_{+}(2q\mathbbm{e}_k -y')=\left(\partial_k^2g_k^{\mathcal{S}}\right)_{-}(y')\) for \(p\leq y_k\leq q\), and
\begin{align*}
    d_q:=|x'-y_{k}^{(*)}-2q \mathbbm{e}_k|,\qquad d_{q k}:=|x'-y'+2q\mathbbm{e}_k|.
\end{align*}
Now consider the integral
\begin{align*}
    I':=\int_{\substack{|y'|\leq 1 \\ p\leq y_k\leq q}}\left(\frac{1}{d^{n-2}}+\frac{1}{d_k^{n-2}}-\frac{1}{d_q^{n-2}}-\frac{1}{d_{q k}^{n-2}}\right)\left|\partial_k^2g_k^{\mathcal{S}}(y')\right|dy'.
\end{align*}
Then we find that
\begin{align*}
    |I-I'|&\leq \int_{\substack{|y'|\leq 1 \\p\leq y_k \leq q}}\left[\left(1-\left(\frac{m}{m+1}\right)^{n-2}\right)\frac{1}{d^{n-2}}+\left(1-\left(\frac{m}{m+1}\right)^{n-2}\right)\frac{1}{d_k^{n-2}}\right.\\
    &\left.+\left(\left(\frac{m}{m-1}\right)^{n-2}-1\right)\frac{1}{d_q^{n-2}}+\left(\left(\frac{m}{m-1}\right)^{n-2}-1\right)\frac{1}{d_{q k}^{n-2}}\right]\left|\partial_k^2 g_k^{\mathcal{S}}(y')\right|dy'.
\end{align*}
Here using the inequalities
\begin{align*}
    1-\left(\frac{m}{m+1}\right)^{n-2}\leq \frac{n-2}{m+1},~~~~~~~~~~\left(\frac{m}{m-1}\right)^{n-2}-1\leq \frac{2^{n-2}}{m-1},
\end{align*}
and the fact that \(\min\left\{d, d_k, d_q, d_{q k}\right\}=d_\alpha\geq \frac{2}{3}\left(1-\frac{13\alpha}{20\sqrt{n-1}}\right)|x'|\) since \(|x'|\geq 3\), we find that
\begin{align*}
    |I-I'|\leq C_n \left(\frac{1}{m+1}+\frac{1}{m-1}\right)\frac{1}{|x'|^{n-2}}\int_{\substack{|y'|\leq 1 \\0\leq y_k \leq \alpha}}\left|\partial_k^2 g_k^{\mathcal{S}}(y')\right|dy'.
\end{align*}
Now for any given \(\epsilon>0\), take \(m\geq \frac{C_m|x'|^2}{\epsilon}\) where \(C_m\) is a sufficiently large number independent of \(x'\) and \(\epsilon\), we note that 
\begin{align*}
    \frac{1}{m+1}+\frac{1}{m-1}\leq \frac{1}{m}+\frac{9C_m}{9C_m-1}\frac{1}{m}\leq \frac{\epsilon C_n}{C_m}\frac{1}{|x'|^2}
\end{align*}
(we will only need \(C_m\geq \frac{2}{9}\) to obtain the above estimate).
\\
Hence we obtain that
\begin{align*}
    |I-I'|\leq \frac{\epsilon C_n}{C_m}\frac{1}{|x'|^{n}}\int_{\substack{|y'|\leq 1 \\0\leq y_k \leq \alpha}}\left|\partial_k^2 g_k^{\mathcal{S}}(y')\right|dy'\leq \frac{\epsilon C'}{|x'|^{n}}.
\end{align*}

We now estimate \(I'\). 

Assume that \(x_k<p\).
Note the following chain of inequalities
\begin{equation*}
    0<y_k-x_k\leq\min\left\{x_k+y_k,~2q-x_k-y_k\right\}\leq\max\left\{x_k+y_k, ~2q-x_k-y_k\right\}\leq x_k-y_k+2q.
\end{equation*}
Denote \(t:=y_k-x_k, a:=2q-2y_k, b:=2x_k\). Then from the above chain of inequalities we have that \(t>0,~ a>2\delta, ~b\geq 0, ~t+a=2q-y_k-x_k<q<1<|x'|\). 

Now define the function \(\displaystyle f(x)=\frac{1}{(R^2+x^2)^{\frac{n-2}{2}}}\) (\(x>0\)), where \(R:=|x''-y''|\leq |x'|+1\leq \frac{4}{3}|x'|\). Then since \(\displaystyle f'(x)=-\frac{(n-2)x}{(R^2+x^2)^\frac{n}{2}}\), it follows that
\begin{align*}
    \frac{1}{d^{n-2}}+\frac{1}{d_k^{n-2}}-\frac{1}{d_q^{n-2}}-\frac{1}{d_{q k}^{n-2}}&=f(t)+f(t+b)-f(t+a)-f(t+a+b)\\
    &=(n-2)\left(\int_t^{t+a}\frac{1}{(R^2+s^2)^{\frac{n}{2}}}ds+\int_{t+b}^{t+a+b}\frac{1}{(R^2+s^2)^{\frac{n}{2}}}ds\right)\\
    &\geq \frac{(n-2)a}{(R^2+(t+a)^2)^{\frac{n}{2}}}\geq\frac{2\delta(n-2)}{\left(\frac{16}{9}|x'|^2+|x'|^2\right)^\frac{n}{2}}=\frac{C_n}{|x'|^n},
\end{align*}

and thus we see that \(\displaystyle I'\geq\frac{C_n}{|x'|^n}\) and hence by taking \(\epsilon>0\) sufficiently small,
\begin{align*}
    |I|\geq |I'|-|I-I'|\geq \frac{C}{|x'|^n}-\frac{C'\epsilon}{|x'|^{n}}\geq \frac{C}{|x'|^n}.
\end{align*}
We now consider an upper bound for \(K_{281}\) (assuming \(K_{281}\) to be negative),
\begin{align*}
    K_{281}(x,1)
    &=-a\int_{0}^{\frac{1}{2}}\int_{|y'|\leq 1}e^{-\frac{x_n^2}{4s}}\frac{C_n}{s^{\frac{n}{2}-a+1}}K(x'-y',s)\left((\partial_k^2 g_k^{\mathcal{S}})_{+}(y')-(\partial_k^2 g_k^{\mathcal{S}})_{-}(y')\right)dy'ds\\
    &\leq -a\int_{0}^{\frac{1}{2}}\int_{|y'|\leq 1}e^{-\frac{x_n^2}{4s}}\frac{C_n}{s^{\frac{n}{2}-a+1}}\left(\frac{m}{(m+1)d}\right)^{n-2}s^{\frac{n-1}{2}}(4\pi)^{\frac{n-1}{2}}\left(1-2^{\frac{n-1}{4}}e^{-\frac{d^2}{8m^2s}}\right)(\partial_k^2 g_k^{\mathcal{S}})_{+}(y')dy'ds\\
    &+a\int_{0}^{\frac{1}{2}}\int_{|y'|\leq 1}e^{-\frac{x_n^2}{4s}}\frac{C_n}{s^{\frac{n}{2}-a+1}}\left(\left(\frac{m}{(m-1)d}\right)^{n-2}s^{\frac{n-2}{2}}(4\pi)^{\frac{n-1}{2}}+\frac{C}{d^{n-2}}s^{\frac{n-1}{2}}e^{-\frac{d^2}{8m^2s}}\right)(\partial_k^2 g_k^{\mathcal{S}})_{-}(y')dy'ds\\
     &\leq -a\int_0^{\frac{1}{2}} e^{-\frac{x_n^2}{4s}}\frac{C_n}{s^{\frac{3}{2}-a}}(4\pi)^{\frac{n-1}{2}} ds\int_{|y'|\leq 1}
    \left\{\left(\frac{m}{(m+1)d}\right)^{n-2}(\partial_k^2g_k^{\mathcal{S}})_+(y')-\left(\frac{m}{(m-1)d}\right)^{n-2}(\partial_k^2g_k^{\mathcal{S}})_{-}(y')\right\}dy'\\
     &+J,
\end{align*}
where \(J\) is the integral previously defined in \eqref{J}. Let \(I\) be the space integral of RHS above. Then \(I\) is given as follows
\begin{align*}
    I=\int_{\substack{|y'|\leq 1 \\ q\leq y_k\leq r}}\left(\left(\frac{m}{(m+1)d}\right)^{n-2}+\left(\frac{m}{(m+1)d_k}\right)^{n-2}-\left(\frac{m}{(m-1)d_q}\right)^{n-2}-\left(\frac{m}{(m-1)d_{q k}}\right)^{n-2}\right)\left|\partial_k^2g_k^{\mathcal{S}}(y')\right|dy'.
\end{align*}
Now consider the integral
\begin{align*}
    I'=\int_{\substack{|y'|\leq 1 \\ q\leq y_k\leq r}}\left(\frac{1}{d^{n-2}}+\frac{1}{d_k^{n-2}}-\frac{1}{d_q^{n-2}}-\frac{1}{d_{qk}^{n-2}}\right)\left|\partial_k^2g_k^{\mathcal{S}}(y')\right|dy'.
\end{align*}
Similarly as in the previous case, we can show that for any \(\epsilon >0\), by taking \(m=m(\epsilon)\) sufficiently large, 
\begin{align*}
    |I-I'|\leq \frac{C'\epsilon}{|x'|^n}.
\end{align*}

We now estimate the integral \(I'\).

Assume that \(x_k>r\) and \(|x''|<\frac{3}{2}\). Note the following chain of inequalities
\begin{align*}
    0<x_k-y_k<x_k+y_k-2q<x_k-y_k+2q<x_k+y_k.
\end{align*}
Denote \(t:=x_k-y_k,~a:=2y_k-2q,~b=2q\). Then from the above chain of inequalities we have that \(t,\,a,\,b>0\). Then since \(\displaystyle f''(x)=\frac{(n-2)((n-1)x^2-R^2)}{(x^2+R^2)^\frac{n+2}{2}}\),
\begin{align*}
    \frac{1}{d^{n-2}}&+\frac{1}{d_k^{n-2}}-\frac{1}{d_q^{n-2}}-\frac{1}{d_{qk}^{n-2}}=f(t)+f(t+a+b)-f(t+a)-f(t+b)\\
    =&-\int_t^{t+a}f'(s)ds+\int_{t+b}^{t+a+b}f'(s)ds=\int_t^{t+a}(f'(s+b)-f'(s))ds\\
    =&\int_t^{t+a}\int_s^{s+b}f''(\tau)d\tau ds=(n-2)\int_t^{t+a}\int_s^{s+b}\frac{(n-1)\tau^2-R^2}{(\tau^2+R^2)^{\frac{n+2}{2}}}d\tau ds.
\end{align*}
Now  since \( R=|x''-y''|\leq |x''|+|y''|<\frac{3}{2}+\sqrt{n-2}r\) and \(\tau\geq s\geq t=x_k-y_k>\frac{3}{2}-r\), we have that
\begin{align}\label{eq1}
   (n-1)\tau^2-R^2\geq (n-1)\left(\frac{3}{2}-r\right)^2-\left(\frac{3}{2}+\sqrt{n-2}r)^2\right).
\end{align}
On the other hand since \( R\leq |x'|+1\leq \frac{4}{3}|x'|\) and \(\tau<t+a+b=x_k+y_k\leq \frac{4}{3}|x'|\), we have that
\begin{align}\label{eq2}
   (\tau^2+R^2)^{\frac{n+2}{2}}\leq 2^{\frac{n+2}{2}}\left(\frac{4}{3}\right)^{n+2}|x'|^{n+2}.
\end{align}
With these estimates \eqref{eq1} and \eqref{eq2}, we find that
\begin{align*}
    \int_t^{t+a}\int_s^{s+b}\frac{(n-1)\tau^2-R^2}{(\tau^2+R^2)^{\frac{n+2}{2}}}d\tau ds&\geq  C_n\int_t^{t+a}\int_s^{s+b} \frac{(n-1)\left(\frac{3}{2}-r\right)^2-\left(\frac{3}{2}+\sqrt{n-2}r)^2\right)}{|x'|^{n+2}}d\tau ds\\
    &\geq C_nab \frac{(n-1)\left(\frac{3}{2}-r\right)^2-\left(\frac{3}{2}+\sqrt{n-2}r)^2\right)}{|x'|^{n+2}}=\frac{C_n q\delta}{|x'|^{n+2}},
\end{align*}
and thus we see that \(I' \geq \frac{C_n}{|x'|^{n+2}}\) and hence by taking \(\epsilon>0\) sufficiently small,
\begin{align*}
    |I|\geq |I'|-|I-I'|\geq \frac{C}{|x'|^{n+2}}-\frac{C'\epsilon}{|x'|^{n+2}}\geq \frac{C}{|x'|^{n+2}}.
\end{align*}
Summarizing above, we get that
\begin{align*}
K_{281}(x,1)\begin{cases}
\geq  \left(\mathbbm{1}_{a<\frac{1}{2}}\frac{1}{x_n^{1-2a}}+\mathbbm{1}_{a=\frac{1}{2}}\log \frac{2}{x_n^2}\right)\frac{C}{|x'|^{n}}-\frac{C}{|x'|^{n-2}} & \text{ if } x_k <p, \\ 
 \leq -\left(\mathbbm{1}_{a<\frac{1}{2}}\frac{1}{x_n^{1-2a}}+\mathbbm{1}_{a=\frac{1}{2}}\log \frac{2}{x_n^2}\right)\frac{C}{|x'|^{n+2}}+\frac{C}{|x'|^{n-2}}& \text{ if } x_k>r \text{ and } |x''|<\alpha.
\end{cases}
\end{align*}
Then we finally see that 
\begin{align*}
    |K_{28}(x,1)|&=|K_{281}(x,1)+K_{282}(x,1)|\\
    &\geq |K_{281}(x,1)|-|K_{282}(x,1)|\\
    &\gtrsim \left(\mathbbm{1}_{a<\frac{1}{2}}\frac{1}{x_n^{1-2a}}+\mathbbm{1}_{a=\frac{1}{2}}\log \frac{2}{x_n}\right)\frac{C}{|x'|^{n+2}}-\frac{C}{|x'|^{n-2}},
\end{align*}
and thus for \(i=k\), (\ref{w_iupper}) gives
\begin{align*}
    |\partial_n^2 w_i(x,1)|&\gtrsim\left(\mathbbm{1}_{a<\frac{1}{2}}\frac{1}{x_n^{1-2a}}+\mathbbm{1}_{a=\frac{1}{2}}\log \frac{2}{x_n}\right)\frac{C}{|x'|^{n+2}}-\frac{C}{|x'|^{n-2}}-\frac{CN_3}{\left<x\right>^{n}}\\
    &\gtrsim\left(\mathbbm{1}_{a<\frac{1}{2}}\frac{1}{x_n^{1-2a}}+\mathbbm{1}_{a=\frac{1}{2}}\log \frac{2}{x_n}\right)\frac{C}{|x'|^{n+2}}-\frac{1}{|x'|^{n-2}}.
\end{align*}


\section{Proof of Theorem \ref{thm2} for the Navier-Stokes equations}\label{sec7}
Let \(w\) be a solution of the Stokes equations (\ref{stokeseq2})-(\ref{stokesbdry2}) with \(f=0\) and \(g\) given in Theorem 2.\\
Let \(\phi_1\in C_c^{\infty}(\mathbb{R}^n)\) be a cut-off function satisfying \(\phi_1 \geq 0\), \(\textrm{supp}(\phi_1)\subset B_2\) and \(\phi_1 \equiv 1\) in \(B_1\). Also let \(\phi_2\in C_c^{\infty}(-\infty, \infty)\) be a cut-off function satisfying \(\phi_2 \geq 0\), \(\textrm{supp}(\phi_2)\subset (-2,2)\) and \(\phi_2 \equiv 1\) in \((-1,1)\). Set \(\phi(x,t)=\phi_1(x)\phi_2(t)\) and define \(W=\phi w\). Then it is immediate that \(W=w\) in \(Q_1^+:=B_1^+\times(0,1)\) and \(W\bigm|_{x_n=0}=W\bigm|_{t=0}=0\). From the proof of Theorem \ref{thm2} for the Stokes equations, we find that
\begin{align*}
    \left\|W\right\|_{L^r(\mathbb{R}_+^n\times(0,1))}+ \left\|\nabla W\right\|_{L^r(\mathbb{R}_+^n\times(0,1))} &+\left\|\nabla\nabla' W\right\|_{L^{r}(\mathbb{R}_+^n\times (0,1))}\\
    &\leq C\left( \left\|w\right\|_{L^r(B_2^+\times(0,4))}+ \left\|\nabla w\right\|_{L^r(B_2^+\times(0,4))}+\left\|\nabla\nabla' w\right\|_{L^{r}(B_2^+\times (0,4))}\right)\\
    &\leq C\left\|g\right\|_{L^{\infty}((0,\infty);W^{2,1}(A))}\leq Ca
\end{align*}
for all \(1\leq r\leq \infty\), where \(\nabla':=(\partial_1,\cdots,\partial_{n-1})\) and \(a>0\) will be determined later.\\
We now consider the following perturbed Navier-Stokes equations in \(Q_+:=\mathbb{R}_+^n\times (0,1)\):
\begin{equation}\label{pertns}
\begin{dcases}
\begin{aligned}
  \partial_t v-\Delta v+\nabla q+\nabla \cdot (v\otimes v+v\otimes W+W\otimes v)&=-\nabla\cdot(W\otimes W),\\
  \text{div } v&= 0
\end{aligned}
\end{dcases}
\text{  in }  Q_+,
\end{equation}
with zero initial and boundary data
\begin{equation*}
    v\bigm|_{x_n=0}=v\bigm|_{t=0}=0.
\end{equation*}
We first show that the solution \(v\) for (\ref{pertns}) satisfies \(v, \nabla v\in L^r(Q_+)\cap L^\infty(Q_+)\) and \(\nabla^2 v\in L^r(Q_+)\) for all \(n+2<r<\infty\).\\
To show the existence we consider the following iteration: For a positive integer \(m\geq 1\),
\begin{equation}\label{iteration}
\begin{dcases}
\begin{aligned}
  \partial_t v^{m+1}-\Delta v^{m+1}+\nabla q^{m+1}&=-\nabla \cdot (v^m\otimes v^m+v^m\otimes W+W\otimes v^m+W\otimes W),\\
  \text{div } v^{m+1}&= 0
\end{aligned}
\end{dcases}
\text{  in }  Q_+,
\end{equation}
with zero initial and boundary data
\begin{equation*}
    v^{m+1}\bigm|_{x_n=0}=v^{m+1}\bigm|_{t=0}=0,
\end{equation*}
and define \(v^1\equiv 0\) and \(q^1\equiv 0\).\\

1) We first show the uniform-in-\(m\) estimate. 
By Proposition \ref{1}, we have
\begin{equation*}
    \left\|v^2(t)\right\|_{L^{r}(\mathbb{R}_+^n)}\leq C\int_{0}^{t}(t-s)^{-\frac{1}{2}}\left\|W(s)\otimes W(s)\right\|_{L^{r}(\mathbb{R}_+^n)}ds.
\end{equation*}
Using the above estimate and Young's inequality for convolution, we find that for \(r\in (1,\infty)\),
\begin{equation*}
    \left\|v^2\right\|_{L^r(Q_+)}\leq C_1\left\|W\otimes W\right\|_{L^{r}(Q_+)}\leq C_1\left\|W\right\|_{L^r(Q_+)}\left\|W\right\|_{L^\infty(Q_+)}.
\end{equation*}
Also by Proposition \ref{3}, we have for \(r>n+2\) that,
\begin{equation*}
     \left\|v^2\right\|_{L^\infty(Q_+)}\leq C_2\left\|W\otimes W\right\|_{L^{r}(Q_+)}\leq C_2\left\|W\right\|_{L^r(Q_+)}\left\|W\right\|_{L^\infty(Q_+)}.
\end{equation*}
Finally by Proposition \ref{2}, we have for \(r\in(1,\infty)\) that
\begin{equation*}
     \left\|\nabla v^2\right\|_{L^r(Q_+)}\leq C_3\left\|W\otimes W\right\|_{L^{r}(Q_+)}\leq C_3\left\|W\right\|_{L^r(Q_+)}\left\|W\right\|_{L^\infty(Q_+)}.
\end{equation*}
By the maximal regularity of the initial-boundary value problem for the Stokes equations in the half-space, we have 
\begin{equation*}
    \left\|\partial_t v^2\right\|_{L^r(Q_+)}+\left\|\nabla^2 v^2\right\|_{L^r(Q_+)}\leq \frac{1}{2n} C_4\left\|\nabla\cdot (W\otimes W)\right\|_{L^{r}(Q_+)}\leq C_4\left\|W\right\|_{L^{\infty}(Q_+)}\left\|\nabla W\right\|_{L^{r}(Q_+)}.
\end{equation*}
Then by the Sobolev embedding, it follows that
\begin{equation*}
    \left\|\nabla v^2\right\|_{L^\infty(Q_+)}\leq \frac{C_5}{C_4}\left(\left\|\nabla^2 v^2\right\|_{L^r(Q_+)}+\left\|\partial_t v^2\right\|_{L^r(Q_+)}\right)\leq C_5 \left\|W\right\|_{L^{\infty}(Q_+)}\left(\left\|W\right\|_{L^{r}(Q_+)}+\left\|\nabla W\right\|_{L^{r}(Q_+)}\right).
\end{equation*}
We now differentiate the equation (\ref{iteration}) in \(x_j\) (\(j=1,\,\cdots,\, n-1\)) to get
\begin{equation}
\begin{dcases}
\begin{aligned}
  \partial_t \partial_j v^{m+1}-\Delta \partial_j v^{m+1}+\nabla \partial_j q^{m+1}&=-\nabla \cdot (\partial_j v^m\otimes v^m+ v^m\otimes \partial_j v^m+\partial_j v^m\otimes W+v^m\otimes \partial_j W\\
  &~~~+\partial_j W\otimes v^m+W\otimes \partial_j v^m+\partial_j W\otimes W+W\otimes \partial_jW),\\
  \text{div } \partial_j v^{m+1}&= 0
\end{aligned}
\end{dcases}
\text{  in }  Q_+,
\end{equation}
with zero initial and boundary data
\begin{equation*}
    \partial_j v^{m+1}\mid_{x_n=0}=\partial_j v^{m+1}\mid_{t=0}=0.
\end{equation*}
By the maximal regularity for the Stokes equations, we find that
\begin{align*}
     \left\|\partial_t  v^2\right\|_{L^r(Q_+)}+\left\|\nabla^2 \partial_j v^2\right\|_{L^r(Q_+)}&\leq \frac{1}{2n}C_4\left\|\nabla\cdot(\partial_j W\otimes W+W\otimes \partial_j W)\right\|_{L^r(Q_+)}\\
    &\leq \frac{1}{2n} C_4\left(\left\|\nabla W\right\|_{L^{r}(Q_+)}\left\|\nabla W\right\|_{L^{\infty}(Q_+)}+\left\|\nabla\nabla' W\right\|_{L^{r}(Q_+)}\left\|W\right\|_{L^{\infty}(Q_+)}\right).
\end{align*}
We then find that since \(\nabla \cdot v^2=0\),
\begin{equation*}
    \left\|\nabla^2 \partial_n v_n^2\right\|_{L^{r}(Q_+)}\leq \sum_{j=1}^{n-1}\left\|\nabla^2 \partial_j v_j^2\right\|_{L^{r}(Q_+)}\leq  C_4\left(\left\|\nabla W\right\|_{L^{r}(Q_+)}\left\|\nabla W\right\|_{L^{\infty}(Q_+)}+\left\|\nabla\nabla' W\right\|_{L^{r}(Q_+)}\left\|W\right\|_{L^{\infty}(Q_+)}\right).
\end{equation*}
Then we have the following estimates:
 for \(j\neq n\) and \(1\leq k \leq n\) and \(r>n+2\),
\begin{align*}
    \left\|\partial_j\partial_k v^2\right\|_{L^{\infty}(Q_+)}&\leq \frac{C_5}{C_4}\left(\left\|\nabla^2\partial_j v^2\right\|_{L^{r}(Q_+)}+\left\|\partial_t\partial_j v^2\right\|_{L^{r}(Q_+)}\right)\\
    &\leq C_5\left(\left\|W\right\|_{L^{\infty}(Q_+)}+\left\|\nabla W\right\|_{L^{\infty}(Q_+)}\right)\left(\left\|\nabla W\right\|_{L^{r}(Q_+)}+\left\|\nabla\nabla'W\right\|_{L^{r}(Q_+)}\right)
\end{align*}
so that by letting \(C_6:=2n^2C_5\),
\begin{align*}
    \left\|\nabla\nabla'v^2\right\|_{L^{\infty}(Q_+)}\leq C_6 \left(\left\|W\right\|_{L^{\infty}(Q_+)}+\left\|\nabla W\right\|_{L^{\infty}(Q_+)}\right)\left(\left\|\nabla W\right\|_{L^{r}(Q_+)}+\left\|\nabla\nabla'W\right\|_{L^{r}(Q_+)}\right),
\end{align*}
and 
\begin{align*}
    \left\|\partial_n^2 v_n^2\right\|_{L^{\infty}(Q_+)}&\leq\sum_{k=1}^{n-1}\left\|\partial_n\partial_k v_k^2\right\|_{L^{\infty}(Q_+)}\leq C_6\left(\left\|W\right\|_{L^{\infty}(Q_+)}+\left\|\nabla W\right\|_{L^{\infty}(Q_+)}\right)\left(\left\|\nabla W\right\|_{L^{r}(Q_+)}+\left\|\nabla\nabla'W\right\|_{L^{r}(Q_+)}\right).
\end{align*}
Now let \(A:=\left\|W\right\|_{L^{r}(Q_+)}+\left\|W\right\|_{L^{\infty}(Q_+)}+\left\|\nabla W\right\|_{L^{r}(Q_+)}+\left\|\nabla W\right\|_{L^{\infty}(Q_+)}+\left\|\nabla\nabla'W\right\|_{L^{r}(Q_+)}\leq c^*a\)
for some fixed constant \(c^*>0\). 

Define \(\displaystyle C':=\max_{1\leq k\leq 6}C_k\). Then we find that by taking \(a>0\) sufficiently small such that \(C'c^*a\leq \frac{1}{100}\),
\begin{align*}
    \left\|v^2\right\|_{L^{r}(Q_+)}&+\left\|v^2\right\|_{L^{\infty}(Q_+)}+\left\|\nabla v^2\right\|_{L^{r}(Q_+)}+\left\|\nabla v^2\right\|_{L^{\infty}(Q_+)}+\left\|\nabla^2 v^2\right\|_{L^{r}(Q_+)}+\left\|\nabla\nabla'v^2\right\|_{L^{\infty}(Q_+)}+\left\|\partial_n^2 v_n^2\right\|_{L^{\infty}(Q_+)}\\
    &\leq 7C'A^2\leq 7C'c^*aA\leq \frac{7}{100}A<A.
\end{align*}
Suppose for \(m\geq 2\)
\begin{equation}\label{mthineq}
     \left\|v^m\right\|_{L^{r}(Q_+)}+\left\|v^m\right\|_{L^{\infty}(Q_+)}+\left\|\nabla v^m\right\|_{L^{r}(Q_+)}+\left\|\nabla v^m\right\|_{L^{\infty}(Q_+)}+\left\|\nabla^2 v^m\right\|_{L^{r}(Q_+)}+\left\|\nabla\nabla'v^m\right\|_{L^{\infty}(Q_+)}+\left\|\partial_n^2 v_n^m\right\|_{L^{\infty}(Q_+)}\leq A .
\end{equation}
We now estimate each term of the left hand side of the above inequality with \(v^m\) replaced by \(v^{m+1}\).
\begin{align*}
    \left\|v^{m+1}\right\|_{L^{r}(Q_+)} +\left\|v^{m+1}\right\|_{L^{\infty}(Q_+)}&+\left\|\nabla v^{m+1}\right\|_{L^{r}(Q_+)}\\
    &\leq (C_1+C_2+C_3)\left\|v^m\otimes v^m+v^m\otimes W+W\otimes v^m+W\otimes W\right\|_{L^{r}(Q_+)}\\
    &\leq (C_1+C_2+C_3)\left(\left\|v^m\right\|_{L^{r}(Q_+)}\left\|v^m\right\|_{L^{\infty}(Q_+)}+\left\|v^m\right\|_{L^{r}(Q_+)}\left\|W\right\|_{L^{\infty}(Q_+)}\right.\\
    &\left.+\left\|W\right\|_{L^{r}(Q_+)}\left\|v^m\right\|_{L^{\infty}(Q_+)}+\left\|W\right\|_{L^{r}(Q_+)}\left\|W\right\|_{L^{\infty}(Q_+)}\right)\\
    &\leq 4(C_1+C_2+C_3)A^2,
\end{align*}
\begin{align*}
    \left\|\partial_t v^{m+1}\right\|_{L^{r}(Q_+)}+\left\|\nabla^2 v^{m+1}\right\|_{L^{r}(Q_+)}&\leq\frac{C_4}{2n}\left\|\nabla\cdot(v^m\otimes v^m+v^m\otimes W+W\otimes v^m+W\otimes W)\right\|_{L^{r}(Q_+)}\\
    &\leq C_4\left(\left\|\nabla v^m\right\|_{L^{r}(Q_+)}\left\|v^m\right\|_{L^{\infty}(Q_+)}+\left\|\nabla v^m\right\|_{L^{r}(Q_+)}\left\|W\right\|_{L^{\infty}(Q_+)}\right.\\
    &\left.+\left\|\nabla W\right\|_{L^{r}(Q_+)}\left\|v^m\right\|_{L^{\infty}(Q_+)}+\left\|\nabla W\right\|_{L^{r}(Q_+)}\left\|W\right\|_{L^{\infty}(Q_+)}\right)\\
    &\leq 4C_4A^2,
\end{align*}
and
\begin{align*}
    \left\|\nabla v^{m+1}\right\|_{L^{\infty}(Q_+)}&\leq \frac{C_5}{C_4}\left(\left\|\nabla^2 v^{m+1}\right\|_{L^{r}(Q_+)}+\left\|\partial_t v^{m+1}\right\|_{L^{r}(Q_+)}\right)\leq 4\frac{C_5}{C_4}C_4A^2=4C_5A^2.
\end{align*}
By the maximal regularity for the Stokes equations, we find that for \(1\leq j \leq n-1\),
\begin{align*}
   \left\|\partial_t \partial_j v^{m+1} \right\|_{L^{r}(Q_+)}+\left\|\nabla^2 \partial_j v^{m+1} \right\|_{L^{r}(Q_+)}&\leq \frac{C_4}{2n}\left\|\nabla \cdot (\partial_j v^m\otimes v^m+ v^m\otimes \partial_j v^m+\partial_j v^m\otimes W+v^m\otimes \partial_j W\right.\\
    &\left.+\partial_j W\otimes v^m+W\otimes \partial_j v^m+\partial_j W\otimes W+W\otimes \partial_jW\right\|_{L^{r}(Q_+)}\\
    &\leq \frac{C_4}{2n}2n\left(\left\|v^m\right\|_{L^{r}(Q_+)}+\left\|\nabla v^m\right\|_{L^{r}(Q_+)}+\left\|\nabla\nabla'v^m\right\|_{L^{r}(Q_+)}\right.\\
    &\left.+\left\|W\right\|_{L^{r}(Q_+)}+\left\|\nabla W\right\|_{L^{r}(Q_+)}+\left\|\nabla\nabla'W\right\|_{L^{r}(Q_+)}\right)\\
    &\times\left(\left\|v^m\right\|_{L^{\infty}(Q_+)}+\left\|\nabla v^m\right\|_{L^{\infty}(Q_+)}+\left\|\nabla\nabla'v^m\right\|_{L^{\infty}(Q_+)}\right.\\
    &\left.+\left\|W\right\|_{L^{\infty}(Q_+)}+\left\|\nabla W\right\|_{L^{\infty}(Q_+)}+\left\|\nabla\nabla'W\right\|_{L^{\infty}(Q_+)}\right)\\
    &\leq 4C_4 A^2.
\end{align*}
We then see that
\begin{align*}
    \left\|\nabla^2\partial_n v_n^{m+1}\right\|_{L^{r}(Q_+)}\leq \sum_{j=1}^{n-1}\left\|\nabla^2 \partial_j v_j^{m+1}\right\|_{L^{r}(Q_+)}\leq 4nC_4A^2.
\end{align*}
Then we have that for \(j\neq n\) and \(1\leq k\leq n\),
\begin{align*}
    \left\|\partial_j\partial_k v^{m+1}\right\|_{L^{\infty}(Q_+)}&\leq \frac{C_5}{C_4}\left(\left\|\nabla^2\partial_jv^{m+1}\right\|_{L^r(Q_+)}+\left\|\partial_t\partial_jv^{m+1}\right\|_{L^r(Q_+)}\right)\leq \frac{4C_4 C_5}{C_4}A^2= 4C_5 A^2,
\end{align*}
and we see that
\begin{align*}
    \left\|\nabla\nabla'v^{m+1}\right\|_{L^{\infty}(Q_+)}\leq 4n^2 C_5A^2=2C_6 A^2.
\end{align*}
Finally we find that
\begin{align*}
    \left\|\partial_n^2 v_n^{m+1}\right\|_{L^\infty(Q_+)}&\leq \sum_{k=1}^{n-1}\left\|\partial_k^2 v_k^{m+1}\right\|_{L^\infty(Q_+)}\leq 2C_6 A^2.
\end{align*}
Hence summing all the estimates above gives
\begin{align*}
     \left\|v^m\right\|_{L^{r}(Q_+)}+\left\|v^m\right\|_{L^{\infty}(Q_+)}+\left\|\nabla v^m\right\|_{L^{r}(Q_+)}&+\left\|\nabla v^m\right\|_{L^{\infty}(Q_+)}+\left\|\nabla^2 v^m\right\|_{L^{r}(Q_+)}+\left\|\nabla\nabla'v^m\right\|_{L^{\infty}(Q_+)}+\left\|\partial_n^2 v_n^m\right\|_{L^{\infty}(Q_+)}\\
     &\leq 24C'A^2 \leq 24C'c^*aA\leq \frac{6}{25}A<A.
\end{align*}
Thus (\ref{mthineq}) holds for all \(m\geq 2\).\\

2. We now show the Cauchy estimate. Denote \(V^m:=v^{m+1}-v^{m}\) and \(Q^{m+1}:=q^{m+1}-q^m\) for \(m\geq 1\). Then \((V^{m+1}, Q^{m+1})\) solves
\begin{equation}\label{difiteration}
\begin{dcases}
\begin{aligned}
  \partial_t V^{m+1}-\Delta V^{m+1}+\nabla Q^{m+1}&=-\nabla \cdot (V^m\otimes v^m+v^{m-1}\otimes V^{m}+V^{m}\otimes W+W\otimes V^{m}),\\
  \nabla\cdot V^{m+1}&= 0
\end{aligned}
\end{dcases}
\text{  in }  Q_+,
\end{equation}
with zero initial and boundary data
\begin{equation*}
    V^{m+1}\mid_{x_n=0}=V^{m+1}\mid_{t=0}=0.
\end{equation*}
We then obtain 
\begin{align*}
    \left\|V^{m+1}\right\|_{L^{r}(Q_+)}+\left\|V^{m+1}\right\|_{L^{\infty}(Q_+)}&+\left\|\nabla V^{m+1}\right\|_{L^{r}(Q_+)}\\
    &\leq (C_1+C_2+C_3)\left\|V^{m}\otimes v^m+v_{m-1}\otimes V^m+V^m\otimes W+W\otimes V_m\right\|_{L^{r}(Q_+)}\\
    &\leq(C_1+C_2+C_3)\left(\left\|V^m\right\|_{L^{r}(Q_+)}\left(\left\|v^m\right\|_{L^{\infty}(Q_+)}+\left\|W\right\|_{L^{\infty}(Q_+)}\right)\right.\\
    &\left.+\left\|V^m\right\|_{L^{\infty}(Q_+)}\left(\left\|v^{m-1}\right\|_{L^{r}(Q_+)}+\left\|W\right\|_{L^{r}(Q_+)}\right)\right)\\
    &\leq 2(C_1+C_2+C_3)A\left(\left\|V^m\right\|_{L^{r}(Q_+)}+\left\|V^m\right\|_{L^{\infty}(Q_+)}\right),
\end{align*}
\begin{align*}
    \left\|\partial_t V^{m+1}\right\|_{L^{r}(Q_+)}+\left\|\nabla^2 V^{m+1}\right\|_{L^{r}(Q_+)}&\leq \frac{1}{2n}C_4\left\|\nabla\cdot(V^{m}\otimes v^m+v_{m-1}\otimes V^m+V^m\otimes W+W\otimes V_m)\right\|_{L^{r}(Q_+)}\\
    &\leq \frac{C_4}{2}\left(\left\|\nabla V^m\right\|_{L^{r}(Q_+)}\left\|v^m\right\|_{L^{\infty}(Q_+)}+\left\|V^m\right\|_{L^{\infty}(Q_+)}\left\|\nabla v^m\right\|_{L^{r}(Q_+)}\right.\\
    &\left.+\left\|\nabla v^{m-1}\right\|_{L^{r}(Q_+)}\left\|V^m\right\|_{L^{\infty}(Q_+)}+\left\|\nabla V^m\right\|_{L^{r}(Q_+)}\left\|v^{m-1}\right\|_{L^{\infty}(Q_+)}\right.\\
    &\left.+2\left\|\nabla W\right\|_{L^{r}(Q_+)}\left\|V^{m}\right\|_{L^{\infty}(Q_+)}+2\left\|W\right\|_{L^{\infty}(Q_+)}\left\|\nabla V^{m}\right\|_{L^{r}(Q_+)}\right)\\
    &\leq 2C_4A \left(\left\|\nabla V^{m}\right\|_{L^r(Q_+)}+\left\|V^{m}\right\|_{L^{\infty}(Q_+)}\right),
\end{align*}
and
\begin{align*}
    \left\|\nabla V^{m+1}\right\|_{L^{\infty}(Q_+)}&\leq \frac{C_5}{C_4}\left(\left\|\partial_t V^{m+1}\right\|_{L^{r}(Q_+)}+\left\|\nabla^2 V^{m+1}\right\|_{L^{r}(Q_+)}\right)\\
    &\leq \frac{C_5}{C_4}(2C_3A+2C_4A)\left(\left\|V^m\right\|_{L^{r}(Q_+)}+\left\|\nabla V^m\right\|_{L^{r}(Q_+)}+\left\|V^m\right\|_{L^{\infty}(Q_+)}\right)\\
    &=2C_5A\left(\left\|V^m\right\|_{L^{r}(Q_+)}+\left\|\nabla V^m\right\|_{L^{r}(Q_+)}+\left\|V^m\right\|_{L^{\infty}(Q_+)}\right).
\end{align*}
We now differentiate the equation (\ref{difiteration}) in \(x_j\) (\(j=1,\,\cdots,\, n-1\)) to get
\begin{equation}
\begin{dcases}
\begin{aligned}
  \partial_t \partial_j V^{m+1}-\Delta \partial_j V^{m+1}+\nabla \partial_j Q^{m+1}&=-\nabla \cdot (\partial_j V^m\otimes v^m+ V^m\otimes \partial_j v^m+\partial_j v^{m-1}\otimes V^m+v^{m-1}\otimes \partial_j V^m\\
  &~~~+\partial_j V^m\otimes W+V^m\otimes \partial_j W+\partial_j W\otimes V^m+W\otimes \partial_j V^m),\\
  \text{div } \partial_j V^{m+1}&= 0
\end{aligned}
\end{dcases}
\text{  in }  Q_+,
\end{equation}
with zero initial and boundary data
\begin{equation*}
    \partial_j V^{m+1}\mid_{x_n=0}=\partial_j V^{m+1}\mid_{t=0}=0.
\end{equation*}
By the maximal regularity for the Stokes equations, we find that,
\begin{align*}
    \left\|\partial_t \partial_j V^{m+1} \right\|_{L^{r}(Q_+)}&+\left\|\nabla^2 \partial_j V^{m+1} \right\|_{L^{r}(Q_+)}\\
    &\leq \frac{C_4}{2n}\left\|\nabla \cdot (\partial_j V^m\otimes v^m+ V^m\otimes \partial_j v^m+\partial_j v^{m-1}\otimes V^m+v^{m-1}\otimes \partial_j V^m\right.\\
  &\left.~~~+\partial_j V^m\otimes W+V^m\otimes \partial_j W+\partial_j W\otimes V^m+W\otimes \partial_j V^m)\right\|_{L^{r}(Q_+)}\\
  &\leq\frac{C_4}{2}\left[\left(\left\|\nabla\partial_j V^m\right\|_{L^{r}(Q_+)}+\left\|\nabla V^m\right\|_{L^{r}(Q_+)}\right)\left(\left\|v^m\right\|_{L^{\infty}(Q_+)}+\left\|v^{m-1}\right\|_{L^{\infty}(Q_+)}+2\left\|W\right\|_{L^{\infty}(Q_+)}\right.\right.\\
  &\left.+\left\|\nabla v^m\right\|_{L^{\infty}(Q_+)}+\left\|\nabla v^{m-1}\right\|_{L^{\infty}(Q_+)}+2\left\|\nabla W\right\|_{L^{\infty}(Q_+)}\right)\\
  &+\left(\left\|V^m\right\|_{L^{\infty}(Q_+)}+\left\|\nabla V^m\right\|_{L^{\infty}(Q_+)}\right)\left(\left\|\nabla v^m\right\|_{L^{r}(Q_+)}+\left\|\nabla v^{m-1}\right\|_{L^{r}(Q_+)}+2\left\|\nabla W\right\|_{L^{r}(Q_+)}\right.\\
  &\left.\left.+\left\|\nabla\partial_j v^m\right\|_{L^{r}(Q_+)}+\left\|\nabla \partial_j v^{m-1}\right\|_{L^{r}(Q_+)}+2\left\|\nabla\partial_j W\right\|_{L^{r}(Q_+)}\right)\right]\\
  &\leq 2C_4A\left(\left\|V^m\right\|_{L^{\infty}(Q_+)}+\left\|\nabla V^m\right\|_{L^{\infty}(Q_+)}+\left\|\nabla V^m\right\|_{L^{r}(Q_+)}+\left\|\nabla^2 V^m\right\|_{L^{r}(Q_+)}\right).
\end{align*}
Hence we find that for \(j\neq n\) and \(1\leq k\leq n\),
\begin{align*}
    \left\|\partial_j\partial_k V^{m+1}\right\|_{L^{\infty}(Q_+)}&\leq\frac{C_5}{C_4}\left(\left\|\nabla^2\partial_jV^{m+1}\right\|_{L^r(Q_+)}+\left\|\partial_t\partial_jV^{m+1}\right\|_{L^r(Q_+)}\right)\\
    &\leq 2C_5 A\left(\left\|V^m\right\|_{L^{\infty}(Q_+)}+\left\|\nabla V^m\right\|_{L^{\infty}(Q_+)}+\left\|\nabla V^m\right\|_{L^{r}(Q_+)}+\left\|\nabla^2 V^m\right\|_{L^{r}(Q_+)}\right),
\end{align*}
and thus
\begin{align*}
    \left\|\nabla\nabla' V^{m+1}\right\|_{L^{\infty}(Q_+)}&\leq 2n^2C_5A\left(\left\|V^m\right\|_{L^{\infty}(Q_+)}+\left\|\nabla V^m\right\|_{L^{\infty}(Q_+)}+\left\|\nabla V^m\right\|_{L^{r}(Q_+)}+\left\|\nabla \partial_j V^m\right\|_{L^{r}(Q_+)}\right)\\
    &\leq C_6A\left(\left\|V^m\right\|_{L^{\infty}(Q_+)}+\left\|\nabla V^m\right\|_{L^{\infty}(Q_+)}+\left\|\nabla V^m\right\|_{L^{r}(Q_+)}+\left\|\nabla^2 V^m\right\|_{L^{r}(Q_+)}\right).
\end{align*}
Finally we obtain that
\begin{align*}
    \left\|\partial_n^2 V_n^{m+1}\right\|_{L^{\infty}(Q_+)}&\leq \sum_{k=1}^{n-1}\left\|\partial_n\partial_k V_k^{m+1}\right\|_{L^{\infty}(Q_+)}\\
    &\leq C_6 A\left(\left\|V^m\right\|_{L^{\infty}(Q_+)}+\left\|\nabla V^m\right\|_{L^{\infty}(Q_+)}+\left\|\nabla V^m\right\|_{L^{r}(Q_+)}+\left\|\nabla^2 V^m\right\|_{L^{r}(Q_+)}\right).
\end{align*}
Thus summing all the above estimates gives
\begin{align*}
     \left\|V^{m+1}\right\|_{L^{r}(Q_+)}&+\left\|V^{m+1}\right\|_{L^{\infty}(Q_+)}+\left\|\nabla V^{m+1}\right\|_{L^{r}(Q_+)}+\left\|\nabla V^{m+1}\right\|_{L^{\infty}(Q_+)}\\
     &+\left\|\nabla^2 V^{m+1}\right\|_{L^{r}(Q_+)}+\left\|\nabla\nabla'V^{m+1}\right\|_{L^{\infty}(Q_+)}+\left\|\partial_n^2 V_n^{m+1}\right\|_{L^{\infty}(Q_+)}\\
     &\leq 12C'A\left( \left\|V^{m}\right\|_{L^{r}(Q_+)}+\left\|V^{m}\right\|_{L^{\infty}(Q_+)}+\left\|\nabla V^{m}\right\|_{L^{r}(Q_+)}+\left\|\nabla V^{m}\right\|_{L^{\infty}(Q_+)}+\left\|\nabla^2 V^m\right\|_{L^{r}(Q_+)}\right)\\
     &\leq 12C'c^*a\left( \left\|V^{m}\right\|_{L^{r}(Q_+)}+\left\|V^{m}\right\|_{L^{\infty}(Q_+)}+\left\|\nabla V^{m}\right\|_{L^{r}(Q_+)}+\left\|\nabla V^{m}\right\|_{L^{\infty}(Q_+)}+\left\|\nabla^2 V^m\right\|_{L^{r}(Q_+)}\right)\\
     &\leq \frac{1}{2}\left( \left\|V^{m}\right\|_{L^{r}(Q_+)}+\left\|V^{m}\right\|_{L^{\infty}(Q_+)}+\left\|\nabla V^{m}\right\|_{L^{r}(Q_+)}+\left\|\nabla V^{m}\right\|_{L^{\infty}(Q_+)}+\left\|\nabla^2 V^m\right\|_{L^{r}(Q_+)}\right).
\end{align*}
This implies that \((v^m, \nabla v^m, \nabla\nabla'v^m, \nabla^2 v^m, \partial_n^2 v^m)\) converges to \((v, \nabla v, \nabla\nabla'v,\nabla^2 v, \partial_n^2v)\) in
\begin{equation*}
    L^{\infty}(Q_+)\times L^{\infty}(Q_+)\times L^\infty(Q_+)\times L^r(Q_+)\times L^r(Q_+)
\end{equation*}
such that \(v\) solves (\ref{pertns})  with an appropriate distribution \(q\).
We now set \(u:=v+W\) and \(p:=q+\pi\). Then \(u\) becomes a weak solution of the Navier-Stokes equations (\ref{navierstokeseq-10}) in \(Q_+\) with the no-slip boundary condition (\ref{SS-20}). Also, then the claim that \(u\) satisfies \eqref{thm2estimate1} with \(Q_1^+\) in place of \(Q_2^+\) and \eqref{thm2estimate2} follows directly from the construction.


\section{Appendix}
\subsection{Proof of Lemma 5}\label{apdx81}
\begin{statement}{Lemma \ref{alg}}
If \(0<s<t<1\), \(0<a<1\) and \(c>0\), then
\begin{align*}
    \frac{e^{-\frac{x_n^2}{cs}}}{(1-t+s)^{1-a}}\lesssim \frac{1}{(x_n^2+1-t+s)^{1-a}}.
\end{align*}
\end{statement}
\begin{proof}
First note that the function \(f(t)=(t+1)e^{-\frac{t}{c}}\) (\(t>0\)), has its maximum at \(t=c-1\) and thus \(f(t)\leq ce^{-\frac{c-1}{c}}=:C\).
Then letting \(u=\frac{x_n^2}{s}\) gives
\begin{align*}
    e^{-\frac{x_n^2}{cs}}\leq C\frac{s}{x_n^2+s}\leq C\left(\frac{s}{x_n^2+s}\right)^{1-a},
\end{align*}
since \(0<a\leq \frac{1}{2}\).
Clearly \(e^{-\frac{x_n^2}{cs}}\lesssim C\), which implies that
\begin{align*}
    e^{-\frac{x_n^2}{cs}}\lesssim C\min \left\{1,\left(\frac{s}{x_n^2+s}\right)^{1-a}\right\}.
\end{align*}
Now we note that
\begin{align*}
    \min\left\{1,~ \frac{s}{x_n^2+s}\right\}\frac{1}{1-t+s}\leq \min \left\{\frac{1}{1-t+s},~ \frac{s}{1-t+s}\frac{1}{x_n^2+s}\right\}\leq \min\left\{\frac{1}{1-t+s},~\frac{1}{x_n^2+s}\right\},
\end{align*}
and 
\begin{align*}
    \min\left\{\frac{1}{1-t+s},~\frac{1}{x_n^2+s}\right\}\leq\frac{2}{x_n^2+1-t+s}.
\end{align*}
The above inequality follows since if \(x_n^2+s>1-t+s\), then 
\begin{align*}
    \frac{1}{x_n^2+s}=\frac{2}{x_n^2+x_n^2+2s}<\frac{2}{x_n^2+1-t+2s}<\frac{2}{x_n^2+1-t+s},
\end{align*}
while if \(x_n^2+s<1-t+s\), then \begin{align*}
    \frac{1}{1-t+s}=\frac{2}{(1-t)+(1-t)+2s}<\frac{2}{x_n^2+1-t+2s}<\frac{2}{x_n^2+1-t+s}.
\end{align*}
Thus we have that 
\begin{align*}
    \min\left\{1,~\frac{s}{x_n^2+s}\right\}\leq \frac{2(1-t+s)}{x_n^2+1-t+s},
\end{align*}
and hence 
\begin{align*}
    e^{-\frac{x_n^2}{cs}}\lesssim\frac{2^{1-a}(1-t+s)^{1-a}}{(x_n^2+1-t+s)^{1-a}}.
\end{align*}
We obtain the desired inequality.
\end{proof}

\subsection{Proof of Lemma 7}\label{apdx82}

\begin{statement}{Lemma \ref{integralidentity}}
For \(b,\, c>0\) and \(\alpha\in \left[\frac{1}{2},\,1\right)\), \(\beta \in \left[\frac{1}{2}\,,1\right]\),
\begin{equation*}
    \int_{0}^{1}\frac{1}{(u+c)^\alpha (u+b)^{\beta}}du\lesssim 
    \begin{cases}
\log\left(2+\frac{1}{\sqrt{b+c}}\right) & \textup{if } \alpha=\beta=\frac{1}{2},\\ 
\displaystyle\frac{1}{(c+b)^{\alpha+\beta-1}} & \textup{if } \frac{1}{2}\leq \alpha <1,\quad \frac{1}{2}\leq \beta <1,\\ 

\displaystyle\frac{1}{(b+c)^{\alpha}}\log\left(\frac{c}{b}+1\right) & \textup{if } \frac{1}{2}\leq \alpha <1,\quad \beta=1.
\end{cases}
\end{equation*}
\end{statement} 
\begin{proof}
We first show when \(b , \,c <1\).\\
1) If $\alpha=\beta=\frac{1}{2}$, it is direct that
\begin{equation}\label{logintegral}
     \int_{0}^{1}\frac{1}{(u+b)^{\beta}(u+c)^{\alpha}}du =2\log\frac{\sqrt{b+1}+\sqrt{c+1}}{\sqrt{b}+\sqrt{c}},
\end{equation}
which gives the last formula in the lemma.\\
    2) Next, we consider the case that $\alpha, \beta\in \left[\frac{1}{2}, 1\right)$. 
    Via the change of variable, we rewrite the integral as follows:
    \[
    \int_{0}^{1}\frac{1}{(u+b)^{\beta}(u+c)^{\alpha}}du 
    =\frac{1}{b^{\alpha+\beta-1}}\int_{0}^{1/b}\frac{1}{(u+\frac{c}{b})^{\alpha}(u+1)^{\beta}}du=:I.
    \]
    If \(c>\frac{1}{2}\), then using \(\frac{b+c}{4}<c\),
    \begin{equation*}
        I\leq \frac{1}{c^{\alpha}}\int_{0}^{1}\frac{1}{(u+b)^{\beta}}du\leq \frac{(1+b)^{1-\beta}}{c^{\alpha}}\lesssim\frac{1}{(b+c)^{\alpha+\beta-1}}.
    \end{equation*}
    We now treat the case $c<\frac{1}{2}$. We split the integral \(I\) as follows:
    \[
    I=\frac{1}{b^{\alpha+\beta-1}}\int_{0}^{c/b}\frac{1}{(u+\frac{c}{b})^{\alpha}(u+1)^{\beta}} du+\frac{1}{b^{\alpha+\beta-1}}\int_{c/b}^{1/b}\frac{1}{(u+\frac{c}{b})^{\alpha}(u+1)^{\beta}}du=:I_1+I_2.
    \]
 Estimating each term separately,  we have that
 \begin{align*}
      I_1 &\le \frac{1}{b^{\alpha+\beta-1}}\left(\frac{b}{c}\right)^{\alpha}\int_{0}^{c/b}\frac{1}{(u+1)^{\beta}}du\lesssim \frac{1}{b^{\beta-1}c^{\alpha}}\left[\left(\frac{c}{b}+1
 \right)^{1-\beta}-1 \right]\\
  &\lesssim \frac{(c+b)^{1-\beta}-b^{1-\beta}}{c^{\alpha}}
 \lesssim \frac{c^{1-\alpha}}{(c+b)^{\beta}}\lesssim \frac{1}{(c+b)^{\alpha+\beta-1}}.
 \end{align*}
On the other hand, we have that since \(c<\frac{1}{2}\),
\[
   I_2 \le \frac{1}{b^{\alpha-1}(c+b)^{\beta}}\int_{c/b}^{1/b}\frac{1}{\left(u+\frac{c}{b}\right)^{\alpha}} du\lesssim \frac{b^{1-\alpha}}{(c+b)^{\beta}}(1+c)^{1-\alpha}\lesssim \frac{1}{(c+b)^{\alpha+\beta-1}}.
\]  
\\
3) It remains to treat the case when $\alpha\in \left[\frac{1}{2}, 1\right)$ and $\beta=1$. Indeed, again considering \(c>\frac{1}{2}\), we have
\begin{equation*}
    I\leq\frac{1}{c^{\alpha}}\log\left(1+\frac{1}{b}\right)\lesssim\frac{1}{c^{\alpha}}\log\left(1+\frac{c}{b}\right)\lesssim\frac{1}{(b+c)^{\alpha}}\log\left(1+\frac{c}{b}\right),
\end{equation*}
and if $c<\frac{1}{2}$, we obtain that
\[
I_1 \le \frac{1}{b^{\alpha}}\int_{0}^{c/b}\frac{1}{(u+\frac{c}{b})^{\alpha}(u+1)}du.
\]
If $\frac{c}{b}<\frac{1}{2}$, then
\begin{equation*}
    I_1 \leq \frac{1}{b^{\alpha}}\int_{0}^{\infty}\frac{1}{(u+\frac{c}{b})^{\alpha}(u+1)}du\lesssim\frac{1}{b^{\alpha}}\lesssim\frac{1}{(b+c)^{\alpha}},
\end{equation*}
since the integral above is convergent.\\
If $\frac{c}{b}\ge \frac{1}{2}$, it is straight forward that
\begin{equation*}
    I_1 \le \frac{1}{c^{\alpha}}\log\left(1+\frac{c}{b}\right)\lesssim\frac{1}{(c+b)^{\alpha}} \log \left(\frac{c}{b}+1 \right).
\end{equation*}
For \(I_2\), we have that if \(\frac{c}{b}\geq\frac{1}{2}\), then
\begin{align*}
    I_2\lesssim \frac{1}{b^{\alpha}}\int_{\frac{c}{b}}^{\frac{1}{b}}\frac{1}{(u+1)^{\alpha+1}}du\lesssim \frac{1}{b^{\alpha}}\lesssim \frac{1}{(b+c)^{\alpha}},
\end{align*}
and if \(\frac{c}{b}<\frac{1}{2}\), then
\begin{align*}
    I_2\leq \frac{1}{b^{\alpha}}\int_{\frac{c}{b}}^{\frac{1}{b}}\frac{1}{\left(u+\frac{c}{b}\right)^{\alpha}(u+1)}du\leq  \frac{1}{b^{\alpha}}\int_{0}^{\infty}\frac{1}{\left(u+\frac{c}{b}\right)^{\alpha}(u+1)}du\lesssim \frac{1}{b^{\alpha}}\lesssim \frac{1}{(b+c)^{\alpha}},
\end{align*}
since the last integral is convergent.\\

We now show when \(b>1\) or \(c>1\).\\
1) If \(\frac{1}{2}\leq \alpha<1\) and \(\frac{1}{2}\leq \beta<1\), we find that if \(c\geq b\) (so that \(c>1\)), then
\begin{equation*}
    I\leq \frac{1}{c^{\alpha}}\int_{0}^{1}\frac{1}{(u+b)^\beta}du\lesssim \frac{(1+b)^{1-\beta}}{c^{\alpha}}\leq \frac{(c+b)^{1-\beta}}{c^{\alpha}}\lesssim \frac{1}{(c+b)^{\alpha+\beta-1}}.
\end{equation*}
If \(b>c\) (so that \(b>1\)), we may switch the role of \(b\) and \(\beta\) to \(c\) and \(\alpha\) to get the desired result.\\
2) If \(\alpha=\beta=\frac{1}{2}\), the result follows from the identity (\ref{logintegral}).\\
3) If \(\alpha\in \left[\frac{1}{2},1\right)\) and \(\beta=1\), we find that if \(c\geq b\), then
\begin{equation*}
    \int_{0}^{1}\frac{1}{(u+c)^{\alpha}(u+b)}du\leq \frac{1}{c^{\alpha}}\int_{0}^{1}\frac{1}{u+b}du=\frac{\log\left(1+\frac{1}{b}\right)}{c^{\alpha}}\lesssim \frac{\log\left(1+\frac{c}{b}\right)}{(c+b)^{\alpha}},
\end{equation*}
and finally if \(b>c\), then
\begin{equation*}
    \int_{0}^{1}\frac{1}{(u+c)^{\alpha}(u+b)}\leq \frac{1}{b}\int_{0}^{1}\frac{1}{(u+c)^{\alpha}}du\lesssim \frac{(1+c)^{1-\alpha}}{b}\lesssim \frac{1}{(c+b)^{\alpha}}.
\end{equation*}
This completes the proof.
\end{proof}
\newpage
\subsection{A figure of the spatial boundary data}\label{apdx83}
The following is a figure of a spatial boundary data \(\mathcal{G}\) satisfying all the assumptions given in Theorem 4.

\begin{figure}[b]
\includegraphics[width=14cm]{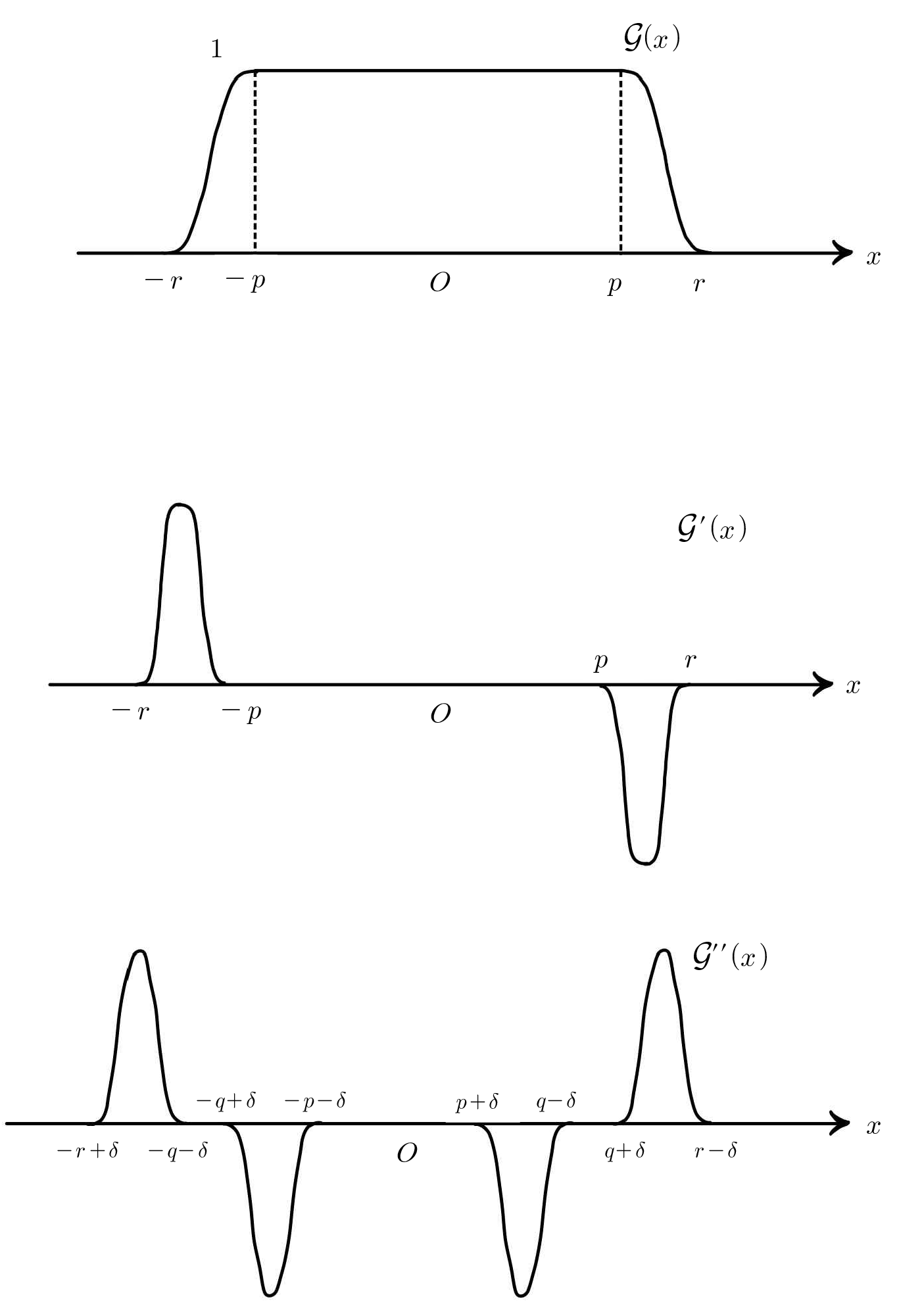}
\caption{Figure of \(\mathcal{G}, \mathcal{G}', \mathcal{G}''\),  \(\delta:=\frac{1}{20\sqrt{n-1}}\)}
\centering
\end{figure}

\subsection{A figure of the domain}\label{apdx84}
We also include the domain for \(u\) such that the conclusions of Theorem 5 hold when \(n=3\), \(k=1\) and \(i=2\) (\(i\neq k\) case), and \(n=3\) and \(k=1\) (\(i=k\) case). The "+" and "\(-\)" in the figures denote the respective signs of \(\partial_n^2 u(x,1)\).
\begin{figure}[!htb]
\includegraphics[width=12cm]{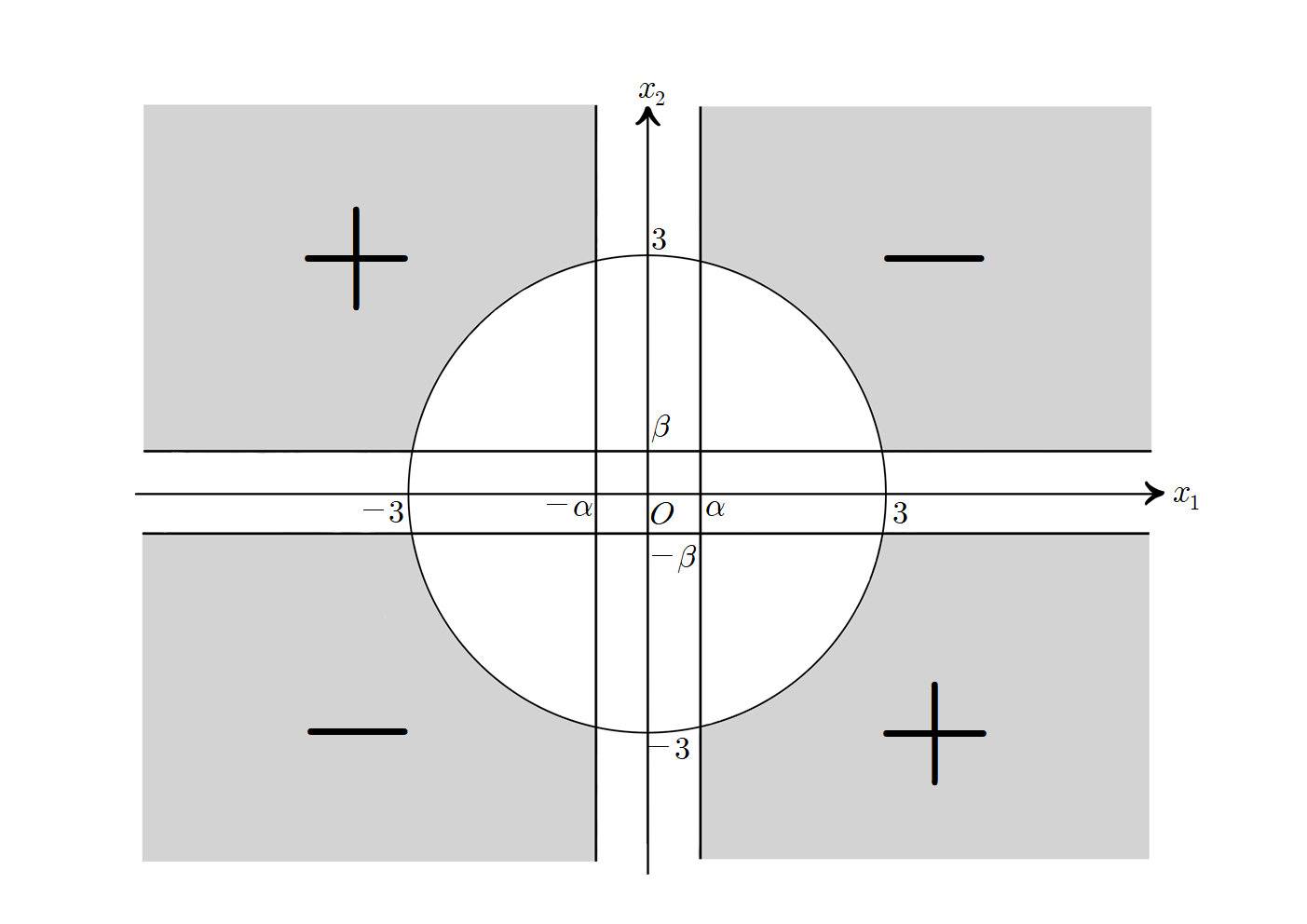}
\caption{(Case \(i\neq k\)) Graph of the domain for \(n=3\), \(i=1\) and \(k=2\).}

\includegraphics[width=15cm]{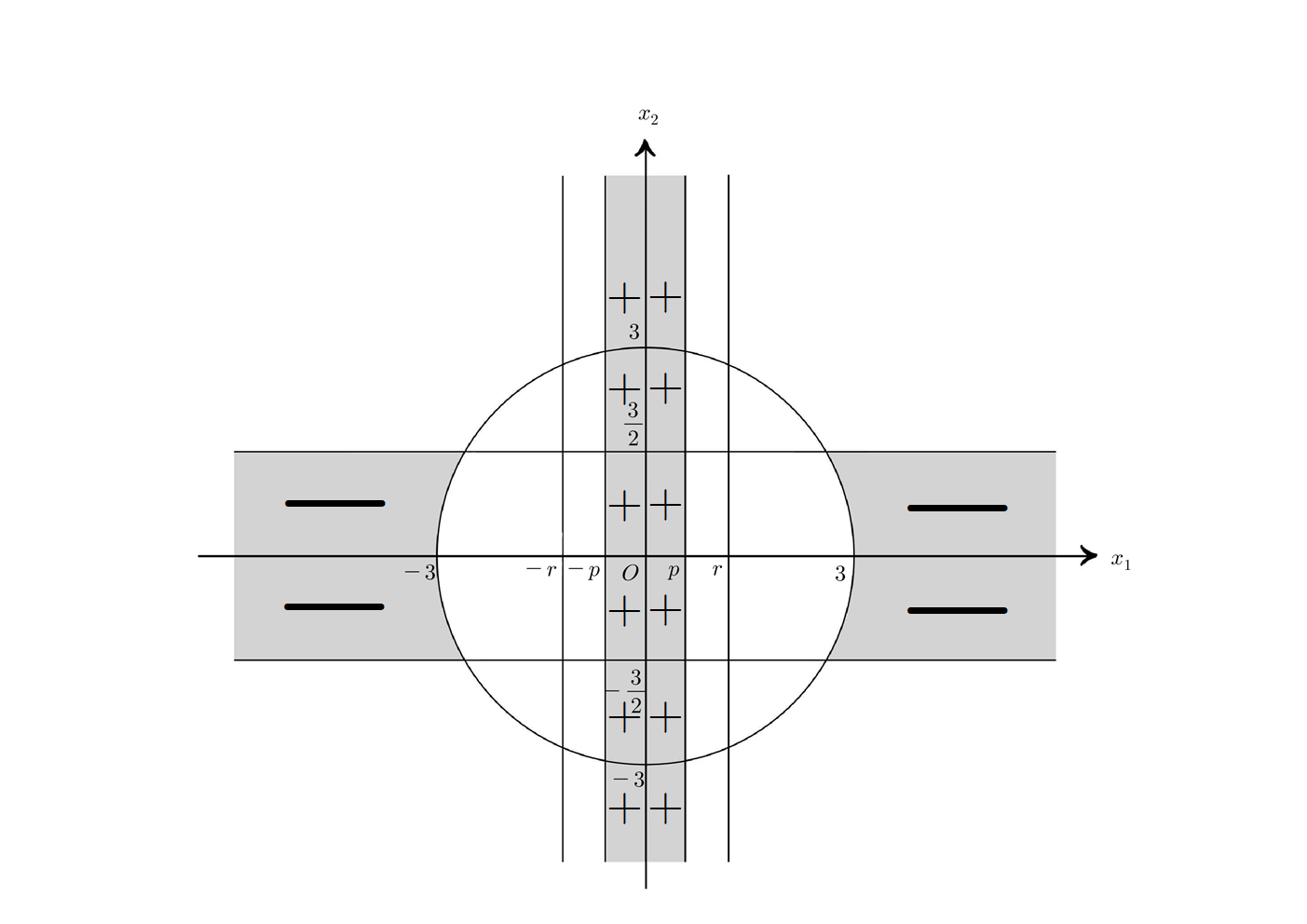}
\caption{(Case \(i=k\)) Graph of the domain for \(n=3\) and \(k=1\).}
\centering
\end{figure}

\newpage

\section*{Acknowledgement}
K. Kang is supported by NRF-2019R1A2C1084685. C. Min is supported by NRF-2019R1A2C1084685.



\begin{equation*}
\left.
\begin{array}{cc}
{\mbox{Kyungkeun Kang}}\qquad&\qquad {\mbox{Chanhong Min}}\\
{\mbox{Department of Mathematics }}\qquad&\qquad
 {\mbox{Department of Mathematics}} \\
{\mbox{Yonsei University
}}\qquad&\qquad{\mbox{Yonsei University}}\\
{\mbox{Seoul, Republic of Korea}}\qquad&\qquad{\mbox{Seoul, Republic of Korea}}\\
{\mbox{kkang@yonsei.ac.kr }}\qquad&\qquad
{\mbox{michelcamilo15@gmail.com }}
\end{array}\right.
\end{equation*}


\begin{thebibliography}{9}

\bibitem{ChangJin}
T. Chang and B. Jin, {\it Initial and boundary values for \(L_\alpha^q(L^p)\) solution of the Navier-Stokes equations in the half-space}, J. Math.
Anal. Appl. {\bf 439}, no. 1, 70-90 (2016).

\bibitem{Chang-Kang23}
T. Chang and K. Kang, {\it Singular weak solutions near boundaries in a half-space away from localized force for the Stokes and Navier-Stokes equations}, preprint, arXiv:2303.05746.

\bibitem{KangChang22}
T. Chang and K. Kang, {\it Local regularity near boundary for the Stokes and Navier-Stokes equations},  to appear in Siam J. Math. Anal.

\bibitem{CKca} T. Chang and K. Kang, {\it On Caccioppoli's inequalities of Stokes equations and Navier-Stokes equations near boundary},  J. Differential Equations, {\bf 269} no. 9, 6732–6757 (2020).

\bibitem{Gau} Gautschi, {\it W. Some Elementary Inequalities Relating to the Gamma and Incomplete Gamma Function}. Journal of Mathematics and Physics, 38(1-4), 77–81. (1959)

\bibitem{Gol60}K. K. Golovkin, {\it Potential theory for the non-stationary linear Navier-Stokes equations in the case of three space variables},  Trudy Mat. Inst. Steklov., 59:87-99 (1960)

\bibitem{Jin2013} B.-J. Jin, {\it On the {C}accioppoli inequality of the unsteady {S}tokes
 system},  Int. J. Numer. Anal. Model. Ser. B, {\bf 4} no. 3, 215-223 (2013).



\bibitem{Kang05} K. Kang, {\it Unbounded normal derivative for the Stokes system near
              boundary},  Math. Ann., {\bf 331} no. 1, 87--109 (2005).

\bibitem{KangTsai22}
K. Kang, B. Lai, C.-C. Lai, and T.-P. Tsai, {\it The Green tensor of the nonstationary
Stokes system in the half-space},
Comm. Math. Phys. {\bf 399} no. 2, 1291–1372,  (2023).

\bibitem{KangTsai22FE}
 K. Kang, B. Lai, C.-C. Lai, T.-P. Tsai, {\it Finite energy Navier-Stokes flows with unbounded gradients induced by localized flux in the half-space}, Trans. Amer. Math. Soc., {\bf 375}  no. 9, 6701–6746 (2022).

\bibitem{Seregin00} G. A. Seregin,  {\it Some estimates near the boundary for solutions to the
 non-stationary linearized {N}avier-{S}tokes equations}, Zap. Nauchn. Sem. S.-Peterburg. Otdel. Mat. Inst. Steklov. (POMI) 271 (2000).


\bibitem{Seregin-Sverak10} G. A. Seregin and V. \u{S}ver\'ak, {\it On a bounded shear flow in half-space}, Zap. Nauchn. Sem. S.-Peterburg. Otdel. Mat. Inst. Steklov. (POMI) 385 (2010).


\bibitem{Sol68}
V. A. Solonnikov, {\it  Estimates for solutions of a non-stationary linearized system of
Navier-Stokes equations}, Trudy Mat. Inst. Steklov., 70:213-317, 1964. In Russian;
English translation in A.M.S. Translations, Series II 75:1-117, 1968.

\bibitem{Sol153}
V. A. Solonnikov, {\it Estimates of the solutions of the nonstationary Navier-Stokes system}, (Russian) Boundary value problems of
mathematical physics and related questions in the theory of functions, 7. Zap. Naucn. Sem. Leningrad. Otdel. Mat. Inst. Steklov.
(LoMI) 38: 153-231. Translated in J. Soviet Math. 1977, 8: 47-529.

\bibitem{Wolf2015} J. Wolf, {\it On the local regularity of suitable weak solutions to the
              generalized {N}avier-{S}tokes equations},  Ann. Univ. Ferrara Sez. VII Sci. Mat., {\bf 61} no. 1, 149-171 (2015).




\end{thebibliography}
\end{document}